\newtheorem{theorem}{Theorem}[section]
\newtheorem{lemma}[theorem]{Lemma}
\newtheorem{corollary}[theorem]{Corollary}
\theoremstyle{definition}
\newtheorem{example}[theorem]{Example}
\newtheorem{definition}[theorem]{Definition}
\newtheorem{remark}[theorem]{Remark}
\theoremstyle{remark}
\newtheorem*{claim}{Claim}
\newcommand{\conv}{\operatorname{conv}}
\DeclareRobustCommand{\orcidlink}[1]{%
  \href{https://orcid.org/#1}{\begingroup\normalfont%
    \raisebox{-\fontchardp\font`q}{%
      \includegraphics[height=\fontcharht\font`/+\fontchardp\font`q]{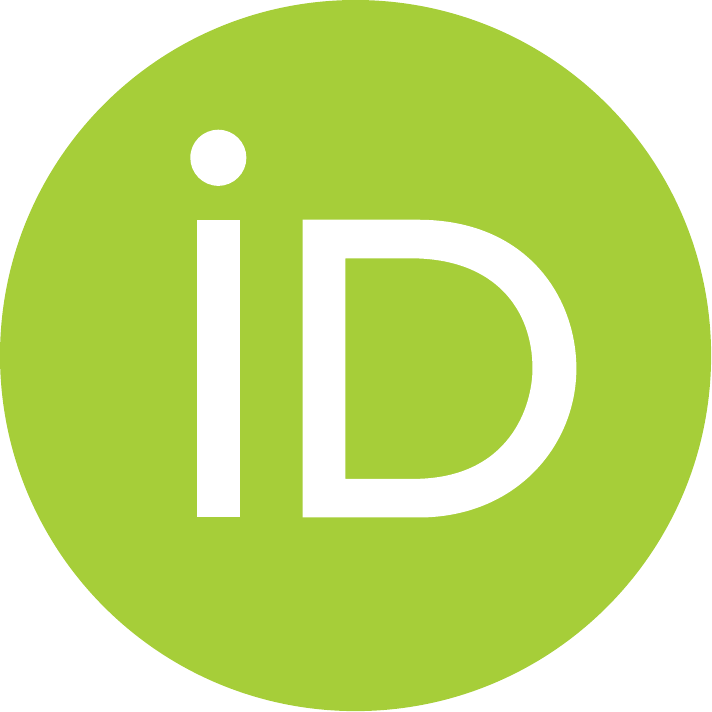}%
    }\endgroup%
    ~#1%
  }%
}
\newcommand{\orcid}[1]{\footnote{\orcidlink{#1}}}
\title{On the Split Closure of the Periodic Timetabling Polytope}
\author{%
\begin{tabular}{cc}
Niels Lindner\orcid{0000-0002-8337-4387} & Berenike Masing\orcid{0000-0001-7201-2412}\\
Freie Universität Berlin & Zuse Institute Berlin\\
\texttt{lindner@zib.de} & \texttt{masing@zib.de}
\end{tabular}%
}
\date{\today}
\begin{document}

\maketitle

\begin{abstract}
The Periodic Event Scheduling Problem (PESP) is the central mathematical tool for periodic timetable optimization in public transport. PESP can be formulated in several ways as a mixed-integer linear program with typically general integer variables.
We investigate the split closure of these formulations and show that split inequalities are identical with the recently introduced flip inequalities. While split inequalities are a general mixed-integer programming technique, flip inequalities are defined in purely combinatorial terms, namely cycles and arc sets of the digraph underlying the PESP instance. It is known that flip inequalities can be separated in pseudo-polynomial time. We prove that this is best possible unless P $=$ NP, but also observe that the complexity becomes linear-time if the cycle defining the flip inequality is fixed.
Moreover, introducing mixed-integer-compatible maps, we compare the split closures of different formulations, and show that reformulation or binarization by subdivision do not lead to stronger split closures. Finally, we estimate computationally how much of the optimality gap of the instances of the benchmark library PESPlib can be closed exclusively by split cuts, and provide better dual bounds for five instances.
\end{abstract}

\paragraph{Keywords:} Periodic Event Scheduling Problem, Periodic Timetabling, Split Closure, Mixed-Integer Programming

\paragraph{Mathematics Subject Classification (MSC2020):} 90C11, 90C35, 90B35, 90B20

\section{Introduction}
\label{sec:intro}
The timetable is the core of a public transportation system. It serves as a basis for cost-sensitive tasks such as vehicle and crew scheduling, and is required for accurate planning of passenger routes. A high-quality timetable is thus of utmost importance for a well-planned transportation system. Particularly in the context of urban traffic, a large number of transportation networks are operated with a periodic pattern, creating the demand to optimize periodic timetables.  
The standard mathematical model for this task is the \emph{Periodic Event Scheduling Problem} (PESP) introduced by \cite{serafini_mathematical_1989}. PESP is a combinatorial optimization problem on a digraph with respect to a certain period time, and it is notoriously hard: Deciding whether a feasible periodic timetable exists is NP-complete for any fixed period time $T \geq 3$ \citep{odijk_construction_1994}. The feasibility problem remains NP-hard on graphs with bounded treewidth \citep{lindner_parameterized_2022}. The difficulty of PESP is also reflected in the fact that since its establishment in 2012, none of the instances of the benchmark library PESPlib could be solved to proven optimality up to date \citep{PESPlib}. Nevertheless, many primal heuristics have been developed \citep{nachtigall_solving_2008,grosmann_solving_2012,patzold_matching_2016,borndorfer_concurrent_2020,lindner_timetable_2022,goerigk_improved_2017}, and there are success stories concerning the implementation of mathematically optimized timetables in practice \citep{liebchen_first_2008,kroon_new_2009}.

PESP can be formulated as a mixed-integer linear program (MIP) in a multitude of ways \citep{liebchen_periodic_2006}. Several studies of the \emph{periodic timetabling polytope} have been conducted, leading to the discovery of families of cutting planes, such as, e.g., \emph{cycle inequalities} \citep{odijk_construction_1994}, \emph{change-cycle inequalities} \citep{nachtigall_cutting_1996}, and more recently, \emph{flip inequalities}\citep{lindner_determining_2020}. The separation of cycle and change-cycle inequalities is known to be NP-hard \citep{borndorfer_separation_2020}, and flip inequalities are a superset of both cycle and change-cycle inequalities \citep{lindner_determining_2020}.
A common theme that cycle, change-cycle and flip inequalities share as well with other families of cutting planes \citep{nachtigall_cutting_1996,lindner_train_2000} is that they are all described in purely combinatorial terms. For example, flip inequalities are determined by a cycle and a set of arcs of the underlying digraph of the PESP instance. 

In this paper, we pursue a somewhat opposite strategy: Rather than starting with a combinatorial analysis, we investigate \emph{split inequalities}, a general-purpose tool for treating MIPs introduced by \cite{cook_chvatal_1990} as an analogon to the Chvátal closure for pure integer programs. The \emph{split closure} given by these inequalities has several nice properties: It is a polyhedron \citep{cook_chvatal_1990,conforti_polyhedral_2010}, coincides with the closure given by mixed-integer rounding and Gomory mixed-integer cuts \citep{nemhauser_recursive_1990,cornuejols_elementary_2001}, and leads to finite cutting plane algorithms for binary MIPs \citep{balas_lift-and-project_1993}. 

While the second Chvátal closure for a pure IP formulation has already been investigated by \cite{liebchen_second_2008}, we apply split closure techniques to proper mixed-integer formulations of PESP. Our first result is the following correspondence (\Cref{thm:split-is-flip}): Every non-trivial split inequality is a non-trivial flip inequality, and vice versa. The split closure of the periodic timetabling polytope is therefore identical with the closure given by the flip inequalities. Moreover, the split inequalities coming from split disjunctions, where one of the two sides of the split is empty, coincide with Odijk's cycle inequalities (\Cref{thm:oneside}).

In general, the separation of split inequalities is NP-hard \citep{caprara_separation_2003}. In the periodic timetabling situation, we show in \Cref{thm:sepa-hardness} that it is weakly NP-hard to separate maximally violated split/flip inequalities. This is best possible unless P $=$ NP, as \cite{lindner_determining_2020} have already outlined a pseudo-polynomial-time algorithm. The separation problem can however by solved by a parametric IP in the spirit of \cite{balas_optimizing_2008} and \cite{bonami_optimizing_2012}, which in the special case of PESP boils down to a sequence of $\lfloor T/2 \rfloor - 1$ standard IPs (\Cref{thm:sepa-parametric-mip}). In the event that the cycle defining a flip inequality is fixed, the separation becomes linear-time (\Cref{thm:sepa-fixed-cycle}).

So far, the results on the split closure of the periodic timetabling polytope apply for the cycle-based MIP formulation of PESP \citep{nachtigall_cutting_1996,liebchen_integral_2009}. Another popular formulation is the incidence-based formulation that is straightforward from the original problem definition by \cite{serafini_mathematical_1989}. In order to compare the split closures of two different MIP formulations, we introduce \emph{mixed-integer-compatible maps}, i.e., affine maps that map mixed-integer points to mixed-integer points. These maps have the general property that they map split closures into split closures (\Cref{thm:split-descendance}). For PESP, the polytope defined by the cycle-based formulation turns out to be a mixed-integer-compatible projection of the polytope defined by the incidence-based formulation. However, we show that the restriction of this projection to split closures is surjective, so that there is no gain in information concerning split cuts when switching to a different formulation (\Cref{thm:free-augmentation}). More results that can be proven using mixed-integer-compatible maps are the following: The split closure commutes with Cartesian products (\Cref{thm:split-product}). This enables us to show that the split closure of PESP instances on cactus graphs is exact (\Cref{thm:split-block}).

The behavior of split or lift-and-project closures with respect to binarizations, i.e., MIP reformulations with only binary integer variables, have received some attention lately \citep{dash_binary_2018,aprile_binary_2021}. In the context of PESP, the incidence-based MIP formulation can be binarized in a combinatorial manner by subdivision of arcs. Although split closures of binary MIPs are known to be much better behaved \citep{balas_lift-and-project_1993}, we prove by another application of mixed-integer-compatible maps that this binarization procedure does also not lead to stronger split closures (\Cref{thm:binarization}).

Finally, we evaluate split closures in practice. To this end, we consider the 22 PESPlib instances and some derived subinstances. We devise an algorithmic procedure to optimize over the split closure making use of our theoretical insights. Our separation algorithm consists of a heuristic and an exact part. The outcome is that although the split closure closes a significant part of the primal-dual gap, it is almost never exact. However, our separation method produces incumbent dual bounds for 5 of the PESPlib instances.

The paper is structured as follows: We summarize the relevant definitions and notions for PESP in \Cref{sec:pesp}. The correspondence between split and flip inequalities follows in \Cref{sec:split}. The subsequent \Cref{sec:sepa} is devoted to separation of split/flip inequalities. Mixed-integer compatible maps and the results on comparing split closures of different formulations are presented in \Cref{sec:comparing}. Our computational results can be found in \Cref{sec:experiments}. We conclude the paper in \Cref{sec:conclusion}.

\section{Periodic Event Scheduling}
\label{sec:pesp}
The Periodic Event Scheduling Problem has originally been introduced by \cite{serafini_mathematical_1989}, and has gained much attention ever since. In this chapter, we establish the basics, formally state the problem, introduce two equivalent model formulations and introduce our main object of interest, the periodic timetabling polytope. 

\subsection{Problem Definition}
\label{ss:problem-definition}

An instance of the \emph{Periodic Event Scheduling Problem} (PESP) is given by a 5-tuple $(G, T, \ell, u, w)$, where
\begin{itemize}
    \item $G = (V, A)$ is a directed graph,
    \item $T \in \mathbb N$, $T \geq 2$, is a \emph{period time},
    \item $\ell \in \mathbb Z^A$ is a vector of \emph{lower bounds},
    \item $u \in \mathbb Z^A$ is a vector of \emph{upper bounds},
    \item $w \in \mathbb R^A$ is a vector of \emph{weights}.
\end{itemize}
A \emph{periodic tension} is a vector $x \in \mathbb R^A$ with $\ell \leq x \leq u$ such that 
\begin{equation}
    \label{eq:pix}
    \exists\, \pi \in [0, T)^V: \quad \forall a = (i, j) \in A:\quad  x_a \equiv \pi_j - \pi_i  \mod T.
\end{equation}

In this case, the vector $\pi$ is called a \emph{periodic timetable}. In the context of periodic timetabling in public transport, the vertices of $G$ typically correspond to arrival or departure \emph{events} of vehicles at some station. The arcs of $G$ are \emph{activities}; they model relations between the events such as, e.g., driving between two stations, dwelling at a station, or passenger transfers \citep{liebchen_modeling_2007}. A periodic timetable $\pi$ thus assigns timings in $[0, T)$ to each event, repeating periodically with period $T$. The periodic tension $x$ collects the activity durations, which are supposed to lie within the feasible interval $[\ell, u]$. A typical source for the weight of an arc is the estimated number of passengers using the corresponding activity. A reasonable quality indicator of a periodic timetable is hence $w^\top x$, the total travel time of all passengers.

\begin{definition}[\citealt{serafini_mathematical_1989}]\label{def:pesp}
Given $(G, T, \ell, u, w)$ as above, the \emph{Periodic Event Scheduling Problem} is to find a periodic tension $x$ such that $w^\top x$ is minimum, or to decide that none exists.
\end{definition}

\begin{example}
\Cref{fig:pesp-example} shows a small PESP instance together with an optimal periodic tension and a compatible periodic timetable.

\begin{figure}[htbp]
    \centering
    \def\circledarrow#1#2#3{ 
	\draw[#1,<-] (#2) +(60:#3) arc(60:-260:#3);
}

\begin{tikzpicture}[scale=1]
	\definecolor{highlight}{HTML}{4eed98}
	\definecolor{mint}{HTML}{14b861}
	\definecolor{darkblue}{HTML}{003399}
	\tikzstyle{a} = [line width=1.3, ->]
	\tikzstyle{am} = [line width=6, highlight]
	\tikzstyle{t} = [midway, font=\footnotesize]
	\tikzstyle{ta} = [t, above]
	\tikzstyle{tb} = [t, below, blue]
	\tikzstyle{tr} = [t, right]
	\tikzstyle{tl} = [t, left]
	\tikzstyle{v} = [draw, circle, inner sep=4, fill = gray!20, outer sep = 2]
	\tikzstyle{c} = [gray]
	\tikzmath{\h = 4; \v = 2;};
	\node[v] (A1) at (0,0) {0};
	\node[v] (A2) at ($(1*\h,0)$) {1};
	\node[v] (A3) at ($(2*\h,0)$) {4};
	\node[v] (A4) at ($(3*\h,0)$) {5};
	
	\node[v] (B1) at ($(0*\h,\v)$) {9};
	\node[v] (B2) at ($(1*\h,\v)$) {8};
	\node[v] (B3) at ($(2*\h,\v)$) {5};
	\node[v] (B4) at ($(3*\h,\v)$) {4};
	
	\draw[a] (A1) -- node[ta] {$[1,2],11$}  node[tb] {$1$} (A2);
	\draw[a] (A2) -- node[ta] {$[3,6],11$} node[tb] {$3$} (A3);
	\draw[a] (A3) -- node[ta] {$[1,2],11$} node[tb] {$1$} (A4);
	
	\draw[a] (B4) -- node[ta] {$[1,2],11$} node[tb] {$1$} (B3);
	\draw[a] (B3) -- node[ta] {$[3,6],11$} node[tb] {$3$} (B2);
	\draw[a] (B2) -- node[ta] {$[1,2],11$} node[tb] {$1$} (B1);
	
	\draw[a] (A4) to[bend right] node[ta, rotate = -90] {$[1,10],10$} node[tb, rotate = -90] {$9$} (B4);
	\draw[a] (B1)  to[bend right] node[ta, rotate=90] {$[1,10],10$} node[tb, rotate = 90] {$1$} (A1);
	\draw[a] (B2) -- node[ta, rotate = 90] {$[3,12],0$}  node[tb, rotate = 90] {$3$}  (A2);
	\draw[a] (A3) -- node[ta, rotate = -90] {$[3,12],0$} node[tb, rotate = -90] {$11$} (B3);

\end{tikzpicture}
    \caption{A PESP instance on a digraph $G = (V, A)$ with $T=10$. The upper label of an arc $a \in A$ is $[\ell_a, u_a], w_a$. The blue lower arc labels indicate a periodic tension $x$ compatible with the periodic timetable $\pi$ as given by the vertex labels.}
    \label{fig:pesp-example}
\end{figure}

\end{example}

\begin{remark}
\label{rem:preprocessing}
As described, e.g., by \cite{liebchen_periodic_2006}, any PESP instance can be preprocessed in such a way that $G$ contains no loops and is weakly connected, $0 \leq \ell < T$ and $\ell \leq u < \ell + T$.
\end{remark}


\subsection{Mixed-Integer Programming Formulations}
\label{subsec:mip}

PESP can be formulated as a mixed-integer linear program in several ways \citep{liebchen_periodic_2006}. The \emph{incidence-based} model is a straightforward interpretation of the problem definition, introducing auxiliary integer \emph{periodic offsets} to resolve the modulo constraints \eqref{eq:pix}:

\begin{equation}
\label{eq:mip-incidence}
    \begin{aligned}
    & \text{Minimize} & w^\top x \\
    & \text{s.t.} & x_a &= \pi_j - \pi_i + T p_a &\quad a = (i,j) \in A\\
    & & \ell_{a} &\leq x_{a} \leq u_{a}, &\quad a \in A\\
    & & 0 &\leq \pi_i \leq T-1, &\quad i \in V,\\
    & & p_{a} &\;\text{integer}, &\quad a \in A.
    \end{aligned}
\end{equation}

When all periodic offsets $p_a$ are fixed, \eqref{eq:mip-incidence} becomes a linear program with a totally unimodular constraint matrix. It is hence no restriction to assume that $x$ and $\pi$ are integral, so that the bound $\pi < T$ in \eqref{eq:pix} can safely be replaced with $\pi \leq T-1$. For the purpose of this paper, we will however not treat \eqref{eq:mip-incidence} as a pure integer program, as was done by \cite{liebchen_second_2008}. We will instead investigate proper mixed-integer formulations, where the periodic tension variables $x$ and the periodic timetable variables $\pi$ are considered as continuous variables.

An alternative MIP formulation for PESP is the \emph{cycle-based} formulation, which has been reported to be computationally beneficial (see., e.g., \citealt{peeters_cyclic_2003,liebchen_first_2008,liebchen_integral_2009,borndorfer_concurrent_2020,schiewe_periodic_2020}):

\begin{equation}
\label{eq:mip-cycle}
    \begin{aligned}
    & \text{Minimize} & w^\top x \\
    & \text{s.t.} & \Gamma x &= T z,\\
    & & \ell &\leq x\leq u,\\
    & & z &\;\text{integer}.
    \end{aligned}
\end{equation}

In \eqref{eq:mip-cycle}, $x$ represents a periodic tension, and $z$ is an integral \emph{cycle offset}. A periodic timetable $\pi$ can be recovered from $x$ by a graph traversal.

To explain the further ingredients of the formulation \eqref{eq:mip-cycle}, we will require more definitions about cycles, cycle spaces and cycle bases, see \cite{kavitha_cycle_2009} for an overview. The \emph{cycle space} $\mathcal C$ of $G$ is the abelian group
$$\mathcal C \coloneqq \left\{ \gamma \in \mathbb Z^A \,\middle|\, \forall i \in V: \sum_{a \in \delta^+(i)} \gamma_a = \sum_{a \in \delta^-(i)} \gamma_a \right\}.$$
In terms of linear algebra, $\mathcal C$ is the kernel over the integers of the incidence matrix of $G$; in the language of network flows, $\mathcal C$ is the space of all integer-valued (and arbitrarily signed) circulations in $G$. The rank of $\mathcal C$ is the \emph{cyclomatic number} $\mu$ of $G$. We assume that $G$ is weakly connected (\Cref{rem:preprocessing}), so that $\mu = |A| - |V| + 1$.

A vector $\gamma \in \mathcal C \cap \{-1,0,1\}^A$ will be called an \emph{oriented cycle}. When ignoring arc directions, the support $\{a \in A \mid \gamma_a \neq 0\}$ makes up a possibly non-simple cycle in $G$. We call arcs $a$ with $\gamma_a > 0$ \emph{forward} and those with $\gamma_a < 0$ \emph{backward}. Any $\gamma \in \mathcal C$ can be decomposed into its positive resp.\ negative part $\gamma_+$ resp.\ $\gamma_-$, i.e., $\gamma_+ \coloneqq \max(\gamma, 0)$ and $\gamma_- \coloneqq \max(-\gamma, 0)$. The \emph{length} of an oriented cycle $\gamma$ is $|\gamma| \coloneqq |\{a \in A \mid \gamma_a \neq 0\}|$.

A set $B$ of $\mu$ oriented cycles is called an \emph{integral cycle basis} of $G$ if $B$ is a basis for $\mathcal C$ as an abelian group, i.e., if every element of the cycle space $\mathcal C$ can be written as a unique integral linear combination of the oriented cycles in $B$. A particular class of integral cycle bases are the \emph{(strictly) fundamental cycle bases}: Let $\mathcal T$ be some spanning tree of $G$. Then the fundamental cycle induced by the co-tree arc $a$ of $\mathcal T$ is the unique cycle $\gamma$ obtained by adding $a$ to $\mathcal T$ with the convention that $\gamma_a = 1$. A fundamental cycle basis is then given by the collection of $\mu$ fundamental cycles of $\mathcal T$. 
Arranging the oriented cycles of an integral cycle basis $B$ as rows of a matrix, we obtain a \emph{cycle matrix} $\Gamma \in \{-1,0,1\}^{B \times A}$.

\begin{example}
In the example from \Cref{fig:pesp-example}, we have $\mu = 3$. An integral cycle basis $B$ is outlined in \Cref{fig:pesp-cycle-basis}.

\begin{figure}[htbp]
    \centering
    \def\circledarrow#1#2#3{ 
	\draw[#1,<-] (#2) +(60:#3) arc(60:-260:#3);
}

\begin{tikzpicture}[scale=1]
	\definecolor{highlight}{HTML}{4eed98}
	\definecolor{mint}{HTML}{14b861}
	\definecolor{darkblue}{HTML}{003399}
	\tikzstyle{a} = [line width=1.3, ->]
	\tikzstyle{am} = [line width=6, highlight]
	\tikzstyle{t} = [midway, font=\footnotesize]
	\tikzstyle{ta} = [t, above]
	\tikzstyle{tb} = [t, below, blue]
	\tikzstyle{tr} = [t, right]
	\tikzstyle{tl} = [t, left]
	\tikzstyle{v} = [draw, circle, inner sep=4, fill = gray!20, outer sep = 2]
	\tikzstyle{c} = [gray]
	\tikzmath{\h = 4; \v = 2;};
	\node[v] (A1) at (0,0) {0};
	\node[v] (A2) at ($(1*\h,0)$) {1};
	\node[v] (A3) at ($(2*\h,0)$) {4};
	\node[v] (A4) at ($(3*\h,0)$) {5};
	
	\node[v] (B1) at ($(0*\h,\v)$) {9};
	\node[v] (B2) at ($(1*\h,\v)$) {8};
	\node[v] (B3) at ($(2*\h,\v)$) {5};
	\node[v] (B4) at ($(3*\h,\v)$) {4};
	
	\draw[a] (A1) -- node[ta] {$[1,2],11$}  node[tb] {$1$} (A2);
	\draw[a] (A2) -- node[ta] {$[3,6],11$} node[tb] {$3$} (A3);
	\draw[a] (A3) -- node[ta] {$[1,2],11$} node[tb] {$1$} (A4);
	
	\draw[a] (B4) -- node[ta] {$[1,2],11$} node[tb] {$1$} (B3);
	\draw[a] (B3) -- node[ta] {$[3,6],11$} node[tb] {$3$} (B2);
	\draw[a] (B2) -- node[ta] {$[1,2],11$} node[tb] {$1$} (B1);
	
	\draw[a] (A4) to[bend right] node[ta, rotate = -90] {$[1,10],10$} node[tb, rotate = -90] {$9$} (B4);
	\draw[a] (B1)  to[bend right] node[ta, rotate=90] {$[1,10],10$} node[tb, rotate = 90] {$1$} (A1);
	\draw[a] (B2) -- node[ta, rotate = 90] {$[3,12],0$}  node[tb, rotate = 90] {$3$}  (A2);
	\draw[a] (A3) -- node[ta, rotate = -90] {$[3,12],0$} node[tb, rotate = -90] {$11$} (B3);
	
	\node[c] (gamma1) at ($(0.5*\h,0.5*\v)$) {$\gamma_1$};
	\circledarrow{ gray}{gamma1}{0.5cm};
	\node[c] (gamma2) at ($(1.5*\h,0.5*\v)$) {$\gamma_2$};
	\circledarrow{ gray}{gamma2}{0.5cm};
	\node[c] (gamma3) at ($(2.5*\h,0.5*\v)$) {$\gamma_3$};
	\circledarrow{ gray}{gamma3}{0.5cm};
	\begin{pgfonlayer}{bg}
	
	\draw[am] (A1) --  (A2);
	\draw[am] (A2) -- (A3);
	\draw[am] (A3) --  (A4);
	\draw[am] (A2) --  (B2);
	\draw[am] (A3) --  (B3);
	\draw[am] (A4) to[bend right]  (B4);
	\draw[am] (B1)  to[bend right] (A1);
	
\end{pgfonlayer}
	
\end{tikzpicture}
    \caption{In the instance from \Cref{fig:pesp-example}, the oriented cycles $\gamma_1, \gamma_2, \gamma_3$ constitute an integral cycle basis, as they are the fundamental cycles of the highlighted spanning tree. The cycle $\gamma_2$ uses only forward arcs, while $\gamma_1$ and $\gamma_3$ have both forward and backward arcs. The tension $\gamma_3^\top x$ along $\gamma_3$ is $1 + 1 + 9 - 1 = 10 \equiv 0 \bmod 10$, and $\gamma_1^\top x$ and $\gamma_2^\top x$ are integer multiples of $T = 10$ as well.}
    \label{fig:pesp-cycle-basis}
\end{figure}

\end{example}

The following theorem shows that the MIP \eqref{eq:mip-cycle} is indeed a valid formulation of PESP.

\begin{theorem}[Cycle periodicity property, \citealp{liebchen_integral_2009}]
\label{thm:cycle-periodicity}
For a vector $x \in \mathbb R^A$, the following are equivalent:
\begin{enumerate}[\normalfont (a)]
    \item $x$ satisfies condition \eqref{eq:pix},
    \item $\gamma^\top x \equiv 0 \bmod T$ for all $\gamma \in \mathcal C$,
    \item $\Gamma x \equiv 0 \bmod T$ for the cycle matrix $\Gamma$ of an integral cycle basis of $G$.
\end{enumerate}
\end{theorem}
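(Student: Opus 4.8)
The plan is to prove the cycle of implications (a)$\Rightarrow$(b)$\Rightarrow$(c)$\Rightarrow$(b)$\Rightarrow$(a). The equivalence (b)$\Leftrightarrow$(c) carries essentially no content: the rows of $\Gamma$ are oriented cycles, hence elements of $\mathcal C$, which gives (b)$\Rightarrow$(c) at once; conversely, if $B$ is an integral cycle basis, then every $\gamma \in \mathcal C$ is an \emph{integral} linear combination of the rows of $\Gamma$, so $\Gamma x \equiv 0 \bmod T$ forces $\gamma^\top x \equiv 0 \bmod T$ by reducing that combination modulo $T$. The substance therefore lies in (a)$\Rightarrow$(b) and (b)$\Rightarrow$(a).

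For (a)$\Rightarrow$(b), I would take a periodic timetable $\pi$ as in \eqref{eq:pix}, fix $\gamma \in \mathcal C$, and compute modulo $T$: since $x_a \equiv \pi_j - \pi_i$ for every $a = (i,j)$, we get $\gamma^\top x \equiv \sum_{a=(i,j)\in A} \gamma_a(\pi_j - \pi_i) \bmod T$. Regrouping the right-hand side vertex by vertex, the coefficient of $\pi_v$ is $\sum_{a \in \delta^-(v)}\gamma_a - \sum_{a \in \delta^+(v)}\gamma_a$, which vanishes by the defining property of $\mathcal C$. Hence $\gamma^\top x \equiv 0 \bmod T$.

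For (b)$\Rightarrow$(a), which is the heart of the statement, I would construct $\pi$ explicitly from a spanning tree. By \Cref{rem:preprocessing} we may assume $G$ is weakly connected, so it has a spanning tree $\mathcal T$; root it at a vertex $r$. For each $v \in V$ let $\tilde\pi_v$ be the signed sum of the entries $x_a$ along the unique $r$--$v$ path in $\mathcal T$, counting an arc positively if it is traversed in its own direction and negatively otherwise. Then $x_a = \tilde\pi_j - \tilde\pi_i$ holds \emph{with equality} for every tree arc $a=(i,j)$. For a co-tree arc $a = (i,j)$, the fundamental cycle $\gamma^{(a)}$ (normalized so that $\gamma^{(a)}_a = 1$) consists of $a$ together with the $j$--$i$ tree path, so $\gamma^{(a)\top} x = x_a - (\tilde\pi_j - \tilde\pi_i)$; assumption (b) applied to $\gamma^{(a)} \in \mathcal C$ yields $x_a \equiv \tilde\pi_j - \tilde\pi_i \bmod T$. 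Thus $x_a \equiv \tilde\pi_j - \tilde\pi_i \bmod T$ for all $a \in A$. Finally set $\pi_v \coloneqq \tilde\pi_v - T\lfloor \tilde\pi_v/T\rfloor \in [0,T)$; since this changes each potential only by an integer multiple of $T$, the congruences are preserved, and $\pi$ is a periodic timetable certifying (a).

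I expect (b)$\Rightarrow$(a) to be the only real obstacle, and even there the difficulty is mostly bookkeeping: lining up the sign conventions for the fundamental cycle versus the tree-path potential, and observing that nothing forces $x$ (hence $\tilde\pi$) to be integral, so that reducing modulo $T$ at the end is harmless precisely because it shifts potentials by multiples of $T$.
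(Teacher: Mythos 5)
The paper does not prove \Cref{thm:cycle-periodicity} at all; it is cited from Liebchen and Rizzi (2009) as a known result, so there is no in-paper proof to compare against. Your proof is correct and self-contained, following the standard argument: (a)$\Rightarrow$(b) by regrouping $\sum_a \gamma_a(\pi_j - \pi_i)$ vertex by vertex and invoking the circulation condition defining $\mathcal C$; (b)$\Leftrightarrow$(c) since the rows of $\Gamma$ lie in $\mathcal C$ and, conversely, every $\gamma\in\mathcal C$ is an \emph{integral} combination of them (this is exactly where integrality of the cycle basis is needed, and you use it correctly); (b)$\Rightarrow$(a) via the spanning-tree potential $\tilde\pi$ and the fundamental cycles, where the sign bookkeeping $\gamma^{(a)\top}x = x_a - (\tilde\pi_j - \tilde\pi_i)$ checks out, and reducing $\tilde\pi$ modulo $T$ to land in $[0,T)^V$ preserves the congruences. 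One cosmetic remark: for (b)$\Rightarrow$(a) you only use the congruence for the fundamental cycles $\gamma^{(a)}$, i.e., you in effect prove (c)$\Rightarrow$(a) for a fundamental cycle basis, which is slightly cleaner than invoking all of (b); but this is a matter of presentation, not correctness.
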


In the sequel, we will focus on the cycle-based formulation \eqref{eq:mip-cycle}, which is justified by the following remark.

\begin{remark}
    \label{rem:incidence-is-cycle}
    The incidence-based formulation \eqref{eq:mip-incidence} is a particular incarnation of the cycle-based formulation \eqref{eq:mip-cycle} in the following sense:
    Let $I = (G, T, \ell, u, w)$ be a PESP instance. We can augment $I$ to an instance $I'$ such that the incidence-based MIP formulation \eqref{eq:mip-incidence} for $I$ coincides with the cycle-based MIP formulation \eqref{eq:mip-cycle} for $I'$ for a certain integral cycle basis $B$ with cycle matrix $\Gamma$. To this end, we add a new vertex $s$ and connect it to every original vertex $i \in V$. Set $\ell_{si} \coloneqq 0, u_{si} \coloneqq T-1, w_{si} \coloneqq 0$. The subgraph $\mathcal T$ on the arcs $\{(s,i) \mid i \in V\}$ is a spanning tree of the augmented graph. Each fundamental cycle has the vertex sequence $(s, i, j, s)$ for some arc $a = (i,j) \in A$; we assume that the arcs $(s,i)$ and $(i,j)$ are forward, and that the arc $(s,j)$ is backward. The constraint in \eqref{eq:mip-cycle} for the cycle $(s, i, j, s)$ is then given by $x_{si} + x_{a} - x_{sj} = T z_{a}$. Relabeling $x_{si}$ as $\pi_i$ for $i \in V$ and $z_a$ as $p_a$ for $a \in A$, the formulation \eqref{eq:mip-cycle} for the augmented instance and the cycle matrix $\Gamma$ given by the fundamental cycle basis with respect to $\mathcal T$ indeed turns out to be the same as the formulation \eqref{eq:mip-incidence} for the original instance $I$. In particular, the PESP instances $I$ and $I'$ can be considered equivalent.
\end{remark}

\begin{example}
\Cref{fig:pesp-augmentation} shows the augmented instance $I'$ obtained from the instance $I$ from \Cref{fig:pesp-example} according to \Cref{rem:incidence-is-cycle}.

\begin{figure}[htbp]
    \centering
    \def\circledarrow#1#2#3{ 
	\draw[#1,<-] (#2) +(60:#3) arc(60:-260:#3);
}

\begin{tikzpicture}[scale=1]
	\definecolor{highlight}{HTML}{4eed98}
	\definecolor{mint}{HTML}{14b861}
	\definecolor{darkblue}{HTML}{003399}
	\tikzstyle{a} = [line width=1.3, ->]
	\tikzstyle{ag} = [line width=1.3, ->, highlight]
	\tikzstyle{am} = [line width=6, highlight]
	\tikzstyle{t} = [midway, font=\footnotesize]
	\tikzstyle{ta} = [t, above]
	\tikzstyle{tb} = [t, below, blue]
	\tikzstyle{tr} = [t, right]
	\tikzstyle{tl} = [t, left]
	\tikzstyle{v} = [draw, circle, inner sep=4, fill = gray!20, outer sep = 2]
	\tikzstyle{c} = [gray]
	\tikzmath{\h = 4; \v = 2;};
	
	\node[v, fill=highlight] (S) at ($(1.5*\h,2*\v)$) {0};
	
	\node[v] (A1) at (0,0) {0};
	\node[v] (A2) at ($(1*\h,0)$) {1};
	\node[v] (A3) at ($(2*\h,0)$) {4};
	\node[v] (A4) at ($(3*\h,0)$) {5};
	
	\node[v] (B1) at ($(0*\h,\v)$) {9};
	\node[v] (B2) at ($(1*\h,\v)$) {8};
	\node[v] (B3) at ($(2*\h,\v)$) {5};
	\node[v] (B4) at ($(3*\h,\v)$) {4};
	
	\draw[a] (A1) -- node[ta] {$[1,2],11$}  node[tb] {$1$} (A2);
	\draw[a] (A2) -- node[ta] {$[3,6],11$} node[tb] {$3$} (A3);
	\draw[a] (A3) -- node[ta] {$[1,2],11$} node[tb] {$1$} (A4);
	
	\draw[a] (B4) -- node[ta] {$[1,2],11$} node[tb] {$1$} (B3);
	\draw[a] (B3) -- node[ta] {$[3,6],11$} node[tb] {$3$} (B2);
	\draw[a] (B2) -- node[ta] {$[1,2],11$} node[tb] {$1$} (B1);
	
	\draw[a] (A4) to[bend right] node[ta, rotate = -90] {$[1,10],10$} node[tb, rotate = -90] {$9$} (B4);
	\draw[a] (B1)  to[bend right] node[ta, rotate=90] {$[1,10],10$} node[tb, rotate = 90] {$1$} (A1);
	\draw[a] (B2) -- node[ta, rotate = 90] {$[3,12],0$}  node[tb, rotate = 90] {$3$}  (A2);
	\draw[a] (A3) -- node[ta, rotate = -90] {$[3,12],0$} node[tb, rotate = -90] {$11$} (B3);
	
	\begin{pgfonlayer}{bg}
	\draw[ag] (S) -- (A1);
	\draw[ag] (S) -- (A2);
	\draw[ag] (S) -- (A3);
	\draw[ag] (S) -- (A4);
	\draw[ag] (S) -- (B1);
	\draw[ag] (S) -- (B2);
	\draw[ag] (S) -- (B3);
	\draw[ag] (S) -- (B4);
	\end{pgfonlayer}
	
	
\end{tikzpicture}
    \caption{Augmentation of the instance in \Cref{fig:pesp-example} according to \Cref{rem:incidence-is-cycle}. The new vertex $s$ and the new arcs $(s, i)$ are highlighted in green. The highlighted arcs form a spanning tree of the augmented instance. The periodic tension $x_{si}$ of a highlighted arc $(s, i)$ can be read off the timetable value $\pi_i$ given as vertex label at the gray vertex $i$.}
    \label{fig:pesp-augmentation}
\end{figure}

\end{example}

\subsection{The Periodic Timetabling Polytope}
\label{ss:timetabling-polytope}
Before analyzing the split closure, we need to understand the geometric object behind the feasible region of a PESP instance, and also of its natural LP relaxation.

\begin{definition}
\label{def:polytopes}
For a PESP instance $(G, T, \ell, u, w)$ and a cycle matrix $\Gamma$ of an integral cycle basis $B$, define
\begin{align*}
    \mathcal P &\coloneqq \{ (x, z) \in \mathbb R^A \times \mathbb R^B \mid \Gamma x = Tz, \ell \leq x \leq u \},\\
    \mathcal P_\mathrm{I} &\coloneqq \conv \{ (x, z) \in \mathbb R^A \times \mathbb Z^B \mid \Gamma x = Tz, \ell \leq x \leq u \}.
\end{align*}
We will call $\mathcal P$ the \emph{fractional periodic timetabling polytope} and  $\mathcal P_{I}$ the \emph{integer periodic timetabling polytope}.
\end{definition}

$\mathcal P_\mathrm{I}$ is the convex hull of the feasible solutions to \eqref{eq:mip-cycle}, and the fractional periodic timetabling polytope $\mathcal P$ is the polyhedron associated to the natural linear programming relaxation of \eqref{eq:mip-cycle}. 
Observe that this relaxation is very weak: $\mathcal P$ is combinatorially equivalent to the hyperrectangle $\prod_{a \in A} [\ell_a, u_a]$, and an optimal vertex of the LP relaxation of \eqref{eq:mip-cycle} is given by $(\ell, \Gamma \ell/T)$.

\begin{remark}
\label{rem:basis-indep}
The choice of a cycle basis $\Gamma$ is not essential for the definition of $\mathcal P$ and $\mathcal P_\mathrm{I}$: If $\Gamma'$ is the cycle matrix of another integral cycle basis, then there is a unimodular matrix $U$ such that $\Gamma' = U \Gamma$, and $(x, z) \mapsto (x, Uz)$ is a $\mathbb Z$-linear isomorphism.
\end{remark}

Several classes of valid inequalities for $\mathcal P_\mathrm{I}$ are known \citep{odijk_construction_1994,nachtigall_cutting_1996,nachtigall_periodic_1998,lindner_train_2000,lindner_determining_2020}. We will focus on those that are defined in terms of elements of the cycle space $\mathcal C$. The cycle periodicity property (\Cref{thm:cycle-periodicity}) immediately shows:
\begin{theorem}[\citealp{odijk_construction_1994}]
\label{thm:odijk}
    Let $\gamma \in \mathcal C$. Then the following \emph{cycle inequality} holds for all $(x, z) \in \mathcal P_\mathrm{I}$:
    \begin{equation}
    \label{eq:odijk}
    \left\lceil \frac{\gamma_+^\top \ell - \gamma_-^\top u}{T} \right\rceil \leq \frac{\gamma^\top x}{T} \leq \left\lfloor \frac{\gamma_+^\top u - \gamma_-^\top \ell}{T} \right\rfloor.
    \end{equation}
\end{theorem}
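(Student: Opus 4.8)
The plan is to combine the cycle periodicity property with the box constraints $\ell \le x \le u$. First I would fix an integer feasible point, i.e., $(x,z) \in \mathbb R^A \times \mathbb Z^B$ with $\Gamma x = Tz$ and $\ell \le x \le u$. Since $z$ is integral, condition (c) of \Cref{thm:cycle-periodicity} holds, hence by the equivalence also condition (b): $\gamma^\top x \equiv 0 \bmod T$ for every $\gamma \in \mathcal C$. In particular $\gamma^\top x / T \in \mathbb Z$, and this holds for the given $\gamma$ regardless of whether it belongs to the chosen basis.

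Next I would bound $\gamma^\top x$ from both sides using $\ell \le x \le u$. Writing $\gamma = \gamma_+ - \gamma_-$ with $\gamma_+, \gamma_- \geq 0$ coordinatewise, we have $\gamma^\top x = \gamma_+^\top x - \gamma_-^\top x$, and since all coordinates of $\gamma_\pm$ are nonnegative, $\gamma_+^\top \ell \le \gamma_+^\top x \le \gamma_+^\top u$ and likewise $\gamma_-^\top \ell \le \gamma_-^\top x \le \gamma_-^\top u$. Combining these, $\gamma_+^\top \ell - \gamma_-^\top u \le \gamma^\top x \le \gamma_+^\top u - \gamma_-^\top \ell$, and dividing by $T > 0$ gives the same chain of inequalities for $\gamma^\top x / T$.

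Now the key step: because $\gamma^\top x / T$ is an integer sandwiched between the two rational constants $(\gamma_+^\top \ell - \gamma_-^\top u)/T$ and $(\gamma_+^\top u - \gamma_-^\top \ell)/T$, I may replace the lower constant by its ceiling and the upper constant by its floor, which yields exactly \eqref{eq:odijk} for this particular integer feasible point. Finally, since \eqref{eq:odijk} is a pair of linear inequalities in $(x,z)$ with constant left-/right-hand sides, validity at every integer feasible point implies validity on their convex hull $\mathcal P_\mathrm{I}$, which completes the argument.

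I do not expect a genuine obstacle: the statement is an immediate corollary of \Cref{thm:cycle-periodicity}. The only points deserving a modicum of care are (i) that cycle periodicity part (b) applies to all $\gamma \in \mathcal C$ and not merely to basis elements, so the particular cycle matrix $\Gamma$ plays no role, and (ii) that the floor/ceiling tightening is legitimate precisely because the integrality of $\gamma^\top x / T$ has been established first — performing the rounding before invoking integrality would only recover the weaker, unrounded bounds.
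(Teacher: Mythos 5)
Your proof is correct and takes exactly the approach the paper intends: the paper states that the cycle periodicity property (\Cref{thm:cycle-periodicity}) ``immediately shows'' the theorem, and your argument spells out that implication — integrality of $\gamma^\top x/T$ from part (b), the box-constraint bounds, the floor/ceiling tightening, and the extension from integer feasible points to their convex hull.
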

Since the rows of the cycle matrix $\Gamma$ are oriented cycles, \Cref{thm:odijk} implies bounds on the $z$-variables in \Cref{def:polytopes} as well, so that $\mathcal P$ and $\mathcal P_\mathrm{I}$ are indeed polytopes.

Let $[\cdot]_T$ denote the modulo $T$ operator with values in $[0, T)$. Another well-known class of inequalities is the following:

\begin{theorem}[\citealp{nachtigall_cutting_1996}]
\label{thm:change-cycle}
Let $\gamma \in \mathcal C$ and $\alpha_\gamma \coloneqq [-\gamma^\top \ell]_T$. Then the following \emph{change-cycle inequality} holds for all $(x, z) \in \mathcal P_\mathrm{I}$:
\begin{equation}
\label{eq:cc}
(T-\alpha_\gamma) \gamma_+^\top (x - \ell) + \alpha_\gamma \gamma_-^\top (x- \ell) \geq \alpha_\gamma(T-\alpha_\gamma).
\end{equation}
\end{theorem}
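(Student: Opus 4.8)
The plan is to reduce the inequality to a one-dimensional statement about a single integer. Fix $(x,z) \in \mathcal P_\mathrm{I}$; by convexity it suffices to verify the inequality at the integer points, i.e., we may assume $z \in \mathbb Z^B$ and $\Gamma x = Tz$, $\ell \le x \le u$. Set $\gamma \in \mathcal C$. By the cycle periodicity property (\Cref{thm:cycle-periodicity}) we have $\gamma^\top x \equiv 0 \bmod T$, so $\gamma^\top(x-\ell) \equiv -\gamma^\top \ell \equiv \alpha_\gamma \bmod T$. Now split the tension along forward and backward arcs: write $s_+ \coloneqq \gamma_+^\top(x-\ell) \ge 0$ and $s_- \coloneqq \gamma_-^\top(x-\ell) \ge 0$, so that $\gamma^\top(x-\ell) = s_+ - s_-$ and hence $s_+ - s_- \equiv \alpha_\gamma \bmod T$. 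This means $s_+ - s_- = \alpha_\gamma + kT$ for some integer $k$, or equivalently $s_+ + (T-\alpha_\gamma) = s_- + \alpha_\gamma + (k+1)T$.

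The core of the argument is then the following elementary claim: if $a, b \ge 0$ are real numbers with $a - b \equiv \alpha \bmod T$ for some $\alpha \in [0,T)$, then $(T-\alpha)a + \alpha b \ge \alpha(T-\alpha)$. To see this, let $a - b = \alpha + kT$ with $k \in \mathbb Z$. If $k \ge 0$, then $a \ge \alpha$ (since $b \ge 0$), so $(T-\alpha)a \ge (T-\alpha)\alpha$ and the $\alpha b \ge 0$ term only helps. If $k \le -1$, then $b = a - \alpha - kT \ge a - \alpha + T \ge T - \alpha$ (using $a \ge 0$ and $k \le -1$), so $\alpha b \ge \alpha(T-\alpha)$ and again $(T-\alpha)a \ge 0$ finishes it. Applying this claim with $a = s_+$, $b = s_-$, $\alpha = \alpha_\gamma$ yields exactly \eqref{eq:cc}.

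I would also record the boundary cases for completeness: if $\alpha_\gamma = 0$ the inequality reads $T\gamma_+^\top(x-\ell) \ge 0$, which is trivially true, so one may assume $\alpha_\gamma \in \{1,\dots,T-1\}$; this is harmless and mainly a matter of presentation. The main (and really only) obstacle is the small case split on the sign of $k$ in the elementary claim — it is routine but must be done carefully so that the non-negativity of $s_+$ and $s_-$ and the bounds $\ell \le x \le u$ are used in the right places; note that the upper bound $u$ is not actually needed here, only $x \ge \ell$. Everything else (the reduction to integer points via convexity, and the congruence from \Cref{thm:cycle-periodicity}) is immediate from results already established in the excerpt.
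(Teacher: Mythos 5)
The paper states this theorem with a citation to Nachtigall (1996) and does not include a proof, so there is nothing in the paper to compare against. Your argument is correct and self-contained: the reduction to mixed-integer points via convexity is valid since \eqref{eq:cc} is linear and $\mathcal P_\mathrm{I}$ is the convex hull of such points; the congruence $\gamma^\top(x-\ell) \equiv \alpha_\gamma \pmod T$ follows from \Cref{thm:cycle-periodicity} together with the integrality of $\gamma$ and $\ell$; and your elementary lemma (if $a,b \ge 0$ and $a-b \equiv \alpha \pmod T$ with $\alpha \in [0,T)$, then $(T-\alpha)a + \alpha b \ge \alpha(T-\alpha)$) is proved cleanly by the sign-of-$k$ case split and applies with $a = \gamma_+^\top(x-\ell) \ge 0$, $b = \gamma_-^\top(x-\ell) \ge 0$. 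Your remark that the upper bound $u$ plays no role is also correct. This is a tidy, essentially one-lemma proof of the cited result.
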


A class generalizing both cycle and change-cycle inequalities are the \emph{flip inequalities} introduced by \citet{lindner_determining_2020}.
Let $I$ be a PESP instance and let $F \subseteq A$ be an arbitrary subset of arcs. We construct a new PESP instance $I_F$ from $I$ by ``flipping'' the arcs in $F$: We replace each arc $a = (i, j) \in F$ by an arc $\overline a = (j, i)$, and set $\ell_{\overline a} \coloneqq -u_a$, $u_{\overline a} \coloneqq -\ell_a$, and $w_{\overline a} \coloneqq -w_a$. From any periodic tension $x$ for $I$, we obtain a periodic tension $x_F$ for $I_F$ by defining $x_{F,a} \coloneqq x_a$ for $a \in A \setminus F$, and $x_{F, \overline a} \coloneqq -x_a$ for $a \in F$. In particular, $I$ is feasible if and only if $I_F$ is feasible, and in case of feasibility, both $I$ and $I_F$ have the same optimal objective value. Moreover, for any $\gamma \in \mathcal C$, we obtain an element $\gamma_F$ in the cycle space of $I_F$ by setting $\gamma_{F,a} \coloneqq \gamma_a$ for $a \in A \setminus F$, and $\gamma_{F, \overline a} \coloneqq -\gamma_a$ for $a \in F$. We can hence consider the change-cycle inequality for $\gamma_F$ on $I_F$ and transform it back to $I$:

\begin{theorem}[\citealp{lindner_determining_2020}]
\label{thm:flip-ineq}
Let $\gamma \in \mathcal C$ and $F \subseteq A$. Set
$$\alpha_{\gamma, F} \coloneqq \left[ - \sum_{a \in A \setminus F} \gamma_a \ell_a - \sum_{a \in F} \gamma_a u_a \right]_T.$$
Then the following \emph{flip inequality} holds for all $(x, z) \in \mathcal P_\mathrm{I}$:
\begin{equation}
\label{eq:flip}
\begin{aligned}
&(T - \alpha_{\gamma, F}) \sum_{\substack{a \in A \setminus F:\\\gamma_a > 0}} \gamma_a (x_a - \ell_a) 
+ \alpha_{\gamma, F} \sum_{\substack{a \in A \setminus F:\\\gamma_a < 0}} (-\gamma_a) (x_a - \ell_a)\\
&+ \alpha_{\gamma, F} \sum_{\substack{a \in  F:\\\gamma_a > 0}} \gamma_a (u_a - x_a)
+  (T - \alpha_{\gamma, F}) \sum_{\substack{a \in  F:\\\gamma_a < 0}} (-\gamma_a) (u_a - x_a)
\quad \geq \quad \alpha_{\gamma, F}(T-\alpha_{\gamma, F}).
\end{aligned}
\end{equation}
\end{theorem}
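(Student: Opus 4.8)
The plan is to derive \eqref{eq:flip} from the change-cycle inequality (\Cref{thm:change-cycle}) by applying the latter to the flipped instance $I_F$ and the flipped cycle $\gamma_F$, and then substituting back into the original variables using the relations $x_{F,\overline a} = -x_a$, $\ell_{\overline a} = -u_a$, $u_{\overline a} = -\ell_a$ and $\gamma_{F,\overline a} = -\gamma_a$ for $a \in F$ recorded just before the statement.

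First I would note that it suffices to check \eqref{eq:flip} at periodic tensions of $I$: the inequality is linear and involves only the $x$-variables, so by \Cref{thm:cycle-periodicity} its validity on $\mathcal P_\mathrm{I}$ is equivalent to its validity on the set of periodic tensions of $I$. By the properties of the flip construction, $x$ is a periodic tension of $I$ if and only if $x_F$ is a periodic tension of $I_F$, and $\gamma_F$ lies in the cycle space of $I_F$. Hence \Cref{thm:change-cycle}, applied to $I_F$ and $\gamma_F$, gives that the change-cycle inequality \eqref{eq:cc} for $\gamma_F$ holds at $x_F$, for every periodic tension $x$ of $I$.

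The remaining work is bookkeeping. Writing $\ell^{(F)}$ for the lower bound vector of $I_F$, one computes $\gamma_F^\top \ell^{(F)} = \sum_{a \in A\setminus F}\gamma_a\ell_a + \sum_{a\in F}\gamma_a u_a$ (an arc $a \in F$ contributes $(-\gamma_a)\cdot(-u_a)$), so $\alpha_{\gamma_F} := [-\gamma_F^\top\ell^{(F)}]_T = \alpha_{\gamma,F}$. Next I would expand the two bracketed sums in \eqref{eq:cc}. An arc $a \in A\setminus F$ contributes the term $\gamma_a(x_a-\ell_a)$, landing in the $(\gamma_F)_+$-sum when $\gamma_a>0$ and in the $(\gamma_F)_-$-sum when $\gamma_a<0$; an arc $a \in F$ contributes via $\overline a$ the term $(-\gamma_a)(x_{F,\overline a}-\ell_{\overline a}) = (-\gamma_a)(u_a-x_a)$, landing in the $(\gamma_F)_+$-sum when $\gamma_a<0$ and in the $(\gamma_F)_-$-sum when $\gamma_a>0$. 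Collecting the four groups, multiplying them by $T-\alpha_{\gamma_F}$ resp.\ $\alpha_{\gamma_F}$ as prescribed by \eqref{eq:cc}, and replacing $\alpha_{\gamma_F}$ by $\alpha_{\gamma,F}$ reproduces exactly the left-hand side of \eqref{eq:flip}, while the right-hand side $\alpha_{\gamma_F}(T-\alpha_{\gamma_F})$ becomes $\alpha_{\gamma,F}(T-\alpha_{\gamma,F})$.

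The only delicate point is this sign accounting: a forward arc of $\gamma$ inside $F$ becomes a backward arc of $\gamma_F$ and vice versa, which is precisely why the multipliers $T-\alpha_{\gamma,F}$ and $\alpha_{\gamma,F}$ appear swapped on $F$ relative to $A\setminus F$ in \eqref{eq:flip}, and why the slack is measured as $u_a-x_a$ rather than $x_a-\ell_a$ on $F$. No conceptual difficulty arises; in particular, since \eqref{eq:flip} does not involve $z$ and $\gamma$ need not be a basis element, the choice of integral cycle basis plays no role (cf.\ \Cref{rem:basis-indep}).
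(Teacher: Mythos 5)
Your proof is correct and follows exactly the route the paper sketches just before the theorem statement: apply the change-cycle inequality (\Cref{thm:change-cycle}) to the flipped instance $I_F$ and the flipped cycle $\gamma_F$, observe that $\alpha_{\gamma_F} = \alpha_{\gamma,F}$, and translate the resulting terms back to $I$ using $x_{F,\overline a} = -x_a$, $\ell^{(F)}_{\overline a} = -u_a$, and $\gamma_{F,\overline a} = -\gamma_a$. The reduction to periodic tensions, the computation of $\alpha_{\gamma_F}$, and the case-by-case sign accounting all check out.
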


\begin{remark}
\label{rem:flip-special}
The flip inequalities \eqref{eq:flip} for $F = \emptyset$ give exactly the change-cycle inequalities \eqref{eq:cc}. Moreover, by flipping all backward resp.\ all forward arcs of some $\gamma \in \mathcal C$, we obtain Odijk's cycle inequalities \eqref{eq:odijk}. Since the left-hand side of \eqref{eq:flip} is always non-negative for $(x, z) \in \mathcal P$, flip inequalities with $\alpha_{\gamma,F} = 0$ are trivial. Due to symmetry reasons, the flip inequalities for $(\gamma, F)$ and $(-\gamma, F)$ coincide, and $\alpha_{\gamma,F} = T - \alpha_{-\gamma, F}$ when $\alpha_{\gamma, F} \geq 1$.
\end{remark}

\begin{definition}
We define the \emph{flip polytope} as
$$ \mathcal P_\mathrm{flip} \coloneqq \{ (x, z) \in \mathcal P \mid (x, z) \text{ satisfies the flip inequality for all } \gamma \in \mathcal C \text{ and } F \subseteq A \}.$$
\end{definition}
Apart from the trivial relation $\mathcal P_\mathrm{I} \subseteq \mathcal P_{\mathrm{flip}} \subseteq \mathcal P$,
the flip polytope has some interesting properties \citep{lindner_determining_2020}: Every vertex of $\mathcal P_\mathrm{I}$ is a vertex of $\mathcal P_\mathrm{flip}$, but in general not a vertex of $\mathcal P$. Moreover, if $G$ is a cactus graph, i.e., every arc is contained in at most one simple cycle, then $\mathcal P_\mathrm{flip} = \mathcal P_\mathrm{I}$. However, there are PESP instances with $\mu = 2$ and $\mathcal P_\mathrm{flip} \neq \mathcal P_\mathrm{I}$.

\section{The Split Closure of the Periodic Timetabling Polyhedron}
\label{sec:split}
The relation between the periodic timetabling polytope and the flip polytope seems close and deserves more attention. In fact, in this section, we will establish that the flip polytope can be identified with the split closure.

\subsection{Preliminaries}
\label{ss:split-preliminaries}

We will now recall the definition of split inequalities, split disjunctions, and the split closure, following the treatment by \cite{conforti_integer_2014}. To two matrices $A_C \in \mathbb Q^{m \times n}$, $A_I \in \mathbb Q^{m \times p}$ and a vector $b \in \mathbb Q^p$, we associate the mixed-integer set
$$ S \coloneqq  \{(x, z) \in \mathbb R^n \times \mathbb Z^p \mid A_C x + A_I z \leq b\},$$
and the two polyhedra
\begin{align*}
P &\coloneqq \{(x, z) \in \mathbb R^n \times \mathbb R^p \mid A_C x + A_I z \leq b\}, &
P_\mathrm{I} &\coloneqq \conv(S).
\end{align*}

A \emph{split} is a pair $(\beta, \beta_0) \in \mathbb Z^p \times \mathbb Z$. The disjunction
$$\beta^\top z \leq \beta_0 \quad \vee \quad \beta^\top z \geq \beta_0 + 1$$
is satisfied for all $(x, z) \in \conv(S)$ and is called a \emph{split disjunction}. In particular, the polyhedron
$$P^{(\beta, \beta_0)} \coloneqq \conv(\{(x,z) \in P \mid  \beta^\top z \leq \beta_0 \} \cup\{(x,z) \in P \mid  \beta^\top z \geq \beta_0 + 1 \})$$
contains $\conv(S)$. The \emph{split closure} is now defined as
\begin{equation}
    \label{eq:split-def}
    P_\mathrm{split} \coloneqq \bigcap_{(\beta, \beta_0)  \in \mathbb Z^p \times \mathbb Z } P^{(\beta, \beta_0)} = \bigcap_{\beta \in \mathbb Z^p} \conv(\{(x,z) \in P \mid \beta^\top z \in \mathbb Z\}).
\end{equation} 

The split closure $P_\mathrm{split}$ is a polyhedron \citep{cook_chvatal_1990} with the property that $\conv(S) \subseteq P_\mathrm{split} \subseteq P$. It is identical to the closure given by Gomory's mixed-integer (GMI) cuts or mixed-integer rounding (MIR) cuts \citep{nemhauser_recursive_1990}. On the downside, optimization and hence separation are in general NP-hard \citep{caprara_separation_2003}.

The split closure can be described as well in terms of defining inequalities: Define 
\begin{align*}
\Lambda \coloneqq \left\{\lambda \in \mathbb R^m \,\middle|\, \begin{array}{cc}
\lambda^\top A_C = 0, \lambda^\top A_I \in \mathbb Z^p, \lambda^\top b \notin \mathbb Z, \text{and the rows of } (A_C \; A_I)\\  \text{ corresponding to non-zero entries of } \lambda \text{ are linearly independent} \end{array} \right \}.
\end{align*}
Each multiplier vector $\lambda \in \Lambda$ defines the \emph{split inequality}
\begin{equation}
\label{eq:split}
    \frac{\lambda_+^\top (b - A_Cx - A_Iz)}{[\lambda^\top b]_1} + \frac{\lambda_-^\top (b - A_Cx - A_Iz)}{1 - [\lambda^\top b]_1} \geq 1.
\end{equation}
 Here, we decompose $\lambda$ into its positive part $\lambda_+$ and negative part $\lambda_-$, and denote by $[\cdot]_1$ the fractional part in $[0, 1)$.
 The split inequality \eqref{eq:split} for $\lambda \in \Lambda$ is valid for $P^{(\lambda^\top A_I, \lfloor \lambda^\top b \rfloor)}$. Conversely, any facet of $P^{(\beta, \beta_0)}$ is defined by an inequality of the form \eqref{eq:split} for some $\lambda \in \Lambda$ with $\lambda^\top A_I = \beta$ and $\lfloor \lambda^\top b \rfloor = \beta_0$. Consequently,
$$ P_\mathrm{split} = \{(x, z) \in P \mid (x, z) \text{ satisfies the split inequality \eqref{eq:split} for all } \lambda \in \Lambda\}. $$

\subsection{Flipping is Splitting for Periodic Timetabling}

We investigate now the split closure for the cycle-based MIP formulation \eqref{eq:mip-cycle} for the Periodic Event Scheduling Problem. Thus let $(G = (V, A), T, \ell, u, w)$ be a PESP instance, and let $B$ be an integral cycle basis of $G$ with cycle matrix $\Gamma$. Rewriting in the form $A_C x + A_I z \leq b$, the fractional periodic timetabling polytope $\mathcal P$ is defined by

\begin{equation}
    \label{eq:ineq-form}
    \underset{A_C}{\underbrace{%
    \left(
    \begin{array}{c} \Gamma \\ - \Gamma \\ I \\ -I \end{array}
    \right.
    }} \;
    \underset{A_I}{\underbrace{%
    \left.
    \begin{array}{c} -TI \\ TI \\ 0 \\ 0 \end{array}
    \right)
    }}
    \begin{pmatrix} x \\ z \end{pmatrix}
    \leq
    \begin{pmatrix} 0 \\ 0 \\ u \\ -\ell \end{pmatrix},
\end{equation}
where $I$ denotes the identity matrix. We will write a multiplier vector $\lambda \in \Lambda$ as
$$\lambda = (\lambda_1, \lambda_2, \lambda_3, \lambda_4) \in \mathbb R^B \times \mathbb R^B \times \mathbb R^A \times \mathbb R^A$$ corresponding to the four row blocks in \eqref{eq:ineq-form}.

\begin{theorem}
\label{thm:split-is-flip}
Every flip inequality with $\alpha_{\gamma, F} \neq 0$ is a split inequality for the cycle-based MIP formulation of PESP \eqref{eq:mip-cycle} and vice versa. In particular, $\mathcal P_\mathrm{split} = \mathcal P_\mathrm{flip}$.
\end{theorem}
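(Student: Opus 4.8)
The plan is to exhibit, for every flip inequality with $\alpha_{\gamma,F}\neq 0$, an explicit multiplier vector $\lambda\in\Lambda$ whose split inequality \eqref{eq:split} reproduces it, and then to run this correspondence in reverse, using the linear-independence clause in the definition of $\Lambda$ to pin down the shape of an arbitrary $\lambda\in\Lambda$. The key preliminary remark is that every $(x,z)\in\mathcal P$ satisfies $\Gamma x=Tz$, so the slacks of the first two row blocks of \eqref{eq:ineq-form} vanish on $\mathcal P$; hence, when \eqref{eq:split} is evaluated at a point of $\mathcal P$, only the components $\lambda_3,\lambda_4$ enter, while $\lambda_1,\lambda_2$ matter solely through the defining conditions of $\Lambda$. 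Reading those conditions block by block, $\lambda^\top A_C=0$ becomes $(\lambda_1-\lambda_2)^\top\Gamma=\lambda_4-\lambda_3$, the condition $\lambda^\top A_I\in\mathbb Z^B$ becomes $T(\lambda_1-\lambda_2)\in\mathbb Z^B$, and $\lambda^\top b\notin\mathbb Z$ becomes $\lambda_3^\top u-\lambda_4^\top\ell\notin\mathbb Z$.

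For the implication ``flip $\Rightarrow$ split'', fix $\gamma\in\mathcal C$ and $F\subseteq A$ with $\alpha_{\gamma,F}\neq 0$. Since the rows of $\Gamma$ form an integral basis of $\mathcal C$, there is a unique and necessarily nonzero $\tilde\nu\in\mathbb Z^B$ with $\tilde\nu^\top\Gamma=\gamma$. I would set $\lambda_1:=\tilde\nu/T$, $\lambda_2:=0$, $\lambda_{3,a}:=-\gamma_a/T$ for $a\in F$ and $0$ otherwise, and $\lambda_{4,a}:=\gamma_a/T$ for $a\in A\setminus F$ and $0$ otherwise. Then $\lambda_4-\lambda_3=\gamma/T=(\lambda_1-\lambda_2)^\top\Gamma$, $T(\lambda_1-\lambda_2)=\tilde\nu\in\mathbb Z^B$, and $\lambda_3^\top u-\lambda_4^\top\ell=\tfrac1T\bigl(-\sum_{a\in F}\gamma_a u_a-\sum_{a\in A\setminus F}\gamma_a\ell_a\bigr)$ has fractional part $\alpha_{\gamma,F}/T\in(0,1)$, hence is not an integer; the rows selected by the nonzero entries of $\lambda$ are linearly independent because the block-3 and block-4 rows are $\pm$ distinct standard basis vectors in the $x$-coordinates (their index sets $F$ and $A\setminus F$ are disjoint), while the block-1 rows contribute linearly independent nonzero $z$-parts $-Te_b$. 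Thus $\lambda\in\Lambda$, $[\lambda^\top b]_1=\alpha_{\gamma,F}/T$, and $1-[\lambda^\top b]_1=(T-\alpha_{\gamma,F})/T$. Substituting into \eqref{eq:split}, discarding the vanishing block-1 and block-2 slack terms, and splitting $\lambda_3,\lambda_4$ into positive and negative parts according to the sign of $\gamma_a$, the split inequality becomes precisely \eqref{eq:flip} after multiplying through by the positive number $\alpha_{\gamma,F}(T-\alpha_{\gamma,F})$. This shows $\mathcal P_\mathrm{split}\subseteq\mathcal P_\mathrm{flip}$.

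For the converse, take an arbitrary $\lambda=(\lambda_1,\lambda_2,\lambda_3,\lambda_4)\in\Lambda$. The linear-independence condition rules out selecting both the block-3 and the block-4 row of one and the same arc (they are negatives of each other), so for every $a\in A$ at most one of $\lambda_{3,a},\lambda_{4,a}$ is nonzero. Putting $\tilde\nu:=T(\lambda_1-\lambda_2)\in\mathbb Z^B$, $\gamma:=\tilde\nu^\top\Gamma\in\mathcal C$ and $F:=\{a\in A\mid\lambda_{3,a}\neq 0\}$, the identity $\lambda_4-\lambda_3=\gamma/T$ together with this dichotomy forces $\lambda_{4,a}=\gamma_a/T$ for $a\notin F$ and $\lambda_{3,a}=-\gamma_a/T$ for $a\in F$, i.e.\ exactly the shape produced by the forward construction, while $\lambda^\top b\notin\mathbb Z$ translates into $\alpha_{\gamma,F}\neq 0$. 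Re-running the computation above then shows that \eqref{eq:split} for $\lambda$, restricted to $\mathcal P$, is a positive scalar multiple of the flip inequality \eqref{eq:flip} for $(\gamma,F)$, whence $\mathcal P_\mathrm{flip}\subseteq\mathcal P_\mathrm{split}$, and combined with the first direction, $\mathcal P_\mathrm{split}=\mathcal P_\mathrm{flip}$. The step I expect to be most delicate is this converse: one must verify that the linear-independence clause genuinely forces the disjoint-support structure on $(\lambda_3,\lambda_4)$ (so that no ``exotic'' multipliers escape the classification), and one has to be meticulous with the sign bookkeeping so that the denominators $\alpha_{\gamma,F}$ and $T-\alpha_{\gamma,F}$ of \eqref{eq:split} attach to the arc subsets prescribed by \eqref{eq:flip}.
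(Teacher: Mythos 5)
Your proposal is correct and follows essentially the same line of argument as the paper's proof: decode the conditions defining $\Lambda$ block by block, use the linear-independence clause to force disjoint supports for $\lambda_3,\lambda_4$, identify $\gamma$ and $F$ (and $[\lambda^\top b]_1 = \alpha_{\gamma,F}/T$), and observe that on $\mathcal P$ the block-1 and block-2 slacks vanish so \eqref{eq:split} scales to \eqref{eq:flip}; the reverse construction lifts $\gamma$ to $\eta^\top\Gamma=\gamma$ and sets $\lambda=(\eta/T,0,\lambda_3,\lambda_4)$. You merely present the two implications in the opposite order and spell out the linear-independence check for the constructed $\lambda$ a bit more explicitly than the paper does.
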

\begin{proof}
 We analyze the set $\Lambda$ for the MIP \eqref{eq:mip-cycle}. For $\lambda \in \Lambda$, we have
 $$ \lambda^\top A_C = (\lambda_1 - \lambda_2)^\top \Gamma + (\lambda_3 - \lambda_4)^ \top \quad \text{and} \quad \lambda^\top A_I = -T(\lambda_1 - \lambda_2)^\top.$$
 
 As $\lambda^T A_I$ is integer, we find that $\gamma \coloneqq T(\lambda_1 - \lambda_2)^\top \Gamma$ is an integer linear combination of the rows of the cycle matrix $\Gamma$, so that $\gamma \in \mathcal C$. From $\lambda^T A_C = 0$ we infer that $\lambda_3 - \lambda_4 = -\gamma/T$. By the linear independence condition, for an arc $a \in A$, not both of $\lambda_{3,a}$ and $\lambda_{4,a}$ can be non-zero. Hence, when we set $F \coloneqq \{a \in A \mid \lambda_{3,a} \neq 0\}$, we have 
 \begin{equation}
 \label{eq:flip-from-split}
 \lambda_{3,a} = \begin{cases}
    -\gamma_a/T & \text{ if } a \in F, \\
    0 & \text{ if } a \in A \setminus F,
 \end{cases} \quad \text{ and } \quad
  \lambda_{4,a} = \begin{cases}
    \gamma_a/T  & \text{ if } a \in A \setminus F, \\
    0 & \text{ if } a \in F.
 \end{cases}
 \end{equation}
 With that, the fractional part $[\lambda^\top b]_1$ evaluates to
 $$ [\lambda^\top b]_1 = [ \lambda_3^\top u - \lambda_4^\top \ell ]_1 = \left[ -\sum_{a \in F} \frac{\gamma_a u_a}{T} - \sum_{a \in A \setminus F} \frac{\gamma_a \ell_a}{T} \right]_1. $$
 Observe that for any $y \in \mathbb R$, we have
 $$ T \left[ \frac{y}{T} \right ]_1 = T \left( \frac{y}{T} - \left\lfloor \frac{y}{T} \right\rfloor \right) = y - T \left\lfloor \frac{y}{T} \right\rfloor = [y]_T,$$
 so that 
 \begin{equation}
 \label{eq:split-denom}
 T [\lambda^\top b]_1 = T [ \lambda_3^\top u - \lambda_4^\top \ell ]_1 = \alpha_{\gamma, F}, \end{equation}
 where $\alpha_{\gamma,F}$ is as in \Cref{thm:flip-ineq},
 and we have $\alpha_{\gamma, F} \neq 0$ because $\lambda^\top b \notin \mathbb Z$.
 
 We now consider the expressions $\lambda_{\pm}^\top(b - A_Cx - A_Iz)$. Since for $(x, z) \in \mathcal P$,
 $$ b - A_Cx -A_Iz = (-\Gamma x + Tz, \Gamma x - Tz, u - x, x - \ell)^\top = (0, 0, u- x, x-\ell)^\top,$$
 we have
 \begin{equation}
    \label{eq:split-num}
 \begin{aligned}
 \lambda_+^\top(b - A_Cx - A_Iz) &= \frac{1}{T} \sum_{\substack{a \in F\\\gamma_a < 0}} \gamma_a(u_a-x_a) + \frac{1}{T} \sum_{\substack{a \in A \setminus F\\\gamma_a > 0}} \gamma_a(x_a-\ell_a), \\
 \lambda_-^\top(b - A_Cx - A_Iz) &= \frac{1}{T} \sum_{\substack{a \in F\\\gamma_a > 0}} \gamma_a(u_a-x_a) + \frac{1}{T} \sum_{\substack{a \in A \setminus F\\\gamma_a < 0}} \gamma_a(x_a-\ell_a).
 \end{aligned}
 \end{equation}
 It is now evident from \eqref{eq:split-denom} and \eqref{eq:split-num} that multiplying the split inequality \eqref{eq:split} for $\lambda$ with $\alpha_{\gamma, F}(T - \alpha_{\gamma, F})$ yields the flip inequality \eqref{eq:flip} for $(\gamma, F)$.
 
 To prove the converse, starting from $\gamma \in \mathcal C$ and $F \subseteq A$ with $\alpha_{\gamma, F} \neq 0$, we define $\lambda_3$ and $\lambda_4$ as in \eqref{eq:flip-from-split}, so that $\lambda_3 - \lambda_4 = -\gamma/T$. Moreover, as \eqref{eq:split-denom} holds, we have that $\lambda^\top b$ is not an integer. Since $\gamma \in \mathcal C$, there is an integral vector $\eta \in \mathbb Z^B$ with $\eta^\top \Gamma = \gamma$. Then $\lambda \coloneqq (\eta/T, 0, \lambda_3, \lambda_4) \in \Lambda$, and the split inequality for $\lambda$ is equivalent to the flip inequality for $(\gamma, F)$.
 \end{proof}

\cite{lindner_determining_2020} proved that $\mathcal P_\mathrm{flip} = \mathcal P_\mathrm{I}$ if the cyclomatic number $\mu$ of $G$ is at most one by analyzing the combinatorial structure of $\mathcal P_\mathrm{flip}$. In terms of the split closure, this result becomes almost trivial:

\begin{corollary}
\label{cor:one-cycle}
Suppose that $\mu \leq 1$. Then $\mathcal P_\mathrm{split} = \mathcal P_\mathrm{I}$.
\end{corollary}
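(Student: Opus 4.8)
The plan is to leverage \Cref{thm:split-is-flip}, which identifies $\mathcal P_\mathrm{split}$ with $\mathcal P_\mathrm{flip}$, and then cite the known fact that $\mathcal P_\mathrm{flip} = \mathcal P_\mathrm{I}$ for cactus graphs. Since $\mu \leq 1$ means $|A| \leq |V|$ (as $\mu = |A| - |V| + 1$ for weakly connected $G$), the graph $G$ contains at most one simple cycle and is therefore trivially a cactus graph. Hence the chain $\mathcal P_\mathrm{split} = \mathcal P_\mathrm{flip} = \mathcal P_\mathrm{I}$ gives the result immediately.

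Alternatively — and this is perhaps the ``almost trivial'' argument the introductory sentence is hinting at — one can argue directly at the level of splits without invoking the combinatorial analysis of the flip polytope. When $\mu \leq 1$, the integer variable vector $z$ has at most one component. If $\mu = 0$ there are no integer variables at all, so $\mathcal P = \mathcal P_\mathrm{I} = \mathcal P_\mathrm{split}$ and there is nothing to prove (indeed $\mathcal P$ is already integral in the relevant sense). If $\mu = 1$, write $z$ as a single scalar variable. Applying the split disjunction $(\beta, \beta_0) = (1, \beta_0)$ for each $\beta_0 \in \mathbb Z$ and intersecting, \eqref{eq:split-def} gives $\mathcal P_\mathrm{split} \subseteq \bigcap_{\beta_0} \mathcal P^{(1, \beta_0)}$. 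Since \Cref{thm:odijk} (applied via $\Gamma$, whose single row is an oriented cycle) bounds $z$ between two consecutive-or-nonconsecutive integers $\lceil \cdot \rceil$ and $\lfloor \cdot \rfloor$, the polytope $\mathcal P$ is sliced by the finitely many hyperplanes $\{z = k\}$ for $k$ an integer in that range; on each such slice $z$ is fixed and the constraint matrix $\Gamma x = Tk$, $\ell \le x \le u$ is totally unimodular, so each slice is an integral polytope contained in $\mathcal P_\mathrm{I}$. The split closure for a single disjunction splitting off integers one at a time recovers exactly the convex hull of these slices, which is $\mathcal P_\mathrm{I}$.

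I would present the first (one-line) version as the proof, since the infrastructure is all in place: \textit{Proof. A graph with $\mu \leq 1$ is a cactus graph, so $\mathcal P_\mathrm{flip} = \mathcal P_\mathrm{I}$ by \citep{lindner_determining_2020}. The claim follows from \Cref{thm:split-is-flip}.} One might optionally add a sentence recording the direct argument above to justify the phrase ``almost trivial'' and to make the corollary self-contained without reference to the structural results on $\mathcal P_\mathrm{flip}$.

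The main (very minor) obstacle is purely expository: deciding how much of the direct split-based argument to include. There is no real mathematical difficulty — the corollary is a genuine one-liner given \Cref{thm:split-is-flip}. If one wants the self-contained version, the only point needing a word of care is the $\mu = 0$ edge case (no integer variables, so the split closure is the whole polyhedron, which is already the integer hull by total unimodularity), and the observation that the $z$-range from \Cref{thm:odijk} is finite so that only finitely many slices occur.
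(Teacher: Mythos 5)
Your preferred one-line proof (citing the cactus result $\mathcal P_\mathrm{flip} = \mathcal P_\mathrm{I}$ from \cite{lindner_determining_2020} and combining with \Cref{thm:split-is-flip}) is logically sound but misses the rhetorical and structural point of the corollary. The sentence immediately preceding \Cref{cor:one-cycle} says precisely that the result was obtained in \cite{lindner_determining_2020} ``by analyzing the combinatorial structure of $\mathcal P_\mathrm{flip}$'' and that ``in terms of the split closure, this result becomes almost trivial.'' So the purpose of the corollary is to \emph{re-derive} the known fact by a much shorter split-closure argument; citing the hard combinatorial theorem as a black box makes the corollary content-free. Worse, the paper's own \Cref{thm:split-block} proves the cactus statement for the split closure \emph{as a consequence of} \Cref{cor:one-cycle} (plus \Cref{thm:split-product}), so within the paper's internal logic your proposed dependency runs in the wrong direction.

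Your alternative, the direct split-based argument, is essentially the paper's proof, but the paper does it more cleanly. For $\mu = 1$, after observing that $z$ is a single scalar, the paper just uses the second form of \eqref{eq:split-def} to write
$\mathcal P_\mathrm{split} = \bigcap_{\beta \in \mathbb Z} \conv\{(x,z) \in \mathcal P \mid \beta z \in \mathbb Z\}$, notes that the $\beta = 1$ term is tightest and gives $\conv\{(x,z) \in \mathcal P \mid z \in \mathbb Z\}$, which equals $\mathcal P_\mathrm{I}$ \emph{by definition} of $\mathcal P_\mathrm{I}$. Your detour through \Cref{thm:odijk} (finiteness of the $z$-range) and total unimodularity of the slices is not needed: $\mathcal P_\mathrm{I}$ is by \Cref{def:polytopes} the convex hull of the points of $\mathcal P$ with $z$ integer, and nothing about integrality of $x$ on the slices is required or relevant. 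The $\mu = 0$ case is handled the same way by both of you: no integer variables, so $\mathcal P = \mathcal P_\mathrm{split} = \mathcal P_\mathrm{I}$ immediately.

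In short: your second route is correct and matches the paper's intended argument, but contains unnecessary machinery; your first and preferred route, while not wrong, should not be used as the proof here because it undercuts the stated purpose of the corollary and inverts the paper's own logical dependency graph.
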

\begin{proof}
This is clear for $\mu = |B| = 0$, as
$$\mathcal P = \mathcal P_\mathrm{split} = \mathcal P_\mathrm{I} = \{(x,z) \in \mathbb R^A \mid \ell \leq x \leq u\} .$$
For $\mu = |B| = 1$, there is only a single integer variable $z$, and by virtue of \eqref{eq:split-def},
\begin{align*}
\mathcal P_\mathrm{split} = \bigcap_{\beta \in \mathbb Z} \conv \left\{(x,z) \in \mathcal P \,\middle|\, \beta z \in \mathbb Z \right \} = \conv \left\{(x,z) \in \mathcal P \,\middle|\, z \in \mathbb Z \right \} = \mathcal P_\mathrm{I}.  & \qedhere
\end{align*}
\end{proof}

\subsection{Chvátal Closure}
\label{ss:chvatal}

For any mixed-integer set $S$ defined by $(A_C, A_I, b)$ with associated polyhedron $P$, one can define the \emph{Chvátal closure} as a ``one-side split closure'' by
$$ P_\mathrm{Ch} \coloneqq \bigcap \left\{P^{(\beta, \beta_0)} \middle|(\beta, \beta_0) \in \mathbb Z^p \times \mathbb Z \text{ s.t. } P \cap \{\beta^\top z \leq \beta_0 \} = \emptyset \text{ or } P \cap \{\beta^\top z \geq \beta_0 + 1 \} = \emptyset\right\},$$
see, e.g., \cite{conforti_polyhedral_2010, conforti_integer_2014}.
It is clear that $P_\mathrm{split} \subseteq P_\mathrm{Ch} \subseteq P$. For Periodic Event Scheduling, we find:

\begin{theorem}
\label{thm:oneside}
    The Chvátal closure of the MIP \eqref{eq:mip-cycle} is given by
    $$ \mathcal P_\mathrm{Ch} = \{ (x,z) \in \mathcal P \mid (x, z) \text{ satisfies the cycle inequality \eqref{eq:odijk} for all } \gamma \in \mathcal C \}.$$
\end{theorem}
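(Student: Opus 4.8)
The plan is to argue directly from the definition of $\mathcal P_\mathrm{Ch}$ as the intersection of the $P^{(\beta,\beta_0)}$ over \emph{one-sided} splits, exploiting the very weak structure of the relaxation $\mathcal P$: on $\mathcal P$ the constraint $\Gamma x = Tz$ forces $z = \Gamma x/T$, so every split $\beta^\top z \le \beta_0$ becomes the purely combinatorial constraint $\gamma^\top x/T \le \beta_0$ with $\gamma \coloneqq \beta^\top\Gamma \in \mathcal C$; and since $\mathcal P$ projects onto the hyperrectangle $\prod_{a\in A}[\ell_a,u_a]$ in the $x$-coordinates, $\min\{\gamma^\top x/T \mid (x,z)\in\mathcal P\} = (\gamma_+^\top\ell - \gamma_-^\top u)/T =: c_\gamma$ and $\max\{\gamma^\top x/T \mid (x,z)\in\mathcal P\} = (\gamma_+^\top u - \gamma_-^\top\ell)/T =: d_\gamma$. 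Conversely every $\gamma \in \mathcal C$ is of the form $\beta^\top\Gamma$ for some $\beta \in \mathbb Z^B$, because the rows of $\Gamma$ form an integral cycle basis.

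First I would translate the one-sidedness conditions for a split $(\beta,\beta_0)\in\mathbb Z^B\times\mathbb Z$ with $\gamma = \beta^\top\Gamma$. Using the min/max above, $\mathcal P \cap \{\beta^\top z \le \beta_0\} = \emptyset$ holds exactly when $\beta_0 \le \lceil c_\gamma\rceil - 1$, and then $P^{(\beta,\beta_0)} = \{(x,z)\in\mathcal P \mid \gamma^\top x/T \ge \beta_0+1\}$ (no genuine convex hull is needed, since $\mathcal P$ intersected with one half-space is already convex); symmetrically, $\mathcal P \cap \{\beta^\top z \ge \beta_0+1\} = \emptyset$ holds exactly when $\beta_0 \ge \lfloor d_\gamma\rfloor$, and then $P^{(\beta,\beta_0)} = \{(x,z)\in\mathcal P \mid \gamma^\top x/T \le \beta_0\}$. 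The only slightly fiddly point is the ceiling/floor bookkeeping when converting the strict emptiness inequalities on the integer $\beta_0$ into these rounded thresholds; one also notes that $(-\gamma)_+ = \gamma_-$ swaps the two inequalities of \eqref{eq:odijk}, so treating the lower bound for all $\gamma\in\mathcal C$ automatically covers the upper bound as well.

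Next I would assemble the intersection. Let $Q \coloneqq \{(x,z)\in\mathcal P \mid \lceil c_\gamma\rceil \le \gamma^\top x/T \le \lfloor d_\gamma\rfloor \text{ for all } \gamma\in\mathcal C\}$; by \Cref{thm:odijk} this is precisely the right-hand side of the claimed identity. For $\mathcal P_\mathrm{Ch}\subseteq Q$: given $\gamma\in\mathcal C$, choose $\beta$ with $\beta^\top\Gamma=\gamma$; then $(\beta,\lceil c_\gamma\rceil-1)$ and $(\beta,\lfloor d_\gamma\rfloor)$ are one-sided splits whose polyhedra $P^{(\beta,\cdot)}$ are exactly $\{(x,z)\in\mathcal P\mid\gamma^\top x/T\ge\lceil c_\gamma\rceil\}$ and $\{(x,z)\in\mathcal P\mid\gamma^\top x/T\le\lfloor d_\gamma\rfloor\}$, so $\mathcal P_\mathrm{Ch}$, being contained in every term of its defining intersection, lies in both; intersecting over all $\gamma$ gives $\mathcal P_\mathrm{Ch}\subseteq Q$. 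For $Q\subseteq\mathcal P_\mathrm{Ch}$: by the previous paragraph every one-sided split's $P^{(\beta,\beta_0)}$ is one of the two half-space intersections above with $\beta_0+1\le\lceil c_\gamma\rceil$ resp.\ $\beta_0\ge\lfloor d_\gamma\rfloor$, hence contains the corresponding Odijk-tight set and therefore contains $Q$; intersecting shows $Q\subseteq\mathcal P_\mathrm{Ch}$.

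I do not anticipate a real obstacle; the argument is essentially a careful unwinding of definitions, and the one place to be attentive is the rounding bookkeeping described above. I would close by remarking, in the spirit of \Cref{rem:basis-indep}, that the characterization is independent of the chosen integral cycle basis, since the cycle inequalities \eqref{eq:odijk} range over all of $\mathcal C$.
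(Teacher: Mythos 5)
Your proof is correct and follows essentially the same route as the paper's: both translate $\beta^\top z$ to $\gamma^\top x/T$ via $\Gamma x = Tz$, compute the min and max of $\gamma^\top x/T$ over $\mathcal P$, characterize when a split is one-sided in terms of the rounded thresholds, and then show the intersection collapses to exactly the cycle inequalities. Your two-inclusion organization (comparing $\mathcal P_\mathrm{Ch}$ with $Q$) is a cosmetic variant of the paper's nested-intersection collapse, and your parenthetical about convexity making the $\conv$ superfluous on a one-sided split is a nice, correct observation that the paper leaves implicit.
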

\begin{proof}
    We need to determine those $(\beta, \beta_0) \in \mathbb Z^B \times \mathbb Z$ for which  one of $\mathcal P \cap \{\beta^\top z \leq \beta_0 \}$ or $\mathcal P \cap \{\beta^\top z \geq \beta_0 + 1 \}$ is empty. Since $\Gamma x = Tz$ holds for all $(x, z) \in \mathcal P$, we have $\beta^\top z = \frac{\gamma^\top x}{T}$ for $\gamma \coloneqq \beta^\top \Gamma \in \mathcal C$ for an arbitrary choice of $\beta \in \mathbb Z^B$.
    Let
    \begin{align*}
         k_1 &\coloneqq \left  \lceil \min \left \{ \frac{\gamma^\top x}{T}  \,\middle|\, (x, z) \in \mathcal P \right \} \right \rceil
         =  \left  \lceil \frac{\gamma^\top_+ \ell -\gamma^\top_- u}{T} \right \rceil, \\
         k_2 &\coloneqq \left  \lfloor \max \left \{ \frac{\gamma^\top x}{T}  \,\middle|\, (x, z) \in \mathcal P \right \}  \right \rfloor
         = \left\lfloor \frac{\gamma^\top_+ u - \gamma^\top_-\ell}{T} \right\rfloor .
    \end{align*}
    Then $\mathcal P \cap \{ \frac{\gamma^\top x}{T} \leq \beta_0\} = \emptyset$ for all $\beta_0 \leq k_1-1$ and $\mathcal P \cap \{ \frac{\gamma^\top x}{T} \geq \beta_0 +1 \} = \emptyset$ for $\beta_0 \geq k_2$.
    If $k_1 \geq k_2 + 1$, then $\mathcal P_\mathrm{Ch} = \emptyset$, and no $(x, z) \in \mathcal P$ satisfies the cycle inequality \eqref{eq:odijk} for $\gamma$. Otherwise, both polyhedra    $\mathcal P \cap \{ \frac{\gamma^\top x}{T} \geq k_1\}$ and $\mathcal P \cap \{ \frac{\gamma^\top x}{T} \leq k_2\}$ are non-empty, and they are defined by $\mathcal P$ and Odijk's cycle inequalities \eqref{eq:odijk}.

    Moreover, since $\mathcal P \cap \{ \frac{\gamma^\top x}{T} \geq k_1\} \subseteq  \mathcal P \cap \{\frac{\gamma^\top x}{T} \geq \beta_0 \}$ for any $\beta_0 \leq k_1$ and $\mathcal P \cap \{ \frac{\gamma^\top x}{T} \leq k_2\} \subseteq \mathcal P \cap \{ \frac{\gamma^\top x}{T} \leq \beta_0\}$ for $\beta_0 \geq k_2$, we can conclude that $$\bigcap_{\beta_0 \leq k_1-1} P^{(\beta, \beta_0)} = P^{(\beta, k_1-1)} \quad \text{and} \quad \bigcap_{\beta_0 \geq k_2} P^{(\beta,\beta_0)} = P^{(\beta, k_2)}.$$ 

    We conclude that for each $\beta \in \mathbb Z^B$ and $k_1$ and $k_2$ as above,
    \begin{align*}
    & \bigcap  \left\{P^{(\beta, \beta_0)} \middle|\beta_0 \in \mathbb Z \text{ s.t. } P \cap \{\beta^\top z \leq \beta_0 \} = \emptyset \text{ or } P \cap \{\beta^\top z \geq \beta_0 + 1 \} = \emptyset\right\} \\
    &= P^{(\beta, k_1-1)} \cap P^{(\beta, k_2)},
    \end{align*}
    from which the claim follows. 
\end{proof}

\section{Separation of Split Cuts}
\label{sec:sepa}
From a practical point of view, the split closure can be a valuable tool to provide dual bounds for mixed integer programs. Of course, this requires efficient separation methods. As we have established that the split closure is of a specific form in the case of periodic timetabling, we can make use of the combinatorial structure behind flip inequalities to separate cuts. 

\subsection{Simple Cycles}

We show at first that for separating split/flip inequalities, it suffices to consider \emph{simple} oriented cycles, i.e., oriented cycles $\gamma \in \mathcal C \cap \{-1,0,1\}^A$ that yield a simple cycle on the underlying undirected graph of $G$.

\begin{lemma}[Orientation-preserving cycle decomposition]
\label{lem:orient-decomp}
Let $\gamma \in \mathcal C$. Then there are simple oriented cycles $\delta_1, \dots, \delta_r \in \mathcal C$ such that $\gamma = \sum_{k=1}^r \delta_k$, $\gamma_+ = \sum_{k=1}^r \delta_{k,+}$, and $\gamma_- = \sum_{k=1}^r \delta_{k,-}$.
\end{lemma}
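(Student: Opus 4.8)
The plan is to prove this by induction on $\sum_{a\in A}|\gamma_a|$, the total absolute weight of $\gamma$, extracting one simple oriented cycle at a time while preserving the sign pattern. First I would dispose of the base case: if $\gamma = 0$ there is nothing to prove (take $r=0$), and if $\gamma$ has small support it is already simple. For the inductive step, assume $\gamma\neq 0$ and consider the support $\mathrm{supp}(\gamma) = \{a : \gamma_a\neq 0\}$; since $\gamma\in\mathcal C$, viewing $\gamma$ as a circulation, every vertex touched by $\mathrm{supp}(\gamma)$ has the property that its net flow is zero. The key idea is to build an auxiliary digraph $G'$ on the same vertex set whose arc set reflects the \emph{signed} orientation of $\gamma$: for each $a = (i,j)\in A$ with $\gamma_a > 0$ include the arc $(i,j)$, and for each $a$ with $\gamma_a < 0$ include the reversed arc $(j,i)$. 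Then $\gamma$ induces a nonnegative circulation on $G'$ (each arc carrying $|\gamma_a|$ units), and at every vertex the total in-flow equals the total out-flow.

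Next I would find a directed closed walk in $G'$ using only arcs on which $\gamma$ is currently nonzero: start from any vertex incident to $\mathrm{supp}(\gamma)$ and repeatedly follow an outgoing arc with positive residual weight; because the flow is balanced at every vertex, one can always continue, and since the vertex set is finite, the walk eventually revisits a vertex, closing a cycle $C$. Picking a minimal such repeated vertex yields a \emph{simple} directed cycle $C$ in $G'$. Translating $C$ back to $G$ gives a simple oriented cycle $\delta_1\in\mathcal C\cap\{-1,0,1\}^A$ whose forward arcs are among the $a$ with $\gamma_a>0$ and whose backward arcs are among the $a$ with $\gamma_a<0$ — this is precisely the orientation-preserving property at the level of a single cycle. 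Set $\gamma' \coloneqq \gamma - \delta_1$. By construction $\gamma'\in\mathcal C$, and on every arc $a$ the value $\gamma'_a$ has the same sign as $\gamma_a$ or is zero (we subtracted a $0/\pm1$ vector that agrees in sign with $\gamma$ on its support), hence $\gamma'_+ = \gamma_+ - \delta_{1,+}$ and $\gamma'_- = \gamma_- - \delta_{1,-}$ coordinatewise, and $\sum_a|\gamma'_a| = \sum_a|\gamma_a| - |\delta_1| < \sum_a|\gamma_a|$. Apply the induction hypothesis to $\gamma'$ to obtain $\delta_2,\dots,\delta_r$, and prepend $\delta_1$; the additivity of $\gamma$, $\gamma_+$, and $\gamma_-$ follows by telescoping.

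The main obstacle is making precise the claim that one can always extract the simple cycle $\delta_1$ so that subtracting it \emph{cannot} create sign cancellations — i.e. that no arc with $\gamma_a > 0$ ever gets a forward contribution from $\delta_1$ that is not matched by residual positive weight, and symmetrically for negative arcs. This is exactly why the auxiliary digraph $G'$ is set up with reversed arcs for the negative part: in $G'$ all weights are nonnegative, so a directed cycle in $G'$ corresponds to incrementing forward arcs and ``undoing'' backward arcs in a way that stays within $[0, |\gamma_a|]$ on each coordinate. One should also check the edge case where $\gamma$ already has entries of absolute value larger than $1$ on some arcs (so $\gamma$ itself is not in $\{-1,0,1\}^A$): the argument still produces a simple oriented $0/\pm1$ cycle, and the induction measure $\sum_a|\gamma_a|$ strictly decreases, so termination is not an issue. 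A minor point to state carefully is that $\delta_1$, being a simple directed cycle of $G'$ pulled back to $G$, may traverse an arc of $G$ at most once (in one of the two directions), which is what guarantees $\delta_1\in\{-1,0,1\}^A$ and that it is simple on the underlying undirected graph.
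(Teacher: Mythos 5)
Your proof is correct and takes essentially the same approach as the paper: reverse the arcs on which $\gamma$ is negative to form an auxiliary digraph $G_\gamma$ on which $|\gamma|$ becomes a nonnegative circulation, then decompose that circulation into simple directed cycles and pull back. The only difference is that you spell out the standard circulation-decomposition argument (greedy extraction of a simple cycle plus induction on $\sum_a |\gamma_a|$), whereas the paper simply invokes it as a known fact.
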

\begin{proof}
Let $A^+ \coloneqq \{a \in A \mid \gamma_a > 0\}$ and $A^- \coloneqq \{a \in A \mid \gamma_a < 0\}$ be the set of forward and backward arcs of $\gamma$, respectively. Construct a digraph $G_\gamma$, whose set of arcs $A_\gamma$ is given by
$$ A_\gamma \coloneqq A^+ \cup \{(j, i) \mid (i, j) \in A^- \}.$$
Define $g_{ij} \coloneqq \gamma_{ij}$ if $(i, j) \in A^+$ and $g_{ji} \coloneqq -\gamma_{ij}$ if $(i, j) \in A^-$. Then $g \geq 0$ is a circulation in $G_\gamma$, so that it decomposes into simple directed cycles $d_1, \dots, d_r$. Finally set
$\delta_{k,ij} \coloneqq d_{k,ij}$ if $(i, j) \in A^+$ and $\delta_{k,ij} \coloneqq -d_{k,ji}$ if $(i, j) \in A^-$.
\end{proof}

\begin{theorem}
\label{thm:simple}
Let $F \subseteq A$ and $(x, z) \in \mathcal P$. If $(x, z)$ satisfies all flip inequalities w.r.t.\ $F$ and all simple oriented cycles $\gamma$, then it satisfies all flip inequalities w.r.t.\ $F$ and all $\gamma \in \mathcal C$. In particular,
$$ \mathcal P_\mathrm{split} = \mathcal P_\mathrm{flip} = \left \{ (x, z) \in \mathcal P \,\middle|\, \begin{array}{cc} (x, z) \text{ satisfies the flip inequality} \\ \text{for all simple oriented cycles } \gamma \in \mathcal C \text{ and  all } F \subseteq A \end{array} \right \}. $$
\end{theorem}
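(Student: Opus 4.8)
The plan is to fix $F \subseteq A$ and a point $(x,z) \in \mathcal P$ that satisfies all flip inequalities with respect to $F$ and all simple oriented cycles, and to deduce the flip inequality for $(\gamma, F)$ for an arbitrary $\gamma \in \mathcal C$. The natural tool is the orientation-preserving cycle decomposition from \Cref{lem:orient-decomp}: write $\gamma = \sum_{k=1}^r \delta_k$ with each $\delta_k$ a simple oriented cycle, and crucially $\gamma_+ = \sum_k \delta_{k,+}$ and $\gamma_- = \sum_k \delta_{k,-}$, so the forward/backward split of each $\delta_k$ is inherited coherently from $\gamma$.

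The key observation is that the left-hand side of the flip inequality \eqref{eq:flip}, as a function of the cycle, is \emph{additive} over such a decomposition. Indeed, each of the four sums in \eqref{eq:flip} ranges over arcs $a$ in a fixed set ($A\setminus F$ or $F$) with a fixed sign of $\gamma_a$, and the summand is $|\gamma_a|$ times a quantity depending only on $(x,\ell,u)$, not on the cycle. Because $\gamma_+ = \sum_k \delta_{k,+}$ and $\gamma_- = \sum_k \delta_{k,-}$, for each arc $a$ the sign of $\gamma_a$ agrees with the sign of every $\delta_{k,a}$ that is nonzero, and $|\gamma_a| = \sum_k |\delta_{k,a}|$. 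Hence, writing $L_F(\gamma, x)$ for the left-hand side of \eqref{eq:flip} with the prefactors $(T-\alpha)$ and $\alpha$ stripped off — i.e. the ``geometric'' part $\sum_{A\setminus F, \gamma_a>0}\gamma_a(x_a-\ell_a) + \cdots$ split into its two nonnegative halves — we get that both halves are additive: the forward-type part for $\gamma$ equals the sum over $k$ of the forward-type parts for $\delta_k$, and likewise for the backward-type part.

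The subtlety is the nonlinear coefficient $\alpha_{\gamma,F} = \big[-\sum_{a\in A\setminus F}\gamma_a\ell_a - \sum_{a\in F}\gamma_a u_a\big]_T$, which is \emph{not} additive over the decomposition because of the modulo. I would handle this exactly as the change-cycle/flip inequalities are classically derived: observe that the argument $-\sum_{a\in A\setminus F}\gamma_a\ell_a - \sum_{a\in F}\gamma_a u_a$ \emph{is} additive (before taking $[\cdot]_T$), so $\alpha_{\gamma,F} \equiv \sum_k \alpha_{\delta_k,F} \pmod T$, and thus $\alpha_{\gamma,F} = \big[\sum_k \alpha_{\delta_k, F}\big]_T$. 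The cleanest route is then to appeal to the proof strategy behind \Cref{thm:flip-ineq} itself: flip the arcs of $F$ to pass to the instance $I_F$, where the flip inequality for $(\gamma,F)$ becomes the change-cycle inequality for $\gamma_F$, and the decomposition $\gamma = \sum_k\delta_k$ becomes $\gamma_F = \sum_k \delta_{k,F}$ with the forward/backward parts still adding up (since flipping an arc negates every $\delta_{k,a}$ simultaneously). This reduces the claim to: \emph{if a point satisfies the change-cycle inequality for all simple oriented cycles, it satisfies it for all $\gamma \in \mathcal C$}. For the change-cycle inequality, with $y \coloneqq x - \ell \geq 0$, the inequality for $\gamma$ reads $(T-\alpha_\gamma)\gamma_+^\top y + \alpha_\gamma \gamma_-^\top y \geq \alpha_\gamma(T-\alpha_\gamma)$; one verifies that if $\alpha_{\delta_k} \mapsto a_k$ with $\sum_k a_k \equiv \alpha_\gamma \pmod T$ and each simple-cycle inequality $(T-a_k)\delta_{k,+}^\top y + a_k\delta_{k,-}^\top y \ge a_k(T-a_k)$ holds, then summing (after dividing each by the appropriate factor) together with $\gamma_\pm^\top y = \sum_k \delta_{k,\pm}^\top y$ yields the inequality for $\gamma$. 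The main obstacle is precisely this last arithmetic step — controlling how the floor/modulo in $\alpha_{\gamma,F}$ interacts with summation — and I expect it to require a short but careful case analysis on whether $\sum_k a_k < T$, $= T$, or spans several multiples of $T$; the convexity/nonnegativity of $y$ and the product form $\alpha(T-\alpha)$ are what make it go through. Once the flip inequality for every $\gamma \in \mathcal C$ is recovered, the displayed identity for $\mathcal P_\mathrm{split} = \mathcal P_\mathrm{flip}$ follows immediately by combining with \Cref{thm:split-is-flip}.
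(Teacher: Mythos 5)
Your plan follows the paper's route essentially exactly: reduce via the flipped instance $I_F$ to the change-cycle case, decompose $\gamma$ with the orientation-preserving decomposition of \Cref{lem:orient-decomp}, exploit the additivity $\gamma_\pm^\top y = \sum_k \delta_{k,\pm}^\top y$, and then combine the simple-cycle inequalities. All of that is correct. The problem is that you stop at the one step that is actually the content of the theorem — the ``arithmetic step'' you flag as the main obstacle — without exhibiting the multipliers that make ``summing (after dividing each by the appropriate factor)'' work.

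That missing step is precisely what the paper supplies. The paper normalizes the change-cycle inequality to the form $\frac{\gamma_+^\top y}{\alpha_\gamma} + \frac{\gamma_-^\top y}{T-\alpha_\gamma} \geq 1$, proceeds by induction on the number of pieces $r$ (peeling off one $\delta_k$ at a time, so only a single wrap-around can occur per step), and introduces the coefficients
$\kappa_\delta \coloneqq \min\{\alpha_\delta/\alpha_\gamma,\, (T-\alpha_\delta)/(T-\alpha_\gamma)\}$ and $\kappa_\varepsilon$ analogously. Taking the minimum is what guarantees the term-by-term domination needed when one replaces $1/\alpha_\gamma$ and $1/(T-\alpha_\gamma)$ by $1/\alpha_\delta$, $1/(T-\alpha_\delta)$; the short case analysis on $\alpha_\delta + \alpha_\varepsilon \lessgtr T$ then shows $\kappa_\delta + \kappa_\varepsilon = 1$, which closes the induction. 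Your all-at-once version can also be made to work (take $\lambda_k = \min\{\alpha_\gamma/a_k,\,(T-\alpha_\gamma)/(T-a_k)\}$ and show $\sum_k \min\{(T-\alpha_\gamma)a_k,\;\alpha_\gamma(T-a_k)\} \geq \alpha_\gamma(T-\alpha_\gamma)$ using $\sum_k a_k \equiv \alpha_\gamma \pmod T$ and nonnegativity of each summand), but you must actually exhibit such a choice and carry out the verification; ``I expect it to require a short but careful case analysis'' is a statement of intent, not a proof. Until the multipliers are specified and the bound is checked, the argument has a genuine gap at its crux.

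Two minor remarks: you should also explicitly dispose of the degenerate cases the paper handles, namely $\alpha_\gamma = 0$ (the flip inequality is trivial) and $\alpha_{\delta_k} = 0$ for some $k$ (that piece contributes nothing to the right-hand side, and dropping it only shrinks the left-hand side), since the normalized form of the inequality divides by $\alpha$ and $T-\alpha$. And the final sentence of the theorem does not follow from \Cref{thm:split-is-flip} alone; you need the preceding reduction-to-simple-cycles to justify restricting the description of $\mathcal P_\mathrm{flip}$ to simple cycles.
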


\begin{proof}
The inclusion $(\subseteq)$ is clear, it remains to show $(\supseteq)$. Suppose that $(x, z)$ satisfies the flip inequalities w.r.t.\ $F$ and all simple oriented cycles. Moving to the flipped instance $I_F$ as in \Cref{ss:timetabling-polytope}, we can assume that $F = \emptyset$, so that it suffices to consider the change-cycle inequality \eqref{eq:cc} for an arbitrary $\gamma \in \mathcal C$. Let $\gamma = \delta_1 + \dots + \delta_r$ be an orientation-preserving decomposition as in \Cref{lem:orient-decomp}. We proceed by induction on $r$.

If $r \leq 1$, then $\gamma = 0$ or $\gamma$ is simple, and there is nothing to show.

Now assume $r \geq 2$. There is nothing to show if $\alpha_\gamma = 0$, as the left-hand side of the change-cycle inequality is always non-negative. We hence assume $\alpha_\gamma > 0 $.

By induction hypothesis, $(x, z)$ satisfies the change-cycle inequality for the cycles $\delta \coloneqq \delta_1$ and $\varepsilon \coloneqq \delta_2 + \dots + \delta_r$. If $\alpha_\delta = 0$ resp.\ $\alpha_\varepsilon = 0$, then we have $\alpha_\gamma = \alpha_\varepsilon$ resp.\ $\alpha_\gamma = \alpha_\delta$, as $\alpha_\gamma = [\alpha_\delta + \alpha_\varepsilon]_T$. The validity of the change-cycle inequality for $\gamma$ then follows immediately because the right-hand side equals the one for $\varepsilon$ resp.\ $\delta$, while the left-hand side can only become larger.

We are hence left with the case $\alpha_\gamma, \alpha_\delta, \alpha_\varepsilon > 0$.
In this case we can rewrite the change-cycle inequality \eqref{eq:cc} so that it is of the form 
$$ \frac{\gamma_+^\top (x-\ell)}{\alpha_\gamma} + \frac{\gamma_-^\top (x-\ell)}{T - \alpha_\gamma} \geq 1,$$
which will be the key ingredient in our argumentation.
Define 
$$\kappa_\delta \coloneqq \min\left\{ \frac{\alpha_\delta}{\alpha_\gamma},  \frac{T-\alpha_\delta}{T-\alpha_\gamma} \right\} \quad \text{ and } \quad \kappa_\varepsilon \coloneqq \min \left\{ \frac{\alpha_\varepsilon}{\alpha_\gamma},  \frac{T-\alpha_\varepsilon}{T-\alpha_\gamma} \right\}.$$
With $y \coloneqq x- \ell$, using that $(x, z)$ satisfies the change-cycle inequality w.r.t.\ both $\delta$ and $\varepsilon$,
\begin{align*}
    \frac{\gamma_+^\top y}{\alpha_\gamma} + \frac{\gamma_-^\top y}{T - \alpha_\gamma} &= \frac{\delta_+^\top y}{\alpha_\gamma} +  \frac{\delta_-^\top y}{T - \alpha_\gamma}  + \frac{\varepsilon_+^\top y}{\alpha_\gamma} +  \frac{\varepsilon_-^\top y}{T - \alpha_\gamma}  \\
    &=  \frac{\alpha_\delta}{\alpha_\gamma} \frac{\delta_+^\top y}{\alpha_\delta} +  \frac{T-\alpha_\delta}{T-\alpha_\gamma} \frac{\delta_-^\top y}{T - \alpha_\delta}  +  \frac{\alpha_\varepsilon}{\alpha_\gamma} \frac{\varepsilon_+^\top y}{\alpha_\varepsilon} +\frac{T-\alpha_\varepsilon}{T-\alpha_\gamma} \frac{\varepsilon_-^\top y}{T - \alpha_\varepsilon} \\
    &\geq \kappa_\delta \left(  \frac{\delta_+^\top y}{\alpha_\delta} + \frac{\delta_-^\top y}{T - \alpha_\delta}  \right) + \kappa_\varepsilon \left(  \frac{\varepsilon_+^\top y}{\alpha_\varepsilon} + \frac{\varepsilon_-^\top y}{T - \alpha_\varepsilon}  \right)\\
    &  \geq \kappa_\delta + \kappa_\varepsilon,
\end{align*}

\begin{claim}
$\kappa_\delta + \kappa_\varepsilon = 1$.
\end{claim}

If the claim holds, then the change-cycle inequality w.r.t.\ $\gamma$ holds for $(x, z)$, and we are done. Recall that $\alpha_\gamma = [\alpha_\delta + \alpha_\varepsilon]_T$, so that
$$ \alpha_\gamma = \begin{cases}
    \alpha_\delta + \alpha_\varepsilon &\text{ if }  \alpha_\delta + \alpha_\varepsilon < T,\\
    \alpha_\delta + \alpha_\varepsilon - T &\text{ otherwise}.
\end{cases}
$$

If $\alpha_\delta + \alpha_\varepsilon < T$, then $\alpha_\gamma = \alpha_\delta + \alpha_\varepsilon$, hence $\alpha_\delta \leq \alpha_\gamma$ and $T - \alpha_\gamma = T - \alpha_\delta - \alpha_\varepsilon \leq T - \alpha_\delta$, so that $\kappa_\delta =  \alpha_\delta/\alpha_\gamma$. Analogously, $\kappa_\varepsilon =  \alpha_\varepsilon/\alpha_\gamma$, so that $\kappa_\delta + \kappa_\varepsilon = 1$.

In the other case, we have $\alpha_\gamma = \alpha_\delta + \alpha_\varepsilon - T$. From this, we infer $\alpha_\delta = \alpha_\gamma + T - \alpha_\varepsilon \geq \alpha_\gamma$ and 
$T - \alpha_\gamma = 2T - \alpha_\delta - \alpha_\varepsilon \geq (T - \alpha_\delta) + (T - \alpha_\varepsilon) \geq T - \alpha_\delta$, so that $\kappa_\delta = (T-\alpha_\delta)/(T-\alpha_\gamma)$. Analogously, $\kappa_\varepsilon = (T-\alpha_\varepsilon)/(T-\alpha_\gamma)$, so that again $\kappa_\delta + \kappa_\varepsilon = 1$.
\end{proof}

Using \Cref{thm:oneside} and \Cref{rem:flip-special}, \Cref{thm:simple} implies the analogous result for the Chvátal split closure and the cycle inequalities:
\begin{corollary}
Let $(x, z) \in \mathcal P$. If $(x, z)$ satisfies all cycle inequalities w.r.t.\ all simple oriented cycles $\gamma$, then it satisfies all cycle inequalities for all $\gamma \in \mathcal C$. In particular,
  $$ \mathcal P_\mathrm{Ch} = \left \{ (x, z) \in \mathcal P \,\middle|\, \begin{array}{cc} (x, z) \text{ satisfies the cycle inequality} \\ \text{for all simple oriented cycles } \gamma \in \mathcal C \end{array} \right \}.$$
\end{corollary}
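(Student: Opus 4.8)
The plan is to prove the reduction directly, by the same mechanism that drives \Cref{thm:simple}, but working with Odijk's inequalities \eqref{eq:odijk} themselves, so as to avoid tracking a varying flipped arc set. Let $(x,z)\in\mathcal P$ satisfy the cycle inequality \eqref{eq:odijk} for every simple oriented cycle, and let $\gamma\in\mathcal C$ be arbitrary. I would invoke the \emph{orientation-preserving} decomposition of \Cref{lem:orient-decomp}: write $\gamma=\sum_{k=1}^r\delta_k$ with simple oriented cycles $\delta_k$ and, crucially, $\gamma_+=\sum_k\delta_{k,+}$, $\gamma_-=\sum_k\delta_{k,-}$. The point of the orientation-preserving version is precisely that it makes the per-cycle right-hand sides in \eqref{eq:odijk} add up to the right-hand side for $\gamma$; a plain circulation decomposition would not suffice.

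For the lower bound, each $\delta_k$ satisfies \eqref{eq:odijk}, i.e.\ $\delta_k^\top x\ge T\lceil(\delta_{k,+}^\top\ell-\delta_{k,-}^\top u)/T\rceil$. Summing over $k$, using $\gamma^\top x=\sum_k\delta_k^\top x$ on the left and, on the right, the subadditivity of $\lceil\cdot\rceil$ together with $\sum_k(\delta_{k,+}^\top\ell-\delta_{k,-}^\top u)=\gamma_+^\top\ell-\gamma_-^\top u$, one obtains $\gamma^\top x\ge T\lceil(\gamma_+^\top\ell-\gamma_-^\top u)/T\rceil$. The upper bound is proved symmetrically, with $\lceil\cdot\rceil$ replaced by $\lfloor\cdot\rfloor$ and subadditivity replaced by superadditivity of the floor. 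Hence $(x,z)$ satisfies \eqref{eq:odijk} for every $\gamma\in\mathcal C$, which is the first assertion.

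The displayed identity for $\mathcal P_\mathrm{Ch}$ then follows by combining this with \Cref{thm:oneside}: the set on the right-hand side is contained in the set of $(x,z)\in\mathcal P$ satisfying \eqref{eq:odijk} for all $\gamma\in\mathcal C$ by the first assertion, that set equals $\mathcal P_\mathrm{Ch}$ by \Cref{thm:oneside}, and the reverse inclusion is immediate since simple oriented cycles are elements of $\mathcal C$. Alternatively, the statement can be read off \Cref{thm:simple} directly: by \Cref{rem:flip-special} each bound of \eqref{eq:odijk} for a cycle $\delta$ is the flip inequality for $(\delta,F)$ with $F$ the set of backward (resp.\ forward) arcs of $\delta$, and since flipping arcs outside the support of a cycle does not change its flip inequality, the hypothesis here matches, for each cycle appearing in a decomposition, the hypothesis of \Cref{thm:simple}.

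I do not expect a real obstacle here: the entire content is subadditivity of $\lceil\cdot\rceil$, superadditivity of $\lfloor\cdot\rfloor$, and the bookkeeping linking $\gamma_\pm$ to $\sum_k\delta_{k,\pm}$. The one thing not to forget is to use the refined decomposition of \Cref{lem:orient-decomp} rather than an arbitrary one, since cancellation between forward and backward parts would destroy the additivity of the right-hand sides.
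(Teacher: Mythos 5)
Your main argument is correct and takes a genuinely different (and more elementary) route than the paper. The paper states the corollary as a consequence of \Cref{thm:oneside}, \Cref{rem:flip-special}, and \Cref{thm:simple}: cycle inequalities are flip inequalities with $F$ equal to the backward (resp.\ forward) arcs of $\gamma$, and \Cref{thm:simple} reduces flip inequalities to simple cycles. Your primary proof sidesteps the flip machinery entirely and works with \eqref{eq:odijk} directly: the orientation-preserving decomposition of \Cref{lem:orient-decomp} splits $\gamma_\pm$ additively into the $\delta_{k,\pm}$, so summing the lower (resp.\ upper) bounds over $k$ and applying $\sum_k\lceil a_k\rceil\geq\lceil\sum_k a_k\rceil$ (resp.\ $\sum_k\lfloor a_k\rfloor\leq\lfloor\sum_k a_k\rfloor$) gives the bound for $\gamma$ immediately. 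This is shorter and self-contained, while the paper's route reuses \Cref{thm:simple}, which itself required a nontrivial convexity computation with the $\kappa_\delta,\kappa_\varepsilon$ weights. Both are correct.

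One caution on your secondary ``alternative'' reading: \Cref{thm:simple} as \emph{stated} asks for flip inequalities w.r.t.\ a single fixed $F$ for all simple cycles, whereas the hypothesis of the corollary gives the cycle inequality for each simple cycle, i.e.\ the flip inequality for $(\delta,F_\delta)$ with $F_\delta$ depending on $\delta$. Your observation that, thanks to orientation preservation, the sign of $\delta_{k,a}$ agrees with the sign of $\gamma_a$ on the support of $\delta_k$ (so $F_{\delta_k}=F_\gamma\cap\operatorname{supp}\delta_k$) and that arcs outside the support do not affect the flip inequality is exactly the right repair, but it means you are invoking the \emph{argument inside} the proof of \Cref{thm:simple} rather than its literal statement. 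Your first, direct proof does not have this wrinkle and is the cleaner of the two.
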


\subsection{Separation Hardness}

\cite{lindner_determining_2020} outline a pseudo-polynomial time algorithm based on the dynamic program by \cite{borndorfer_separation_2020} that finds a maximally violated flip inequality (if there is any), i.e., a simple cycle $\gamma$ and a set $F \subseteq A$ such that the difference of the right-hand and left-hand sides of \eqref{eq:flip} is maximum. We prove here that pseudo-polynomial time is best possible unless P $=$ NP:
\begin{theorem}
\label{thm:sepa-hardness}
Given $(x, z) \in \mathcal P$ and $M \geq 0$, it is weakly NP-hard to decide whether there exist a simple cycle $\gamma$ and a subset $F \subseteq A$ such that $(x, z)$ violates the flip inequality for $(\gamma, F)$ by at least $M$.
\end{theorem}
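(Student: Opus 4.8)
The plan is to reduce from a weakly NP-hard problem, and the natural candidate is \textsc{Subset Sum} (or \textsc{Partition}), exactly the kind of problem whose hardness is destroyed by pseudo-polynomial algorithms, so that the outcome is consistent with the pseudo-polynomial separation algorithm of \cite{lindner_determining_2020}. The idea is to encode a \textsc{Subset Sum} instance $(a_1, \dots, a_n; b)$ into the choice of the arc set $F$ along a \emph{single fixed} cycle $\gamma$. Concretely, I would build a PESP instance whose digraph $G$ has one big simple cycle through $n$ distinguished arcs $e_1, \dots, e_n$ (plus whatever auxiliary arcs are needed to make it a legitimate preprocessed instance with a cycle basis), choose a period $T$ polynomially related to $\sum_i a_i$, and set the bounds $\ell, u$ and the evaluation point $x$ so that flipping $e_i$ versus not flipping it shifts the quantity $\alpha_{\gamma,F}$ by exactly $a_i$. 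Then $\alpha_{\gamma, F} = \left[ c + \sum_{i \in F'} a_i \right]_T$ for a suitable constant $c$, where $F'$ indexes which of the $e_i$ lie in $F$, and the violation of the flip inequality \eqref{eq:flip} becomes a controllable function of $\alpha_{\gamma,F}$ — maximized (and the threshold $M$ attained) precisely when $\sum_{i \in F'} a_i$ hits the target $b$.

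The key steps, in order, would be: (1) fix the gadget digraph and the cycle $\gamma$, making sure (via \Cref{rem:preprocessing}) that the instance is well-formed and that $\gamma$ is a genuine simple oriented cycle in the cycle space; (2) choose $x = \ell$ (or some other explicit point of $\mathcal P$) on the off-$\{e_1,\dots,e_n\}$ arcs and tune $x_{e_i}, \ell_{e_i}, u_{e_i}$ so that the contribution of $e_i$ to $\alpha_{\gamma, F}$ toggles by $a_i$ between the flipped and non-flipped case, while keeping the left-hand side of \eqref{eq:flip} under control; (3) compute, using \Cref{thm:flip-ineq} and \Cref{rem:flip-special}, the violation $\alpha_{\gamma,F}(T - \alpha_{\gamma,F}) - (\text{LHS of }\eqref{eq:flip})$ as an explicit function of $s \coloneqq \sum_{i \in F'} a_i$, and verify it is a concave-type function of $s$ peaking at $s = b$; (4) set $M$ equal to that peak value and argue that $(x,z)$ violates some flip inequality by at least $M$ if and only if the \textsc{Subset Sum} instance is a yes-instance — using \Cref{thm:simple} to restrict attention to simple cycles, and arguing that cycles other than $\gamma$ (and its sub-cycles) cannot do better, either by making the auxiliary arcs' bounds so slack that no other cycle is violated at $x$, or by a direct case check; (5) observe the reduction is polynomial in the input size of the \textsc{Subset Sum} instance, since $T$ and all the numbers are polynomially bounded.

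The main obstacle I anticipate is step (4)'s ``no other cycle helps'' clause together with the simultaneous control of the left-hand side of \eqref{eq:flip} in step (3): the flip inequality's violation depends not only on $\alpha_{\gamma,F}$ but on the full left-hand side $\sum \gamma_a(\cdot)$, so the gadget must be engineered so that, at the chosen point $x$, the left-hand side is constant (ideally zero, e.g. by putting $x_a = \ell_a$ on forward non-flipped arcs and $x_a = u_a$ on forward flipped arcs, etc.) regardless of which $e_i$ are flipped, leaving the violation a clean function of $\alpha_{\gamma,F}$ alone. Getting the forward/backward structure of $\gamma$ compatible with this — and ensuring the toggling of $F$ changes $\alpha_{\gamma,F}$ additively by $a_i$ without side effects on the left-hand side — is the delicate part; once that is arranged the arithmetic is routine. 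A secondary obstacle is confirming that membership in $\mathcal P$ (i.e.\ $\Gamma x = Tz$ for an integral $z$) holds for the constructed $x$, which just requires choosing $T$ to divide the relevant cycle tensions, but must be checked.
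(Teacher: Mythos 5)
Your high-level strategy is sound and matches the paper's in spirit: reduce from a weakly NP-hard number problem, engineer the PESP instance so that the flip-inequality violation tracks a partition sum, set $M$ at the peak of $\alpha(T-\alpha)$, and use \Cref{thm:simple} to justify restricting to simple cycles. But the concrete route you propose runs into a genuine obstruction that the paper resolves in a different way, and you have correctly identified exactly where it lies.

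The obstruction is in encoding the partition choice into the flip set $F$ along a \emph{fixed} cycle. As you note, toggling an arc $e_i$ in or out of $F$ changes $\alpha_{\gamma,F}$ by $\pm\gamma_{e_i}(u_{e_i}-\ell_{e_i})$, so for a non-trivial toggle you need $u_{e_i} \neq \ell_{e_i}$. But then, whichever way you fix $x_{e_i}$ in advance, at least one of the two $F$-states contributes strictly positively to the left-hand side: with $x_{e_i}=\ell_{e_i}$ the state $e_i\in F$ costs $\alpha(u_{e_i}-\ell_{e_i})$; with $x_{e_i}=u_{e_i}$ the state $e_i\notin F$ costs $(T-\alpha)(u_{e_i}-\ell_{e_i})$; and an intermediate $x_{e_i}$ costs on both sides with the coefficient depending on the unknown $\alpha$. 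So there is no static choice of $x$ that makes the LHS vanish uniformly over all $F$, and the ``clean function of $\alpha_{\gamma,F}$ alone'' you want in step (3) cannot be achieved this way.

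The paper sidesteps this by \emph{not} using $F$ to carry the combinatorial choice. It reduces from Ternary Partition rather than Subset Sum, builds a complete digraph on $2m$ vertices with distinguished arcs $(i^+,i^-)$ having $\ell=c_i$ and $u=T$ (all other arcs having $\ell=u=w=0$), takes $T=\sum_i c_i$, evaluates at $x^*=\ell$, and encodes the ternary choice $a_i\in\{-1,0,1\}$ as the orientation $\gamma_{i^+i^-}$ of the distinguished arc in the chosen cycle, with $F=\emptyset$ in the converse direction. Two devices make this work where your route stalls. First, because $u_{i^+i^-}=T$, putting a distinguished arc into $F$ leaves $\alpha_{\gamma,F}$ unchanged mod $T$ — so $F$-membership is decoupled from $\alpha$, unlike in your plan. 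Second, choosing $M=T^2/4$, the global maximum of $\alpha(T-\alpha)$, forces $\alpha_{\gamma,F}=T/2$ \emph{and} forces the LHS to be exactly zero; with $x^*=\ell$ and $u_{i^+i^-}-\ell_{i^+i^-}=T-c_i>0$, the zero-LHS condition then forces $F$ to avoid the distinguished arcs entirely, so the only remaining freedom is the orientation $\gamma_{i^+i^-}$, which recovers the ternary vector. This also takes care of your step-(4) worry about ``no other cycle helps'' for free: the argument applies to every $(\gamma,F)$ at once, so there is no need to isolate a single gadget cycle — the complete digraph, with all auxiliary arcs satisfying $\ell=u=0$, already ensures those arcs contribute nothing at $x^*=\ell$.

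So: right template, right threshold idea, correctly diagnosed obstacle — but the specific encoding (Subset Sum into $F$ on a fixed cycle) does not go through, and the paper's ternary-orientation encoding together with the $u_a=T$ trick is not a cosmetic variation but the device that makes the reduction work.
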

\begin{proof}
We reduce the weakly NP-hard Ternary Partition Problem \citep{borndorfer_separation_2020}: Given $m \in \mathbb N$ and $c \in \mathbb N^m$, is there $a \in \{-1,0,1\}^m$ such that $\sum_{i=1}^m a_i c_i = \pm \frac{1}{2} \sum_{i=1}^m c_i$?
For a Ternary Partition intance $(m, c)$, we define a PESP instance $(G, T, \ell, u, w)$ as follows: The digraph $G = (V, A)$ is given by a complete directed graph on the vertex set
$$ V\coloneqq \{1^+, 1^-, 2^+, 2^-, \dots, m^+, m^-\},$$ where we delete the arcs $(1^-, 1^+), (2^-, 2^+), \dots, (m^-, m^+)$. We set $T \coloneqq \sum_{i=1}^m c_i$ and 
\begin{align*}
    \ell_{i^+ i^-} \coloneqq c_i,  \quad u_{i^+ i^-} \coloneqq T, \quad w_{i^+ i^-} \coloneqq 1 \quad \text{ for all } i \in \{1, \dots, m\}.
\end{align*}
For all other arcs $a$, we set $\ell_a \coloneqq u_a \coloneqq w_a \coloneqq 0$. As for any PESP instance, the optimal solution to the LP relaxation of $\eqref{eq:mip-cycle}$ is given by $x^* = \ell$.

Suppose now that $x^* = \ell$ violates some flip inequality \eqref{eq:flip} for some simple oriented cycle $\gamma$ and some $F \subseteq A$ by at least $M \coloneqq \frac{T^2}{4}$. Since $x^* = \ell$, only arcs in $F$ contribute non-trivially to the left-hand side of $\eqref{eq:flip}$, moreover, these arcs are all of the form $(i^+, i^-)$. We hence obtain
\begin{equation}
\label{eq:max-viol-cc}
\alpha_{\gamma,F} \sum_{(i^+, i^-) \in F, \gamma_{i^+ i^-} = 1} (T-c_i) + (T- \alpha_{\gamma,F}) \sum_{(i^+, i^-) \in F, \gamma_{i^+ i^-} = -1} (T-c_i) \leq \alpha_{\gamma,F}(T-\alpha_{\gamma,F}) - M,
\end{equation}
where $\alpha_{\gamma,F} = [-\sum_{(i^+, i^-) \notin F} \gamma_{i^+ i^-} c_i]_T$. As the left-hand side of \eqref{eq:flip} is non-negative, we have that $\alpha_{\gamma,F}(T-\alpha_{\gamma,F}) \geq M = T^2/4$, which implies $\alpha_{\gamma,F} = T/2$. Set $a_i \coloneqq -\gamma_{i^+, i^-}$ for all $i \in \{1, \dots, m\}$. Then $[\sum_{i=1}^m a_i c_i]_T = \alpha_{\gamma, F} = T/2$, and as $-T \leq \sum_{i=1}^m a_i c_i \leq T$, we find that $\sum_{i=1}^m a_i c_i = \pm T/2$. In particular, a violated flip inequality leads to a positive answer to the Ternary Partition instance.

Conversely, suppose that there is $a \in \{-1, 0, 1\}^m$ such that $\sum_{i=1}^m a_i c_i = \pm T/2$. Construct a simple oriented cycle $\gamma$ with $\gamma_{i^+ i^-} \coloneqq -a_i$ for all $i \in \{1, \dots, m\}$. Then $\alpha_{\gamma, \emptyset} = T/2$, and the flip inequality for $\gamma$ and $F = \emptyset$ (i.e., the change-cycle inequality for $\gamma$) is violated by at least $T^2/4 = M$, because the left-hand side of $\eqref{eq:max-viol-cc}$ vanishes. 
\end{proof}

In practice, the dynamic program indicated in \citep{lindner_determining_2020} consumes too much memory. It is therefore advantageous to switch to a cut-generating MIP. \cite{balas_optimizing_2008} describe a parametric MIP with a single parameter $\theta \in [0, 1]$ for this purpose. We are however in a better situation: Translating to periodic timetabling via \Cref{thm:split-is-flip}, the parameter $\theta$ essentially corresponds to $\alpha_{\gamma, F}$, which is always an integer between $0$ and $T-1$. This means that the parametric MIP can be replaced by a finite sequence of standard IPs for each such integer $\alpha_{\gamma, F}$. The formulation of the IP \eqref{eq:pmip} is straightforward from the definition \eqref{eq:flip} of flip inequalities:

\begin{theorem}
\label{thm:sepa-parametric-mip}
Let $(x, z) \in \mathcal P \setminus \mathcal P_\mathrm{flip}$ and $\alpha \in \{1, \dots, T$\}. Then a maximally violated flip inequality w.r.t.\ $(x, z)$ with $\alpha_{\gamma, F} = \alpha$ among all oriented cycles $\gamma$ and all $F \subseteq A$ is found by the following integer program:
\begin{equation}
\label{eq:pmip}
\begin{aligned}
    & \text{Minimize} & (T- \alpha)\sum_{a \in A} (x_a - \ell_a) y^+_a +  \alpha \sum_{a \in A}  (x_a - \ell_a) y^-_a \\
    & & + \alpha \sum_{a \in A} (x_a - \ell_a) f^+_a + (T- \alpha) \sum_{a \in A} (u_a - x_a) f^-_a  \\
    & \text{s.t.} & \sum_{a \in A} \ell_a (y^-_a - y^+_a) + \sum_{a \in A} u_a (f^-_a - f^+_a) + k T &= \alpha \\
    & & \sum_{a \in \delta^+(v)} \gamma_a - \sum_{a \in \delta^-(v)} \gamma_a &= 0, & \quad v \in V,\\
    & & f^+ - f^- + y^+ - y^- &= \gamma,\\
    & & 0 \leq f^+ + f^- + y^+ + y^- &\leq 1,\\
    & & f^+, f^-, y^+, y^- &\in \{0, 1\}^A,\\
    & & \gamma &\in \{-1, 0, 1\}^A,\\
    & & k &\in \mathbb Z.
\end{aligned}
\end{equation}
\end{theorem}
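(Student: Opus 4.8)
The plan is to identify the feasible points of the integer program~\eqref{eq:pmip} (for the fixed value $\alpha$) with the pairs $(\gamma, F)$ consisting of an oriented cycle $\gamma \in \mathcal C$ and a set $F \subseteq A$ satisfying $\alpha_{\gamma, F} = \alpha$, and to check that under this identification the objective of~\eqref{eq:pmip} equals the left-hand side of the flip inequality~\eqref{eq:flip} for $(\gamma, F)$. Since the right-hand side of~\eqref{eq:flip} is the constant $\alpha(T-\alpha)$ once $\alpha$ is fixed, minimizing the objective then coincides with maximizing the violation, which is the assertion.

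For the identification I would argue as follows. The conditions $\gamma \in \{-1,0,1\}^A$ and $\sum_{a \in \delta^+(v)} \gamma_a - \sum_{a \in \delta^-(v)} \gamma_a = 0$ for all $v \in V$ say precisely that $\gamma$ is an oriented cycle in $\mathcal C$. For each arc $a$, the constraints $f^+_a - f^-_a + y^+_a - y^-_a = \gamma_a$, $0 \le f^+_a + f^-_a + y^+_a + y^-_a \le 1$ and $f^\pm_a, y^\pm_a \in \{0,1\}$ force all four variables to vanish when $\gamma_a = 0$, exactly one of $f^+_a, y^+_a$ to equal $1$ when $\gamma_a = 1$, and exactly one of $f^-_a, y^-_a$ to equal $1$ when $\gamma_a = -1$. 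Reading $F \coloneqq \{a \in A \mid f^+_a = 1 \text{ or } f^-_a = 1\}$, a feasible $(\gamma, f^\pm, y^\pm)$ thus amounts to an oriented cycle together with a choice of flipped arcs among $\{a : \gamma_a \ne 0\}$, and conversely every such $(\gamma, F)$ arises this way (arcs with $\gamma_a = 0$ do not occur in~\eqref{eq:flip}, so their membership in $F$ is immaterial). Substituting $\gamma_a = y^+_a - y^-_a$ on $A \setminus F$ and $\gamma_a = f^+_a - f^-_a$ on $F$ into the first constraint gives
\[
\sum_{a \in A} \ell_a(y^-_a - y^+_a) + \sum_{a \in A} u_a(f^-_a - f^+_a) \;=\; - \sum_{a \in A \setminus F} \gamma_a \ell_a - \sum_{a \in F} \gamma_a u_a ,
\]
and the residue of the right-hand side in $[0,T)$ is by definition (see \Cref{thm:flip-ineq}) exactly $\alpha_{\gamma, F}$. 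Hence the first constraint of~\eqref{eq:pmip} has a solution $k \in \mathbb Z$ if and only if $\alpha_{\gamma, F} \equiv \alpha \bmod T$, which for $\alpha \in \{1, \dots, T\}$ and $\alpha_{\gamma, F} \in \{0, \dots, T-1\}$ forces $\alpha_{\gamma, F} = \alpha$ (the value $\alpha = T$ picking up the trivial case $\alpha_{\gamma, F} = 0$); if no $(\gamma, F)$ has $\alpha_{\gamma, F} = \alpha$, the program is infeasible and there is nothing to separate for this $\alpha$.

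It then remains to match the objective with~\eqref{eq:flip}. On a feasible point encoding $(\gamma, F)$ with $\alpha_{\gamma, F} = \alpha$, the forward and backward arcs of $A \setminus F$ are $\{a : y^+_a = 1\}$ and $\{a : y^-_a = 1\}$, contributing $(T-\alpha)(x_a-\ell_a)y^+_a$ and $\alpha(x_a-\ell_a)y^-_a$ to the left-hand side of~\eqref{eq:flip}, while the forward and backward arcs of $F$ are $\{a : f^+_a = 1\}$ and $\{a : f^-_a = 1\}$, contributing the two terms weighted by $u_a - x_a$; summing arc by arc reproduces the left-hand side of~\eqref{eq:flip}, and $(x,z) \in \mathcal P$ guarantees $x - \ell \ge 0$ and $u - x \ge 0$, so the objective is a genuine nonnegative linear function. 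Since the right-hand side of~\eqref{eq:flip} is the constant $\alpha(T-\alpha)$, any optimal solution of~\eqref{eq:pmip} is a pair $(\gamma, F)$ with $\alpha_{\gamma, F} = \alpha$ that maximizes the violation of the flip inequality~\eqref{eq:flip}, which is the claim; the assumption $(x,z) \notin \mathcal P_\mathrm{flip}$ only ensures that the overall separation problem is non-vacuous, not that the optimum of~\eqref{eq:pmip} is positive for this particular $\alpha$. I expect no deep difficulty: the argument is bookkeeping, and the only step needing real care is the decoding of the first constraint — verifying that the $\ell$- and $u$-weighted signed sums together with the integer slack $kT$ genuinely realize the operator $[\,\cdot\,]_T$ defining $\alpha_{\gamma, F}$, and that the at-most-one structure of $(f^\pm_a, y^\pm_a)$ splits $\{a : \gamma_a \ne 0\}$ into flipped and unflipped arcs with no overlap and no omission. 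Once this dictionary is in place, matching the objective with~\eqref{eq:flip} is a routine case distinction on the sign of $\gamma_a$ and on whether $a \in F$.
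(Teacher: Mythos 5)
The paper itself gives no proof of this theorem; it dismisses it with the remark that the IP formulation is ``straightforward from the definition of flip inequalities.'' Your proposal is exactly the expansion that remark is hiding, and the overall approach is the right one: encode each candidate $(\gamma,F)$ as a $0/1$-assignment of $(y^\pm,f^\pm)$ along a circulation $\gamma$, decode the first constraint as the congruence $\alpha_{\gamma,F}\equiv\alpha\pmod T$, and then match the objective term-by-term against the left-hand side of the flip inequality~\eqref{eq:flip}. Your analysis of the at-most-one constraints ($\gamma_a=0\Rightarrow$ all four vanish; $\gamma_a=1\Rightarrow$ exactly one of $y^+_a,f^+_a$; $\gamma_a=-1\Rightarrow$ exactly one of $y^-_a,f^-_a$), the substitution $y^-_a-y^+_a=-\gamma_a$ on $A\setminus F$ and $f^-_a-f^+_a=-\gamma_a$ on $F$, and the reduction of the first constraint to $\alpha_{\gamma,F}=\alpha$ for $\alpha\in\{1,\dots,T-1\}$ are all correct. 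The observation that $\alpha=T$ collapses to the trivial case $\alpha_{\gamma,F}=0$ is also a legitimate edge-case note (the paper sidesteps it by only running $\alpha\le\lfloor T/2\rfloor$ in practice).

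There is one point you should have flagged explicitly rather than silently repairing. You write that the $f^+_a$ and $f^-_a$ arcs contribute ``the two terms weighted by $u_a-x_a$,'' which is what the flip inequality requires: the forward arcs of $F$ (encoded by $f^+_a=1$) contribute $\alpha\,\gamma_a(u_a-x_a)$, and the backward arcs of $F$ (encoded by $f^-_a=1$) contribute $(T-\alpha)(-\gamma_a)(u_a-x_a)$. But as printed, the objective in~\eqref{eq:pmip} has the third term $\alpha\sum_{a\in A}(x_a-\ell_a)f^+_a$, with coefficient $x_a-\ell_a$ rather than $u_a-x_a$. Taking~\eqref{eq:pmip} literally, the objective does \emph{not} equal the left-hand side of~\eqref{eq:flip}, so the theorem as written is false; the intended statement needs the third term corrected to $\alpha\sum_{a\in A}(u_a-x_a)f^+_a$. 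Your proof is a proof of the corrected theorem. Since you were asked to verify the statement as given, the omission of this discrepancy is a gap in your writeup, even though the mathematics you carried out is sound.
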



Any feasible solution of \eqref{eq:pmip} with objective value less than $\alpha(T-\alpha)$ will produce a violated flip inequality. Recall from \Cref{rem:flip-special} that flip inequalities with $\alpha_{\gamma, F} = 0$ are trivial and cannot be violated, and that due to symmetry, it is not necessary to consider the IP \eqref{eq:pmip} for $\alpha \geq T/2$.

\subsection{Separation for a Fixed Cycle}
\label{sepa:fixed}

We discuss now how to find a maximally violated flip inequality in linear time when the cycle $\gamma$ is already fixed. To this end, we take the perspective of split cuts. Consider again a mixed-integer set defined by $(A_C, A_I, b)$ and the associated polyhedron $P = \{A_C x + A_I z \leq b\}$. When a split $(\beta, \beta_0)$ is fixed, then the separation problem on $P^{(\beta, \beta_0)}$ can be solved as follows \citep{conforti_integer_2014,bonami_optimizing_2012,balas_lift-and-project_1993}: Given $(x, z) \in P$, check whether $\beta^\top z \leq \beta_0$ or $\beta^\top z \geq \beta_0 + 1$. If yes, then $(x,  z) \in P^{(\beta, \beta_0)}$. Otherwise, solve the linear program
\begin{equation}
    \label{eq:cut-gen}
    \begin{aligned}
        & \text{Minimize} & (s - t)^\top b + \frac{1}{\beta^\top z - \beta_0} \cdot t^\top (b - A_C x - A_I z) \\
        & \text{s.t.} & (s - t)^\top A_C &= 0 \\
        & & (s - t)^\top A_I &= \beta^\top \\
        & & s, t &\geq 0.
    \end{aligned}
\end{equation}
If the value of \eqref{eq:cut-gen} is at least $\beta_0 + 1$, then $(x, z) \in P^{(\beta, \beta_0)}$, otherwise it is not. In the latter case, if we take a basic optimal solution $(s^*, t^*)$, then $(x, z)$ is separated by the split inequality w.r.t.\ $s^* - t^*$. This cut-generating LP \eqref{eq:cut-gen} finds a maximally violated split inequality in the following sense:

\begin{lemma}
\label{lem:max-violation}
Suppose that $(x, z) \in P \setminus P^{(\beta, \beta_0)}$. Let $(s^*, t^*)$ be an optimal basic solution of \eqref{eq:cut-gen}, $\lambda^* \coloneqq s^*-t^*$. Then
\begin{equation}
\label{eq:max-violation}
[\lambda^{*\top} b]_1 (1 - [\lambda^{*\top} b]_1)  -  (1 - [\lambda^{*\top} b]_1) \lambda_+^{*\top} (b - A_Cx - A_Iz) - [\lambda^{*\top} b]_1 \lambda_-^{*\top} (b - A_Cx - A_Iz)
\end{equation}
is maximum among all $\lambda = s - t$ such that $(s, t)$ is feasible for \eqref{eq:cut-gen} and $\lambda^\top b \in [\beta_0, \beta_0 + 1)$.
\end{lemma}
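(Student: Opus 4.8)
The plan is to show that the cut-generating LP \eqref{eq:cut-gen} is, after a change of variables and clearing denominators, precisely an optimization of the expression \eqref{eq:max-violation} over the feasible region described in the statement. First I would set $\lambda = s - t$ and observe that the constraints $(s-t)^\top A_C = 0$, $(s-t)^\top A_I = \beta^\top$, $s, t \geq 0$ encode exactly that $\lambda^\top A_C = 0$ and $\lambda^\top A_I = \beta^\top$, with $\lambda_+ \le s$ and $\lambda_- \le t$ coordinatewise; for a basic optimal solution one has $s = \lambda_+$ and $t = \lambda_-$ up to the usual support argument, so that $s^\top b - t^\top b = \lambda^\top b$ and $t^\top(b - A_C x - A_I z) = \lambda_-^\top (b - A_C x - A_I z)$. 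Thus the objective of \eqref{eq:cut-gen} at such a solution equals $\lambda^\top b + \frac{1}{\beta^\top z - \beta_0}\lambda_-^\top(b - A_C x - A_I z)$.

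Next I would restrict attention to the face of the feasible region where $\lambda^\top b \in [\beta_0, \beta_0+1)$; on this face $\lfloor \lambda^\top b \rfloor = \beta_0$ and $[\lambda^\top b]_1 = \lambda^\top b - \beta_0$. The key identity to exploit is that, since $\lambda^\top A_C = 0$ and $\lambda^\top A_I = \beta^\top$, we have $\lambda^\top(b - A_C x - A_I z) = \lambda^\top b - \beta^\top z$, hence $\lambda_+^\top(b - A_C x - A_I z) = \lambda^\top b - \beta^\top z + \lambda_-^\top(b - A_C x - A_I z)$. Using this to eliminate $\lambda_+^\top(b - A_C x - A_I z)$ in \eqref{eq:max-violation} and substituting $[\lambda^\top b]_1 = \lambda^\top b - \beta_0$, a direct algebraic manipulation should rewrite \eqref{eq:max-violation} as an affine-linear function of the single quantity $v \coloneqq \lambda^\top b + \frac{1}{\beta^\top z - \beta_0}\lambda_-^\top(b - A_C x - A_I z)$ — namely, one expects \eqref{eq:max-violation} to equal $(\beta^\top z - \beta_0)(\beta_0 + 1 - v)$ or a similar expression, which is a strictly decreasing affine function of $v$ because $\beta^\top z - \beta_0 > 0$ on $P \setminus P^{(\beta,\beta_0)}$. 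Therefore minimizing the objective $v$ of \eqref{eq:cut-gen} is equivalent to maximizing \eqref{eq:max-violation}, which is the claim.

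The main obstacle I anticipate is the bookkeeping in the substitution step: one must be careful that the decomposition $s = \lambda_+$, $t = \lambda_-$ really holds at a basic optimal solution (this uses that increasing $\min(s_i, t_i)$ by a common amount keeps feasibility and does not decrease the objective, so some basic optimum has disjoint supports), and one must track the role of the normalization so that the factor $\frac{1}{\beta^\top z - \beta_0}$ in \eqref{eq:cut-gen} matches the division by $[\lambda^\top b]_1$ and $1 - [\lambda^\top b]_1$ in the split inequality \eqref{eq:split}. Once the objective of \eqref{eq:cut-gen} is identified with the monotone transform of \eqref{eq:max-violation}, and once one checks that the constraint $\lambda^\top b \in [\beta_0, \beta_0+1)$ is exactly what makes $[\lambda^\top b]_1 = \lambda^\top b - \beta_0$ and keeps $\lfloor \lambda^\top b \rfloor$ fixed at $\beta_0$ (so that the optimization is over the correct comparison class), the result follows. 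I would close by remarking that a basic optimal $(s^*, t^*)$ automatically yields $\lambda^* = s^* - t^*$ with linearly independent active rows, so $\lambda^* \in \Lambda$ whenever $\lambda^{*\top} b \notin \mathbb{Z}$, giving a genuine split inequality separating $(x,z)$.
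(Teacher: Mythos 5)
Your proposal is correct and is essentially the paper's own proof: both rely on (i) the basic-solution argument to get $s^* = \lambda^*_+$, $t^* = \lambda^*_-$, (ii) the identity $\lambda^\top(b - A_Cx - A_Iz) = \lambda^\top b - \beta^\top z$ to eliminate $\lambda_+^\top(b - A_Cx - A_Iz)$, (iii) the substitution $[\lambda^\top b]_1 = \lambda^\top b - \beta_0$ valid on the stated comparison class, and (iv) the strict positivity of $\beta^\top z - \beta_0$. Your claimed identity that \eqref{eq:max-violation} equals $(\beta^\top z - \beta_0)(\beta_0 + 1 - v)$, where $v$ is the objective value of \eqref{eq:cut-gen}, does check out algebraically, and this is precisely what the paper establishes by a slightly more stepwise rearrangement (adding the constant $(\beta^\top z - \beta_0)(\beta_0+1)$ and then expanding); your phrasing in terms of a monotone affine transform of $v$ is a compact way to express the same computation.
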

\begin{proof}
Since $(s^*, t^*)$ is basic, we have $\lambda^*_+ = s$ and $\lambda^*_- = t$. As $(x, z) \in P \setminus P^{(\beta, \beta_0)}$, $\beta^\top z - \beta_0 > 0$. Then $\lambda^*$ maximizes
$$  - (\beta^\top z - \beta_0) \lambda^\top b - \lambda_-^\top (b - A_C x - A_I z).$$
Adding a constant term, $\lambda^*$ also maximizes
\begin{align*}
&(\beta^\top z - \beta_0) (\beta_0 + 1) - (\beta^\top z - \beta_0) \lambda^\top b - \lambda_-^\top (b - A_C x - A_I z) \\
&= (\beta^\top z - \beta_0) (\beta_0 + 1 -\lambda^\top b) - \lambda_-^\top (b - A_C x - A_I z) .
\end{align*}
Observing that $\lambda^\top (b - A_C x - A_I z) = \lambda^\top b - \beta^\top z$, this is the same as
\begin{align*}
&
= (\lambda^\top b - \lambda^\top(b - A_C x - A_I z) - \beta_0) (\beta_0 + 1 -\lambda^\top b) - \lambda_-^\top (b - A_C x - A_I z) \\
&= (\lambda^\top b - \beta_0) (\beta_0 + 1 - \lambda^\top b) - (\beta_0 + 1 - \lambda^\top b) \lambda^\top(b - A_C x - A_I z) - \lambda_-^\top(b - A_C x - A_I z) \\
&=  (\lambda^\top b - \beta_0) (\beta_0 + 1 - \lambda^\top b) - (\beta_0 + 1 - \lambda^\top b) \lambda_+^\top(b - A_C x - A_I z) \\
& \quad - (\lambda^\top b - \beta_0 ) \lambda_-^\top(b - A_C x - A_I z).
\end{align*}
Since $\lambda^\top b \in [\beta_0, \beta_0 + 1)$, $[\lambda^\top b]_1 = \lambda^\top b - \beta_0$ and $1-[\lambda^\top b]_1 = \beta_0 + 1 - \lambda^\top b$, and we arrive at \eqref{eq:max-violation}.
\end{proof}

Note that the condition $\lambda^\top b \in [\beta_0, \beta_0 +1)$ in \Cref{lem:max-violation} is no restriction, since it suffices to consider $\lambda$ for which  $\lfloor \lambda^\top b \rfloor = \beta_0$ (cf.\ \Cref{sec:split}). We obtain the following in the context of periodic timetabling:

\begin{theorem}
\label{thm:sepa-fixed-cycle}
Let $\mathcal P$ be a fractional periodic timetabling polytope. Let $(x, z) \in \mathcal P$, $\gamma \in \mathcal C$ with $\gamma^\top x \notin T \mathbb Z$, and set $g \coloneqq T/[- \gamma^\top x]_T$. Then the flip inequality w.r.t.\ $\gamma$ and
\begin{align*}
    F \coloneqq \{a \in A \mid \gamma_a > 0 \text{ and } u_a - \ell_a \geq g(u_a - x_a) \} \cup 
    \{a \in A \mid \gamma_a < 0 \text{ and } u_a - \ell_a \leq g(x_a - \ell_a)  \}
\end{align*}
is maximally violated by $(x, z)$ among the flip inequalities w.r.t.\ $\gamma$. In particular, a maximally violated flip inequality w.r.t.\ $\gamma$ can be found in $O(|\gamma|)$ time.
\end{theorem}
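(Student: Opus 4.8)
The plan is to specialize the fixed-split separation machinery of \Cref{lem:max-violation} to the cycle-based formulation. First I would set up the relevant split: since the rows of $\Gamma$ form a $\mathbb Z$-basis of $\mathcal C$, there is a unique $\eta \in \mathbb Z^B$ with $\eta^\top \Gamma = \gamma$, and I take $\beta \coloneqq -\eta \in \mathbb Z^B$, $\beta_0 \coloneqq \lfloor \beta^\top z \rfloor$. For $(x,z) \in \mathcal P$ one has $Tz = \Gamma x$, hence $\beta^\top z = -\gamma^\top x / T$ and $\beta^\top z - \beta_0 = [-\gamma^\top x]_T / T \in (0,1)$, so $1/(\beta^\top z - \beta_0) = g$. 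Inspecting the proof of \Cref{thm:split-is-flip}, the multipliers $\lambda \in \Lambda$ with $\lambda^\top A_I = \beta^\top$ are exactly those tied to the cycle $\gamma$, and they correspond bijectively to the flip inequalities w.r.t.\ $\gamma$ with $\alpha_{\gamma,F} \neq 0$; since $[\lambda^\top b]_1 = \alpha_{\gamma,F}/T$ by \eqref{eq:split-denom}, combining this with the rescaling from \eqref{eq:split} to \eqref{eq:flip} shows that the left-hand side of \eqref{eq:max-violation} equals $T^{-2}$ times the flip-inequality violation at $(\gamma,F)$ — in particular the $F$-dependent normalizations cancel. Hence, by \Cref{lem:max-violation} (whose argument only uses $\beta^\top z - \beta_0 > 0$), a maximally violated flip inequality w.r.t.\ $\gamma$ is read off from a \emph{basic} optimal solution of the cut-generating LP \eqref{eq:cut-gen} for this split.

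Next I would solve that LP explicitly. Writing $\lambda = s - t$ in the four blocks of \eqref{eq:ineq-form} and using $b - A_C x - A_I z = (0, 0, u - x, x - \ell)^\top$ for $(x,z) \in \mathcal P$ together with $1/(\beta^\top z - \beta_0) = g$, the objective of \eqref{eq:cut-gen} turns into
\begin{equation*}
\sum_{a \in A} \Big( (s_{3,a} - t_{3,a}) u_a - (s_{4,a} - t_{4,a}) \ell_a + g\, t_{3,a}(u_a - x_a) + g\, t_{4,a}(x_a - \ell_a) \Big),
\end{equation*}
while the constraints $\lambda^\top A_C = 0$, $\lambda^\top A_I = \beta^\top$ reduce to $(s_{3,a} - t_{3,a}) - (s_{4,a} - t_{4,a}) = -\gamma_a / T$ for every $a$, plus a condition on $s_1, s_2, t_1, t_2$ that does not touch the objective. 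So the LP decouples over the arcs: for $\gamma_a = 0$ the optimal contribution is $0$, and for $\gamma_a \neq 0$ the four variables $s_{3,a}, s_{4,a}, t_{3,a}, t_{4,a}$ have pairwise proportional columns, so in a basic solution exactly one of them is nonzero, which by nonnegativity forces one of the two ``flip-form'' options $\lambda_{3,a} = -\gamma_a/T$ (i.e.\ $a \in F$) or $\lambda_{4,a} = \gamma_a/T$ (i.e.\ $a \notin F$). Using $g \geq 1$ and $\ell \leq x \leq u$ the per-arc objective is bounded below, so its minimum is attained at one of these two vertices, with costs $\tfrac{\gamma_a}{T}(g(u_a - x_a) - u_a)$ vs.\ $-\tfrac{\gamma_a}{T}\ell_a$ when $\gamma_a > 0$, and $-\tfrac{\gamma_a}{T}u_a$ vs.\ $-\tfrac{\gamma_a}{T}(\ell_a + g(x_a - \ell_a))$ when $\gamma_a < 0$.

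Finally I would compare these: dividing by $|\gamma_a|/T > 0$, the choice $a \in F$ wins precisely when $g(u_a - x_a) \leq u_a - \ell_a$ for $\gamma_a > 0$, resp.\ when $g(x_a - \ell_a) \geq u_a - \ell_a$ for $\gamma_a < 0$ — exactly the set $F$ in the statement (ties are harmless, as the two costs then agree). The induced $(s,t)$ is a basic feasible solution corresponding to this $F$, so by the first step the flip inequality w.r.t.\ $(\gamma, F)$ is maximally violated among all flip inequalities w.r.t.\ $\gamma$. For the complexity claim, $g$ is obtained from $\gamma^\top x$ in $O(|\gamma|)$ operations, and deciding membership in $F$ is $O(1)$ per arc in the support of $\gamma$ (arcs with $\gamma_a = 0$ contribute nothing), so $F$ is found in $O(|\gamma|)$ time.

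I expect the bookkeeping in the first step to be the main obstacle: one must check that fixing $\gamma$ pins down the only split $(\beta, \beta_0)$ that can separate $(x,z)$, that the \emph{basic} optimal solutions of \eqref{eq:cut-gen} are precisely the flip-form multipliers (so that optimizing over feasible $(s,t)$ is the same as optimizing over subsets $F$), and — the one identity that is not purely formal — that the $F$-dependent normalizations $\alpha_{\gamma,F}(T - \alpha_{\gamma,F})$ and $[\lambda^\top b]_1(1 - [\lambda^\top b]_1)$ cancel up to the global constant $T^2$, so that maximizing the quantity in \eqref{eq:max-violation} indeed maximizes the flip-inequality violation. Everything past that point is the elementary per-arc optimization above.
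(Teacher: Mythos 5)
Your proposal is correct and follows essentially the same route as the paper's proof: both specialize the cut-generating LP \eqref{eq:cut-gen} via \Cref{lem:max-violation} to the split determined by $\beta^\top\Gamma = -\gamma$, eliminate the $(s_1,s_2,t_1,t_2)$ block to obtain a per-arc separable LP with constraint $s_3 - t_3 - s_4 + t_4 = -\gamma/T$, and read off $F$ from the pointwise comparison of the two vertex costs on each arc; the normalization identities you flag at the end are exactly handled in the paper by \eqref{eq:split-denom} and \eqref{eq:split-num} and the $T^2$ rescaling.
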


\begin{proof}
We first write down the cut-generating LP \eqref{eq:cut-gen} for the PESP situation \eqref{eq:ineq-form}:

\begin{align*}
    & \text{Minimize} & s_3^\top u - t_3^\top u - s_4^\top \ell + t_4^\top \ell + \frac{1}{\beta^\top z - \beta_0} \cdot (t_3^\top (u - x) + t_4^\top(x - \ell)) \\
    & \text{s.t.} & (s_1 - t_1 - s_2 + t_2)^\top \Gamma + s_3^\top - t_3^\top - s_4^\top + t_4^\top &= 0, \\
    & &  - s_1 + t_1 + s_2 - t_2 &= \frac{\beta}{T},\\
    & & s_1, s_2, s_3, s_4, t_1, t_2, t_3, t_4 &\geq 0.
\end{align*}
Recall from \Cref{thm:split-is-flip} that a flip inequality w.r.t.\ $\gamma$ corresponds to a split inequality derived from $P^{(\beta, \beta_0)}$ with $\beta^\top \Gamma = -\gamma$. Since $(x, z) \in \mathcal P$, we have $\beta_0 = \lfloor \beta^\top z \rfloor = \lfloor -\gamma^\top x/T \rfloor$.
Eliminating the variables $s_1, s_2, t_1, t_2$, and setting
$$ g \coloneqq \frac{1}{\beta^\top z - \beta_0} = \frac{1}{[ \beta^\top z]_1 } =  \frac{1}{[ \beta^\top \Gamma x/T ]_1}= \frac{1}{[-\gamma^\top x/T]_1} = \frac{T}{[-\gamma^\top x]_T},$$
this becomes
\begin{align*}
    & \text{Minimize} & s_3^\top u - t_3^\top u - s_4^\top \ell + t_4^\top \ell + g \cdot (t_3^\top (u - x) + t_4^\top(x - \ell)) \\
    & \text{s.t.} & s_3 - t_3 - s_4 + t_4 &= -\frac{\gamma}{T}, \\
    & & s_3, s_4, t_3, t_4 &\geq 0.
\end{align*}
This linear program is trivial to solve: In each basic solution, for each arc $a \in A$ at most one of $s_{3,a}, s_{4,a}, t_{3,a}, t_{4,a}$ will be non-zero, and $s_{3,a} = s_{4,a} = t_{3,a} = t_{4,a} = 0$ for all $a \in A$ with $\gamma_a = 0$. We examine the contribution to the objective for each arc $a$ in $\gamma$ in such a basic solution:

If $\gamma_a > 0$, then either $t_{3,a}> 0$ or $s_{4,a} > 0$. In the first case, the contribution to the objective is $ \gamma_a(g(u_a - x_a) - u_a)/T $, otherwise $-\gamma_a\ell_a/T$. Otherwise, if $\gamma_a < 0$, then either $s_{3,a} > 0$ or $t_{4,a} > 0$, the contribution being $-\gamma_a u_a/T$ resp.\ $ -\gamma_a (\ell_a + g(x_a - \ell_a))/T $. In particular, an optimal solution is given by
\begin{align*}
    t_{3,a} &\coloneqq \frac{\gamma_a}{T} & \text{ for all } a \text{ s.t. } \gamma_a > 0 \text{ and } -\ell_a \geq g(u_a - x_a) - u_a,\\
    s_{4,a} &\coloneqq \frac{\gamma_a}{T} & \text{ for all } a \text{ s.t. } \gamma_a > 0 \text{ and } -\ell_a < g(u_a - x_a) - u_a,\\
    s_{3,a} &\coloneqq -\frac{\gamma_a}{T} & \text{ for all } a \text{ s.t. } \gamma_a < 0 \text{ and } u_a \leq g(x_a - \ell_a) + \ell_a,\\
    t_{4,a} &\coloneqq -\frac{\gamma_a}{T} & \text{ for all } a \text{ s.t. } \gamma_a < 0 \text{ and } u_a > g(x_a - \ell_a) + \ell_a,
\end{align*}
and $s_{3,a} \coloneqq s_{4,a} \coloneqq t_{3,a} \coloneqq t_{4,a} \coloneqq 0$ otherwise. The cut derived from this solution is the split inequality for $\lambda = s - t$, which by \Cref{thm:split-is-flip} corresponds to the flip inequality for $\gamma$ and 
\begin{align*}
    F &= \{a \in A \mid \lambda_{3,a} \neq 0\} \\
    &= \{a \in A \mid \gamma_a > 0 \text{ and } u_a - \ell_a \geq g(u_a - x_a) \} \cup 
    \{a \in A \mid \gamma_a < 0 \text{ and } u_a - \ell_a \leq g(x_a - \ell_a)  \}.
\end{align*}
Observe that by \eqref{eq:split-denom}, $T[\lambda^\top b]_1 = \alpha_{\gamma, F}$. Using \eqref{eq:split-num} and multiplying \eqref{eq:max-violation} with $T^2$ therefore yields the violation of the flip inequality w.r.t.\ $(\gamma, F)$. By \Cref{lem:max-violation}, we conclude that the violation is indeed maximal.
\end{proof}

\section{Comparing Split Closures}
\label{sec:comparing}

Recall that the Periodic Event Scheduling Problem can be formulated in two ways as a MIP, where the incidence-based formulation \eqref{eq:mip-incidence} is essentially a special case of the cycle-based formulation \eqref{eq:mip-cycle} by virtue of \Cref{rem:incidence-is-cycle}. The methods of \Cref{sec:split} therefore apply to both formulations, and the question arises whether one of the two split closures is stronger. We will show that both closures are in fact of the same strength in \Cref{ss:projection}.

Typically, the integer variables in both formulations are general. However, under certain circumstances, the periodic offset variables $p_a$ in \eqref{eq:mip-incidence} can be assumed to be binary \citep{liebchen_periodic_2006}. We will discuss \Cref{ss:binarization} how to achieve binary variables by a subdivision procedure. We will show that this binarization approach does not lead to a stronger split closure.

We show in \Cref{ss:product} that split closures commute with Cartesian products, which means in the PESP situation that the split closures can be considered on blocks of $G$ individually.

However, to be able to compare split closures of different polyhedra, we need to develop a few technicalities first in \Cref{ss:mip-compatible-maps}. 

\subsection{Mixed-Integer-Compatible Maps}
\label{ss:mip-compatible-maps}

We begin with two mixed-integer sets
$$ S_i := \{(x,z) \in \mathbb R^{n_i} \times \mathbb Z^{p_i} \mid A_C^i x + A_I^i z \leq b^i\}, \quad i \in \{1,2\},$$
and the associated polyhedra
\begin{align*}
    P_i &:= \{(x,z) \in \mathbb R^{n_i} \times \mathbb R^{p_i} \mid A_C^i x + A_I^i z \leq b^i\},  &
    (P_{i})_\mathrm{I} &:= \conv(S_i), &\quad i \in \{1,2\}.
\end{align*}

\begin{definition}
A map $\varphi: \mathbb R^{n_1} \times \mathbb R^{p_1} \to  \mathbb R^{n_2} \times \mathbb R^{p_2}$ is \emph{mixed-integer-compatible} if $\varphi$ is affine and $\varphi(\mathbb R^{n_1} \times \mathbb Z^{p_1}) \subseteq \mathbb R^{n_2} \times \mathbb Z^{p_2}$.
\end{definition}

In particular, if $\varphi(P_1) \subseteq P_2$ for a mixed-integer-compatible map $\varphi$, then $\varphi(S_1) \subseteq S_2$ and $\varphi((P_1)_\mathrm{I}) \subseteq (P_2)_\mathrm{I}$.

\begin{lemma}
\label{lem:mixed-integer-compatible-dual-map}
    Let $\psi: \mathbb R^{n_1} \times \mathbb R^{p_1} \to  \mathbb R^{n_2} \times \mathbb R^{p_2}$ be a linear map and let $\psi^*: \mathbb R^{n_2} \times \mathbb R^{p_2} \to  \mathbb R^{n_1} \times \mathbb R^{p_1}$ be the corresponding dual linear map, identifying dual vector spaces choosing standard bases. Then the following are equivalent:
    \begin{enumerate}[\normalfont (1)]
        \item $\psi$ is mixed-integer-compatible.
        \item $\psi(\mathbb R^{n_1} \times \{0\}) \subseteq \mathbb R^{n_2} \times \{0\}$ and $\psi^*(\{0\} \times \mathbb Z^{p_2}) \subseteq \{0\} \times \mathbb Z^{p_1}$.
    \end{enumerate}
\end{lemma}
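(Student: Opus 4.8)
The plan is to translate everything into block-matrix form. Write $\psi$ with respect to the splittings of domain and codomain into their continuous and integer coordinates as
$$\psi(x, z) = \bigl( M_{xx} x + M_{xz} z,\; M_{zx} x + M_{zz} z \bigr), \qquad (x, z) \in \mathbb{R}^{n_1} \times \mathbb{R}^{p_1},$$
where $M_{zx} \in \mathbb{R}^{p_2 \times n_1}$ and $M_{zz} \in \mathbb{R}^{p_2 \times p_1}$ are the two blocks mapping into the integer coordinates of the codomain. Identifying dual spaces with standard bases as in the statement, $\psi^*$ is given by the transposed block matrix, so $\psi^*(0, q) = (M_{zx}^\top q,\; M_{zz}^\top q)$ for $q \in \mathbb{R}^{p_2}$. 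Since $\psi$ is linear it is in particular affine, so mixed-integer-compatibility of $\psi$ is precisely the condition that $M_{zx} x + M_{zz} z \in \mathbb{Z}^{p_2}$ for all $x \in \mathbb{R}^{n_1}$ and $z \in \mathbb{Z}^{p_1}$.

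I would then show that (1) holds if and only if $M_{zx} = 0$ and $M_{zz}$ has integer entries. For the ``only if'' direction: putting $z = 0$ gives $M_{zx} x \in \mathbb{Z}^{p_2}$ for every $x$; the image $\{M_{zx} x \mid x \in \mathbb{R}^{n_1}\}$ is a linear subspace of $\mathbb{R}^{p_2}$ contained in the discrete set $\mathbb{Z}^{p_2}$, hence equals $\{0\}$, so $M_{zx} = 0$; then plugging the standard basis vectors $e_1, \dots, e_{p_1}$ of $\mathbb{Z}^{p_1}$ into $z$ shows that each column of $M_{zz}$ lies in $\mathbb{Z}^{p_2}$. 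The ``if'' direction is immediate: $M_{zx} = 0$ and $M_{zz}$ integral force $M_{zx} x + M_{zz} z = M_{zz} z \in \mathbb{Z}^{p_2}$ whenever $z \in \mathbb{Z}^{p_1}$.

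Next I would check that (2) is equivalent to the same pair of conditions. The first half of (2) says $M_{zx} x = 0$ for all $x$, i.e.\ $M_{zx} = 0$. The second half says $M_{zx}^\top q = 0$ and $M_{zz}^\top q \in \mathbb{Z}^{p_1}$ for every $q \in \mathbb{Z}^{p_2}$; running $q$ over the standard basis, the first of these is again $M_{zx} = 0$, and the second says every row of $M_{zz}$ is integral, i.e.\ $M_{zz} \in \mathbb{Z}^{p_2 \times p_1}$. Chaining the two equivalences yields (1) $\Leftrightarrow$ (2).

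The only step that is more than bookkeeping is the discreteness argument turning ``$M_{zx} x \in \mathbb{Z}^{p_2}$ for all $x$'' into ``$M_{zx} = 0$'', which relies on the fact that a nonzero linear subspace of $\mathbb{R}^{p_2}$ always contains points arbitrarily close to, but distinct from, the origin and hence cannot lie inside the lattice $\mathbb{Z}^{p_2}$. Everything else is careful handling of the block decomposition and of the convention for the dual map, so I do not anticipate any real difficulty.
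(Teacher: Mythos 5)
Your proof is correct and uses essentially the same ideas as the paper's: the discreteness argument (a linear subspace of $\mathbb R^{p_2}$ contained in $\mathbb Z^{p_2}$ must be trivial) and coordinate-by-coordinate checking against standard basis vectors. The only stylistic difference is that you factor both (1) and (2) through the explicit intermediate characterization $M_{zx} = 0$, $M_{zz} \in \mathbb Z^{p_2 \times p_1}$, whereas the paper proves each implication directly; your block-matrix organization makes the symmetry of the two conditions somewhat more transparent, but the substance is the same.
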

\begin{proof}
    $(1) \Rightarrow (2)$: For the first statement consider for $i \in [n_1]$ the $i$-th standard basis vector $e_i \in \mathbb R^{n_1}$. Then $\psi(e_i, 0) = (x, z)$ for some  $x \in \mathbb R^{n_2}$ and $z \in \mathbb R^{p_2}$. But as $\psi$ is linear and mixed-integer-compatible, $\psi(\lambda e_i, 0) = (\lambda x, \lambda z)$ with $\lambda z \in \mathbb Z^{p_2}$ for all $\lambda \in \mathbb R$, so that $z = 0$.

    For the second statement, consider for $j \in [p_2]$ the $j$-th standard basis vector $e_j$. Then for $i \in [n_1]$, the $i$-th coordinate of $\psi^*(0, e_j)$ is given by $(0, e_j)^\top \psi(e_i, 0) = 0$ by the first statement. For $i \in [p_1]$, the $(n_1 + i)$-th coordinate of $\psi^*(0, e_j)$ is given by $(0, e_j)^\top \psi(0, e_i)$, which is integral as $\psi$ is mixed-integer-compatible.
    
    $(2) \Rightarrow (1)$: Let $(x, z) \in \mathbb R^{n_1} \times \mathbb Z^{p_1}$. Then $\psi(x, z) = \psi(x, 0) + \psi(0, z)$, so using linearity and the first statement in (2), it suffices to consider $\psi(0, e_i)$ for $i \in [p_1]$. But now for $j \in [p_2]$, the $(n_1 + i)$-th coordinate of $\psi^*(0, e_j)$ is integral by the second statement in (2), and since it is given by $(0, e_j)^\top \psi(0, e_i)$, we conclude  that the $(n_2 + j)$-th coordinate of $\psi(0, e_i)$ is integer. Consequently, $\psi$ must be mixed-integer-compatible.    
\end{proof}

The following is a generalization of Theorem~1 in \citep{dash_binary_2018}.
\begin{theorem}
\label{thm:split-descendance}
    Let $\varphi$ be a mixed-integer-compatible map with $\varphi(P_1) \subseteq P_2$. Then $\varphi((P_1)_\mathrm{split}) \subseteq (P_2)_\mathrm{split}$.
\end{theorem}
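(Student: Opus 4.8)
The plan is to reduce the statement to the inclusion $\varphi\big(P_1^{(\beta_1,\beta_{1,0})}\big) \subseteq P_2^{(\beta_2,\beta_{2,0})}$ for a suitable split on the right obtained from a given split on the left, and then intersect over all splits. Concretely, fix a split $(\beta_2,\beta_{2,0}) \in \mathbb Z^{p_2}\times\mathbb Z$ on $P_2$; it suffices to show $\varphi\big((P_1)_\mathrm{split}\big) \subseteq P_2^{(\beta_2,\beta_{2,0})}$, since $(P_2)_\mathrm{split}$ is the intersection of these polyhedra over all $(\beta_2,\beta_{2,0})$. Write $\varphi(x,z) = \psi(x,z) + c$ with $\psi$ linear and $c \in \mathbb R^{n_2}\times\mathbb R^{p_2}$ a fixed translation vector; because $\varphi$ is mixed-integer-compatible and maps $\mathbb R^{n_1}\times\{0\}$ into $\mathbb R^{n_2}\times\{0\}$ (translation of the origin witnesses $c$), one checks that $c$ itself lies in $\mathbb R^{n_2}\times\mathbb Z^{p_2}$ and that $\psi$ is a mixed-integer-compatible linear map, so \Cref{lem:mixed-integer-compatible-dual-map} applies to $\psi$.

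The key computation is to pull the split $(\beta_2,\beta_{2,0})$ back through $\varphi$. Let $\pi_2:\mathbb R^{n_2}\times\mathbb R^{p_2}\to\mathbb R^{p_2}$ be the projection onto the integer block, so the linear functional defining the split is $(x,z)\mapsto \beta_2^\top \pi_2(\psi(x,z)+c) = \beta_2^\top\pi_2(\psi(x,z)) + \beta_2^\top\pi_2(c)$. Now $\beta_2^\top\pi_2(\psi(\,\cdot\,))$ is a linear functional on $\mathbb R^{n_1}\times\mathbb R^{p_1}$; by \Cref{lem:mixed-integer-compatible-dual-map}, its restriction kills $\mathbb R^{n_1}\times\{0\}$ (because $\psi^*(\{0\}\times\mathbb Z^{p_2})\subseteq\{0\}\times\mathbb Z^{p_1}$, and in particular $\psi^*(0,\beta_2)$ has zero components on the first $n_1$ coordinates and integer components on the last $p_1$). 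Hence there is $\beta_1 \in \mathbb Z^{p_1}$ with $\beta_2^\top\pi_2(\psi(x,z)) = \beta_1^\top z$ for all $(x,z)$, namely $\beta_1$ is the integer-block part of $\psi^*(0,\beta_2)$. Moreover $\beta_2^\top\pi_2(c) \in \mathbb Z$ since $c$ has integer integer-block, so setting $\beta_{1,0} := \beta_{2,0} - \beta_2^\top\pi_2(c) \in \mathbb Z$ we get the pointwise identity
\begin{equation*}
\beta_2^\top\pi_2(\varphi(x,z)) \le \beta_{2,0} \iff \beta_1^\top z \le \beta_{1,0},
\qquad
\beta_2^\top\pi_2(\varphi(x,z)) \ge \beta_{2,0}+1 \iff \beta_1^\top z \ge \beta_{1,0}+1 .
\end{equation*}

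With this in hand the inclusion follows by a standard convexity argument. Take $(x,z)\in(P_1)_\mathrm{split} \subseteq P_1^{(\beta_1,\beta_{1,0})}$, so $(x,z)$ is a convex combination of points in $P_1\cap\{\beta_1^\top z\le\beta_{1,0}\}$ and points in $P_1\cap\{\beta_1^\top z\ge\beta_{1,0}+1\}$. Since $\varphi$ is affine with $\varphi(P_1)\subseteq P_2$, it maps these two faces into $P_2$, and by the displayed equivalence it maps the first into $P_2\cap\{\beta_2^\top\pi_2\le\beta_{2,0}\}$ and the second into $P_2\cap\{\beta_2^\top\pi_2\ge\beta_{2,0}+1\}$; affinity sends the convex combination to the corresponding convex combination of images, so $\varphi(x,z)\in P_2^{(\beta_2,\beta_{2,0})}$. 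Intersecting over all $(\beta_2,\beta_{2,0})$ gives $\varphi\big((P_1)_\mathrm{split}\big)\subseteq(P_2)_\mathrm{split}$. The only genuinely delicate point is the bookkeeping in the pullback step — verifying that the linear functional $(x,z)\mapsto\beta_2^\top\pi_2(\psi(x,z))$ really is of the form $\beta_1^\top z$ with $\beta_1$ \emph{integral}; this is exactly where the two conditions of \Cref{lem:mixed-integer-compatible-dual-map} are both needed (the first to kill the $x$-dependence, the second for integrality), so I would present that verification carefully and treat the rest as routine.
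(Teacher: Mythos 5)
Your proof is correct and follows essentially the same route as the paper: decompose $\varphi = \psi + \varphi(0,0)$, apply \Cref{lem:mixed-integer-compatible-dual-map} to pull $\beta_2$ back to an integral $\beta_1 = $ integer-block of $\psi^*(0,\beta_2)$, and conclude by affineness; the only cosmetic difference is that you work with the disjunction form $P^{(\beta,\beta_0)}$ and track $\beta_{1,0}$ explicitly, while the paper uses the equivalent formulation $\bigcap_\beta \conv\{(x,z)\in P \mid \beta^\top z \in\mathbb Z\}$ and thus never needs $\beta_0$. One small slip: you write that $\varphi$ maps $\mathbb R^{n_1}\times\{0\}$ into $\mathbb R^{n_2}\times\{0\}$, which is false for an affine MI-compatible map (it is the linear part $\psi$ that has this property, by the lemma); fortunately what you actually use downstream — that $\varphi(0,0)$ has integral integer-block and hence $\psi = \varphi-\varphi(0,0)$ is MI-compatible — is derived correctly and does not depend on that claim.
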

\begin{proof}
    Consider $(x_1, z_1) \in (P_1)_\mathrm{split}$ and $\beta_2 \in \mathbb Z^{p_2}$. We need to show that $\varphi(x_1, z_1)$ is a convex combination of points $(x_2^i, z_2^i) \in P_2$ with $\beta_2^\top z_2^i$ integral.   
    Since $\varphi$ is mixed-integer-compatible, the last $p_2$ entries of $\varphi(0, 0)$ are integral, and so the linear map $\psi := \varphi - \varphi(0,0)$ is mixed-integer-compatible as well. By~\Cref{lem:mixed-integer-compatible-dual-map}, $\psi^*(0, \beta_2) = (0, \beta_1)$ for some $\beta_1 \in \mathbb Z^{p_1}$. Since $(x_1, z_1) \in (P_1)_\mathrm{split}$, it is a convex combination of $(x_1^i, z_1^i) \in P_1$ with $\beta_1^\top z_1^i \in \mathbb Z$. Write
    $$(x_2^i, z_2^i) := \varphi(x_1^i, z_1^i) = \psi(x_1^i, z_1^i) + \varphi(0, 0) \in P_2.$$
    Then
    \begin{align*}
        \beta_2^\top z_2^i = (0, \beta_2)^\top (x_2^i, z_2^i) &= (0, \beta_2)^\top \psi(x_1^i, z_1^i) + (0, \beta_2)^\top \varphi(0, 0) \\
        &= \psi^*(0, \beta_2)^\top (x_1^i, z_1^i) +  (0, \beta_2)^\top \varphi(0, 0) \\
        &=  (0, \beta_1)^\top (x_1^i, z_1^i) +  (0, \beta_2)^\top \varphi(0, 0)  \\
        &= \beta_1^\top z_1^i + (0, \beta_2)^\top \varphi(0, 0) \\
        &\in \mathbb Z.
    \end{align*}
     As $\varphi$ is affine and hence preserve convex combinations, $\varphi(x_1, z_1)$ is a convex combination of the $(x_2^i, z_2^i) \in P_2$.
\end{proof}

\begin{example}
An example for a mixed-integer-compatible map is provided by the change of the cycle basis in the context of periodic timetabling. Let $I = (G, T, \ell, u, w)$ be a PESP instance and let $\Gamma, \Gamma'$ be two cycle matrices of integral cycle bases of $G$. As in \Cref{rem:basis-indep}, there is an unimodular matrix $U$ such that $\Gamma' = U\Gamma$. The map $\varphi: (x, z) \mapsto (x, Uz)$ maps the fractional periodic timetabling polytope $\mathcal P_1$ defined by $\Gamma$ to the fractional periodic timetabling polytope $\mathcal P_2$ defined by $\Gamma'$. The map $\varphi$ is clearly linear and maps mixed-integer points to mixed-integer points, so that $\varphi$ is mixed-integer-compatible by definition. We conclude that $\varphi((\mathcal P_1)_\mathrm{split}) \subseteq \mathcal (P_2)_\mathrm{split}$. Since $U$ is unimodular, $\varphi$ has a mixed-integer compatible inverse, so that $\varphi$ provides a ``mixed-integer'' isomorphism of $(P_1)_\mathrm{split}$ with $(P_2)_\mathrm{split}$.
\end{example}

\subsection{Split Closure of Cartesian Products}
\label{ss:product}

As first application of mixed-integer-compatible maps, we prove that split closures are compatible with Cartesian products.

\begin{theorem}
\label{thm:split-product}
    Consider two mixed-integer sets
    $$ S_i = \{(x,z) \in \mathbb R^{n_i} \times \mathbb Z^{p_i} \mid A_C^i x + A_I^i z \leq b^i\}, \quad i \in \{1,2\},$$
    and the associated polyhedra
    $$ P_i := \{(x,z) \in \mathbb R^{n_i} \times \mathbb R^{p_i} \mid A_C^i x + A_I^i z \leq b^i\}, \quad i \in \{1, 2\}.$$
    Then $(P_1 \times P_2)_\mathrm{split} = (P_1)_\mathrm{split} \times (P_2)_\mathrm{split}$.
\end{theorem}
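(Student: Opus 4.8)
The plan is to prove the two inclusions separately, using mixed-integer-compatible maps in both directions. The easy inclusion is $(P_1)_\mathrm{split} \times (P_2)_\mathrm{split} \subseteq (P_1 \times P_2)_\mathrm{split}$. For this, fix a split $(\beta, \beta_0) \in (\mathbb Z^{p_1} \times \mathbb Z^{p_2}) \times \mathbb Z$ for the product, write $\beta = (\beta^1, \beta^2)$, and take $(x_1, z_1, x_2, z_2)$ with $(x_i, z_i) \in (P_i)_\mathrm{split}$. Each $(x_i, z_i)$ is a convex combination $\sum_k \mu^i_k (x_i^k, z_i^k)$ of points in $P_i$ with $(\beta^i)^\top z_i^k \in \mathbb Z$. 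Taking the product convex combination $\sum_{k,l} \mu^1_k \mu^2_l (x_1^k, z_1^k, x_2^l, z_2^l)$ expresses $(x_1,z_1,x_2,z_2)$ as a convex combination of points in $P_1 \times P_2$ on which $\beta^\top z = (\beta^1)^\top z_1^k + (\beta^2)^\top z_2^l \in \mathbb Z$. Hence the point lies in $(P_1\times P_2)^{(\beta,\beta_0)}$; intersecting over all splits gives the inclusion. (Alternatively, one can phrase this via \Cref{thm:split-descendance} applied to the two projections $\pi_i : P_1 \times P_2 \to P_i$, but those go the wrong direction for this inclusion, so the direct argument above is cleaner.)

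For the reverse inclusion $(P_1 \times P_2)_\mathrm{split} \subseteq (P_1)_\mathrm{split} \times (P_2)_\mathrm{split}$, I would use \Cref{thm:split-descendance} with the two coordinate projections. The map $\varphi_1 : \mathbb R^{n_1}\times\mathbb R^{p_1}\times\mathbb R^{n_2}\times\mathbb R^{p_2} \to \mathbb R^{n_1}\times\mathbb R^{p_1}$, $(x_1,z_1,x_2,z_2) \mapsto (x_1,z_1)$, is linear and clearly maps mixed-integer points to mixed-integer points, hence is mixed-integer-compatible; moreover $\varphi_1(P_1 \times P_2) \subseteq P_1$ (in fact $= P_1$ if $P_2 \neq \emptyset$, but only $\subseteq$ is needed). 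By \Cref{thm:split-descendance}, $\varphi_1((P_1\times P_2)_\mathrm{split}) \subseteq (P_1)_\mathrm{split}$, and symmetrically $\varphi_2((P_1\times P_2)_\mathrm{split}) \subseteq (P_2)_\mathrm{split}$. Since a point of $P_1 \times P_2$ is determined by its two projections, $(P_1\times P_2)_\mathrm{split} \subseteq \varphi_1(\cdot)\times\varphi_2(\cdot) \subseteq (P_1)_\mathrm{split}\times(P_2)_\mathrm{split}$.

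There is no serious obstacle here; the only points requiring a little care are the edge cases where one of $P_1$, $P_2$ is empty (then $P_1 \times P_2$ is empty and both sides are empty, so the statement holds trivially — worth a one-line remark), and making sure the product-of-convex-combinations argument in the first inclusion is stated with both indices ranging over the respective decompositions. I would present the first inclusion with the explicit double sum and the second as a two-line application of \Cref{thm:split-descendance}.
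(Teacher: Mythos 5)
Your proof is correct and takes essentially the same approach as the paper: the forward inclusion via decomposing a product split $(\beta^1,\beta^2)$ into coordinate splits and taking the product convex combination (the paper phrases this as $\conv(A)\times\conv(B)=\conv(A\times B)$ inside nested intersections, but it is the same argument), and the reverse inclusion via the two coordinate projections and \Cref{thm:split-descendance}.
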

\begin{proof}
    We first prove $(P_1)_\mathrm{split} \times (P_2)_\mathrm{split}  \subseteq (P_1 \times P_2)_\mathrm{split}$ using the characterization \eqref{eq:split-def}:
    \begin{align*}
        &(P_1)_\mathrm{split} \times (P_2)_\mathrm{split} \\
        &= \left( \bigcap_{\beta_1 \in \mathbb Z^{p_1}} \conv(\{(x_1,z_1) \in P_1 \mid \beta_1^\top z_1 \in \mathbb Z\}) \right) \times \left( \bigcap_{\beta_2 \in \mathbb Z^{p_2}} \conv(\{(x_2,z_2) \in P_2 \mid \beta_2^\top z_2 \in \mathbb Z\}) \right) \\
        &= \bigcap_{\beta_1 \in \mathbb Z^{p_1}}  \bigcap_{\beta_2 \in \mathbb Z^{p_2}} \left( \conv(\{(x_1,z_1) \in P_1 \mid \beta_1^\top z_1 \in \mathbb Z\}) \times \conv(\{(x_2,z_2) \in P_2 \mid \beta_2^\top z_2 \in \mathbb Z\}) \right) \\
        &= \bigcap_{(\beta_1, \beta_2) \in \mathbb Z^{p_1} \times \mathbb Z^{p_2}}   \conv(\{(x_1,z_1) \in P_1 \mid \beta_1^\top z_1 \in \mathbb Z\} \times \{(x_2,z_2) \in P_2 \mid \beta_2^\top z_2 \in \mathbb Z\}) \\
        &= \bigcap_{(\beta_1, \beta_2) \in \mathbb Z^{p_1} \times \mathbb Z^{p_2}}   \conv(\{(x_1,z_1,x_2,z_2) \in P_1 \times P_2 \mid \beta_1^\top z_1 \in \mathbb Z, \beta_2^\top z_2 \in \mathbb Z\}) \\
        &\subseteq \bigcap_{(\beta_1, \beta_2) \in \mathbb Z^{p_1} \times \mathbb Z^{p_2}}   \conv(\{(x_1,z_1,x_2,z_2) \in P_1 \times P_2 \mid (\beta_1, \beta_2)^\top (z_1, z_2) \in \mathbb Z\}) \\
        &= (P_1 \times P_2)_\mathrm{split}.
    \end{align*}
    To show the reverse inclusion, we consider the natural projections $\varphi_i: P_1 \times P_2 \to P_i$ for $i \in \{1, 2\}$. Both $\varphi_i$ are mixed-integer-compatible, so that by \Cref{thm:split-descendance}, $\varphi_i((P_1 \times P_2)_\mathrm{split}) \subseteq (P_i)_{\mathrm{split}}$. In particular, the map $(\varphi_1, \varphi_2)$, which is the identity map, maps $(P_1 \times P_2)_\mathrm{split}$ into $(P_1)_\mathrm{split} \times (P_2)_\mathrm{split}$. 
\end{proof}

We apply now \Cref{thm:split-product} to periodic timetabling. Consider for an arbitrary digraph $G$ its decomposition into blocks. Since each cycle is part of a unique block, the cycle space of $G$ decomposes into the direct sum of the cycle spaces of its blocks. This has the consequence that any cycle matrix $\Gamma$ of $G$ has a block structure as well, so that the fractional periodic timetabling polytope $\mathcal P$ is the Cartesian product of the fractional periodic timetabling polytopes associated to the subinstances of each block.

\begin{theorem}[cf.\ \citealp{lindner_determining_2020}]
\label{thm:split-block}
    If $G_1, \dots, G_k$ are the blocks of $G$ and $\mathcal P_1, \dots, \mathcal P_k$ are the fractional periodic timetabling polytopes of the subinstances of $G_1, \dots, G_k$, respectively, then 
    $$ \mathcal P_\mathrm{split} = (\mathcal P_1)_\mathrm{split} \times \dots \times (\mathcal P_k)_\mathrm{split}.$$
    In particular, if $G$ is a cactus graph, then $\mathcal P_\mathrm{split} = \mathcal P_\mathrm{I}$.
\end{theorem}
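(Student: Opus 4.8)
The plan is to reduce the first identity to the Cartesian-product compatibility already established in \Cref{thm:split-product}, and then to derive the cactus statement by combining that identity with \Cref{cor:one-cycle}.

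\textbf{Block-diagonal cycle matrix.} Since every cycle of $G$ is contained in a single block, the cycle space decomposes as a direct sum $\mathcal C = \bigoplus_{i=1}^k \mathcal C_i$, where $\mathcal C_i$ is the cycle space of $G_i$, viewed as the subgroup of $\mathbb Z^A$ supported on the arc set $A_i$ of $G_i$. Choosing an integral cycle basis $B_i$ for each $G_i$ and taking $B \coloneqq B_1 \cup \dots \cup B_k$ produces an integral cycle basis of $G$ whose cycle matrix $\Gamma$ is block-diagonal with diagonal blocks $\Gamma_1, \dots, \Gamma_k$. Partitioning $x$ into $(x^{(1)},\dots,x^{(k)})$ along $A = A_1 \sqcup \dots \sqcup A_k$ and $z$ into $(z^{(1)},\dots,z^{(k)})$ along $B = B_1 \sqcup \dots \sqcup B_k$, the system $\Gamma x = Tz$, $\ell \le x \le u$ separates completely, so $\mathcal P = \mathcal P_1 \times \dots \times \mathcal P_k$. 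By \Cref{rem:basis-indep}, neither $\mathcal P$ nor its split closure depends on the particular cycle basis up to a mixed-integer isomorphism, so working with the block-respecting basis is no loss of generality. Applying \Cref{thm:split-product} and inducting on $k$ then yields $\mathcal P_\mathrm{split} = (\mathcal P_1)_\mathrm{split} \times \dots \times (\mathcal P_k)_\mathrm{split}$, which is the first assertion.

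\textbf{The cactus case.} If $G$ is a cactus, then each block $G_i$ is either a bridge or a single simple cycle, hence has cyclomatic number $\mu(G_i) \le 1$. By \Cref{cor:one-cycle}, $(\mathcal P_i)_\mathrm{split} = (\mathcal P_i)_\mathrm{I}$ for every $i$. On the other hand, the mixed-integer set underlying $\mathcal P_\mathrm{I}$ is the Cartesian product of the mixed-integer sets underlying the $\mathcal P_i$ (the variable blocks and the defining constraints are disjoint across blocks), and $\conv(\cdot)$ commutes with Cartesian products, so $\mathcal P_\mathrm{I} = (\mathcal P_1)_\mathrm{I} \times \dots \times (\mathcal P_k)_\mathrm{I}$. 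Chaining the identities, $\mathcal P_\mathrm{split} = \prod_{i} (\mathcal P_i)_\mathrm{split} = \prod_{i} (\mathcal P_i)_\mathrm{I} = \mathcal P_\mathrm{I}$.

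\textbf{Main obstacle.} The argument is essentially bookkeeping, and I do not expect a genuine difficulty. The points that need a little care are: (i) justifying $\mathcal C = \bigoplus_i \mathcal C_i$ and the resulting block-diagonal form of $\Gamma$, which follows from the standard fact that cycles live inside blocks; (ii) the basis-independence invoked from \Cref{rem:basis-indep}; and (iii) the routine fact that taking convex hulls commutes with Cartesian products of mixed-integer sets with disjoint variables. The only real risk is index-management sloppiness when asserting that $\mathcal P$, the mixed-integer set, and $\mathcal P_\mathrm{I}$ all factor along the same block partition simultaneously.
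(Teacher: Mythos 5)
Your proof is correct and follows essentially the same route as the paper: decompose the cycle space along blocks, conclude $\mathcal P = \mathcal P_1 \times \dots \times \mathcal P_k$, apply \Cref{thm:split-product}, and finish the cactus case with \Cref{cor:one-cycle}. You are slightly more careful than the paper on two points, both worth the extra words: you explicitly construct a block-respecting cycle basis and invoke \Cref{rem:basis-indep} (the paper asserts that ``any'' cycle matrix is block-structured, which is not literally true for arbitrary integral cycle bases), and you spell out that $\mathcal P_\mathrm{I}$ also factors as $\prod_i (\mathcal P_i)_\mathrm{I}$, a step the paper leaves implicit when chaining with \Cref{cor:one-cycle}.
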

\begin{proof}
    By the above discussion, this is a direct consequence of \Cref{thm:split-product}.
    If $G$ is a cactus graph, then each block satisfies $\mu \leq 1$. It remains to apply \Cref{cor:one-cycle}.
\end{proof}

\subsection{Incidence-Based vs.\ Cycle-Based Formulation}
\label{ss:projection}

Recall from \Cref{rem:incidence-is-cycle} that the incidence-based formulation \eqref{eq:mip-incidence} of a PESP instance is identical to a particular cycle-based formulation \eqref{eq:mip-cycle} of an augmented instance, where the augmentation consists in successively adding arcs $a$ with $\ell_a = 0$ and $u_a = T-1$. Such arcs with $u_a - \ell_a = T-1$ are sometimes called \emph{free} (e.g., \citealt{goerigk_improved_2017}), as they do not impact the feasibility of a PESP instance. The augmentation procedure in \Cref{rem:incidence-is-cycle} hence decomposes as a sequence of $|V|$ \emph{simple free augmentations}, which we formally define as follows:

\begin{definition}
Let $I = (G, T, \ell, u, w)$ be a PESP instance. Let $I' = (G', T, \ell', u', w')$ be a PESP instance such that $I$ arises from $I'$ by deleting a \emph{free} arc $\overline{a}$, i.e., $u'_{\overline{a}} - \ell'_ {\overline{a}} = T-1$. We say that $I'$ is a \emph{simple free augmentation} of $I$ by $\overline{a}$.
\end{definition}

We will first investigate a trivial case of a simple augmentation $I'$ of $I$ by $\overline{a}$: If $\overline{a}$ is a bridge, then $\overline{a}$ constitutes a block of $G'$, so that we conclude by \Cref{thm:split-block} that
\begin{equation}
\label{eq:split-augmentation-bridge}
\mathcal P'_\mathrm{split} = \mathcal P_\mathrm{split} \times [\ell'_{\overline{a}}, u'_{\overline{a}}]_\mathrm{split} = \mathcal P_\mathrm{split} \times [\ell'_{\overline{a}}, u'_{\overline{a}}],    
\end{equation}
where $\mathcal P$ and $\mathcal P'$ are the fractional periodic tension polytopes of $I$ and $I'$, respectively. Since $\overline{a}$ is a bridge, any cycle basis for $G$ is a cycle basis for $G'$, so that the choice of any integral cycle basis yields a natural  projection $\mathcal P'_\mathrm{split} \to \mathcal P_\mathrm{split}, (x, x_{\overline{a}}, z) \mapsto (x, z)$,
which is well-defined and surjective by \eqref{eq:split-augmentation-bridge}. Thus any split inequality for $I'$ is trivially a split inequality for $I$ and vice versa.

We will hence turn our interest to the more interesting case that $\overline{a}$ is not a bridge. We start with an observation about cycle bases:

\begin{lemma}
\label{lem:augmented-cycle-basis}
    Let $I'$ be a simple free augmentation of $I$ by $\overline{a}$ such that $\overline{a}$ is not a bridge of $G'$.
    Then there is an integral cycle basis $B$ of $G$ and an oriented cycle $\overline{\gamma}$ such that $B' := B \cup \{\overline{\gamma}\}$ is an integral cycle basis of $G'$ and $\overline{a} \in \overline{\gamma}$.
\end{lemma}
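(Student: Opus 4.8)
The plan is to build everything from a single spanning tree and to use fundamental cycle bases. The point is that deleting the non-bridge arc $\overline a$ does not disturb connectivity, so $G$ and $G'$ can share a spanning tree, and then the fundamental cycle basis of $G'$ is obtained from that of $G$ by adjoining exactly one new cycle, namely the fundamental cycle of $\overline a$, which by construction contains $\overline a$.

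\textbf{Step 1: a common spanning tree.} Since $G'$ is weakly connected (\Cref{rem:preprocessing}) and $\overline a$ is not a bridge of $G'$, the graph $G = G' - \overline a$ is still weakly connected. Hence $G$ has a spanning tree $\mathcal T$, and because $V(G) = V(G')$ while $A(G) \subseteq A(G')$, the same $\mathcal T$ is a spanning tree of $G'$.

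\textbf{Step 2: the cycle bases.} Let $B$ be the fundamental cycle basis of $G$ with respect to $\mathcal T$; this is an integral cycle basis of $G$. The arc $\overline a$ does not lie in $G$ at all, hence is a co-tree arc of $G'$ with respect to $\mathcal T$; let $\overline\gamma$ be the fundamental cycle it induces, so that $\overline\gamma_{\overline a} = 1$, i.e.\ $\overline a \in \overline\gamma$. Now I would observe that the co-tree arcs of $G'$ with respect to $\mathcal T$ are precisely the disjoint union of the co-tree arcs of $G$ and $\{\overline a\}$, and that for a co-tree arc $c$ of $G$ its fundamental cycle uses only tree arcs together with $c$, hence is the same whether computed in $G$ or in $G'$. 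Therefore the fundamental cycle basis of $G'$ with respect to $\mathcal T$ equals $B \cup \{\overline\gamma\} = B'$, and since every fundamental cycle basis is an integral cycle basis, $B'$ is an integral cycle basis of $G'$ containing $\overline\gamma \ni \overline a$, as required.

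\textbf{Expected difficulty.} There is no genuine obstacle here; the only care needed is the bookkeeping across the two cycle spaces — checking $\mu(G') = \mu(G) + 1$, that the elements of $B$ reinterpreted in $\mathbb Z^{A \cup \{\overline a\}}$ have vanishing $\overline a$-component, and that passing from $G$ to $G'$ leaves the fundamental cycles of $G$'s co-tree arcs unchanged. Once these routine points are settled, the claim is immediate.
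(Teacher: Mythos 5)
Your proof is correct and follows essentially the same approach as the paper's: both arguments take a spanning tree $\mathcal T$ of $G$ (which exists since $G = G' - \overline{a}$ is weakly connected by the non-bridge hypothesis), observe that $\mathcal T$ is also a spanning tree of $G'$, and augment the fundamental cycle basis of $G$ with respect to $\mathcal T$ by the fundamental cycle of $\overline{a}$ in $G'$. Your write-up is somewhat more explicit about the bookkeeping (co-tree arcs, unchanged fundamental cycles), but the underlying idea is identical.
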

\begin{proof}
    Since $\overline{a}$ is not a bridge, $G$ and $G'$ have the same set of nodes, so that any spanning tree of $G$ is a spanning tree of $G'$. Hence, if $B$ is any fundamental cycle basis of $G$, we can augment $B$ by the fundamental cycle $\overline{\gamma}$ induced by $\overline{a}$ in $G'$.
\end{proof}

Choose cycle bases $B$, $B'$ and an oriented cycle $\overline{\gamma}$ as in \Cref{lem:augmented-cycle-basis}. We assume that $\mathcal P$ is defined using the cycle matrix $\Gamma$ of $B$, and that $\mathcal P'$ is defined using the cycle matrix $\Gamma'$ of $B'$, so that $\Gamma'$ arises from $\Gamma$ by appending the row $\overline{\gamma}^\top$.

\begin{lemma}
\label{lem:projection-compatible}
Let $I'$ be a simple free augmentation of $I$ by $\overline{a}$ such that $\overline{a}$ is not a bridge of $G'$.
    The natural projection
    $ \varphi: \mathcal P' \to \mathcal P, (x, x_{\overline a}, z, z_{\overline{\gamma}}) \mapsto (x, z)$
    is mixed-integer-compatible.
    In particular, $\varphi(\mathcal P'_\mathrm{split}) \subseteq \mathcal P_\mathrm{split}$.
\end{lemma}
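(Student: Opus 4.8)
The plan is to verify the two conditions in the definition of mixed-integer-compatibility directly: $\varphi$ must be affine, and it must send mixed-integer points to mixed-integer points. The map $\varphi$ in question is the coordinate projection $(x, x_{\overline a}, z, z_{\overline\gamma}) \mapsto (x, z)$, which is visibly linear (in particular affine), so the only thing to check is the lattice condition. Here the ambient mixed-integer structure on the domain is $\mathbb R^{A'} \times \mathbb Z^{B'}$, where $A' = A \cup \{\overline a\}$ and $B' = B \cup \{\overline\gamma\}$; that is, the continuous variables are $(x, x_{\overline a})$ and the integer variables are $(z, z_{\overline\gamma})$. On the codomain the structure is $\mathbb R^A \times \mathbb Z^B$. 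Given a point with $z \in \mathbb Z^B$ and $z_{\overline\gamma} \in \mathbb Z$, the image has $z$-part equal to $z \in \mathbb Z^B$, so it lies in $\mathbb R^A \times \mathbb Z^B$; this is immediate because we are simply forgetting the last continuous coordinate $x_{\overline a}$ and the last integer coordinate $z_{\overline\gamma}$, neither of which affects integrality of the surviving integer block.

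Next I would confirm that $\varphi$ actually maps $\mathcal P'$ into $\mathcal P$, which is needed so that \Cref{thm:split-descendance} applies. Recall that $\mathcal P'$ is defined by $\Gamma' x' = T z'$ together with $\ell' \le x' \le u'$, where $\Gamma'$ is obtained from $\Gamma$ by appending the row $\overline\gamma^\top$ and $x' = (x, x_{\overline a})$, $z' = (z, z_{\overline\gamma})$. Writing out the first $|B|$ rows of the system $\Gamma' x' = T z'$: since $\overline a \notin \gamma$ for every $\gamma \in B$ (the cycles of $B$ live entirely in $G$, which does not contain $\overline a$), the $a$-columns of these rows are exactly the columns of $\Gamma$, and the $\overline a$-column is zero. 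Hence these rows read $\Gamma x = T z$. The bound constraints $\ell \le x \le u$ are just the restriction of $\ell' \le x' \le u'$ to the arcs of $A$. Therefore $(x, z)$ satisfies the defining system of $\mathcal P$, i.e. $\varphi(\mathcal P') \subseteq \mathcal P$.

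With both facts in hand — $\varphi$ mixed-integer-compatible and $\varphi(\mathcal P') \subseteq \mathcal P$ — the conclusion $\varphi(\mathcal P'_\mathrm{split}) \subseteq \mathcal P_\mathrm{split}$ follows by a direct invocation of \Cref{thm:split-descendance}, taking $P_1 = \mathcal P'$ and $P_2 = \mathcal P$. I do not anticipate any genuine obstacle here: the statement is essentially a bookkeeping exercise in which the only substantive point is noticing that the appended cycle $\overline\gamma$ is precisely the fundamental cycle of $\overline a$ (from \Cref{lem:augmented-cycle-basis}), so that deleting the $z_{\overline\gamma}$-coordinate together with the $x_{\overline a}$-coordinate is compatible with the block structure of $\Gamma'$. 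The mild subtlety, if any, is simply being careful that the projection forgets one continuous \emph{and} one integer coordinate simultaneously, and that this is exactly the shape of map the mixed-integer-compatibility framework is designed to handle; once that is observed, everything else is routine.
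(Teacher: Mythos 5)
Your proof is correct and follows the same route as the paper's: observe that $\varphi$ is linear and maps mixed-integer points to mixed-integer points, then invoke \Cref{thm:split-descendance}. The paper states this in two sentences without the bookkeeping you spell out (the block structure of $\Gamma'$ and the containment $\varphi(\mathcal P') \subseteq \mathcal P$), but your verification is exactly what those sentences implicitly rely on.
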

\begin{proof}
    The map $\varphi$ is linear and maps mixed-integer points to mixed-integer points. That $\varphi$ descends to split closures follows from \Cref{thm:split-descendance}.
\end{proof}

In view of \Cref{lem:projection-compatible}, the split closure of the simple free augmentation is hence never worse, but could provide a potentially tighter relaxation by additional ``projected split inequalities''. We show now that this is not the case.

\begin{lemma}
\label{lem:fourier-motzkin}
    Let $\varphi: \mathcal P' \to \mathcal P$ denote the natural projection as in \Cref{lem:projection-compatible}. Then $\varphi(\mathcal P'_\mathrm{split}) = \mathcal P_\mathrm{split}$.
\end{lemma}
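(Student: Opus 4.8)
By \Cref{lem:projection-compatible} we already know $\varphi(\mathcal P'_\mathrm{split}) \subseteq \mathcal P_\mathrm{split}$, so the whole task is to prove the reverse inclusion: every point $(x, z) \in \mathcal P_\mathrm{split}$ lifts to a point $(x, x_{\overline a}, z, z_{\overline\gamma}) \in \mathcal P'_\mathrm{split}$. The natural candidate lift is obtained by choosing $x_{\overline a}$ so that the cycle equation $\overline\gamma^\top(x, x_{\overline a}) = T z_{\overline\gamma}$ is consistent, i.e. setting $x_{\overline a}$ equal to the value forced by the timetable recovered from $x$ (equivalently, the unique value in a prescribed interval of length $T$ making $\overline\gamma^\top$-tension a multiple of $T$), and then $z_{\overline\gamma} \coloneqq \overline\gamma^\top(x,x_{\overline a})/T$. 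Since $\overline a$ is free, $\ell'_{\overline a} \le x_{\overline a} \le u'_{\overline a}$ can always be arranged, so this lift lands in $\mathcal P'$. The content of the lemma is that it lands in $\mathcal P'_\mathrm{split}$.

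The cleanest route is via the flip-inequality description of the split closure furnished by \Cref{thm:split-is-flip} together with the reduction to simple cycles in \Cref{thm:simple}: it suffices to show that the lifted point satisfies every flip inequality of $I'$ for a simple oriented cycle $\gamma' \in \mathcal C'$ and every $F' \subseteq A'$. Decompose $\mathcal C' = \mathcal C \oplus \mathbb Z \overline\gamma$ via the chosen bases. If $\gamma'$ does not use the arc $\overline a$, then $\gamma' \in \mathcal C$, the flip inequality for $(\gamma', F' \cap A)$ in $I'$ is literally the flip inequality for the same pair in $I$ (the arc $\overline a$ contributes nothing since $\gamma'_{\overline a} = 0$), and it holds because $(x,z) \in \mathcal P_\mathrm{split} = \mathcal P_\mathrm{flip}$. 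The only new inequalities are those whose simple cycle $\gamma'$ passes through $\overline a$. For such a $\gamma'$, I would flip $\overline a$ if necessary (using the flip operation of \Cref{thm:flip-ineq}, which leaves the polytope's split/flip structure invariant) so as to reduce to the case where $\overline a$ does not contribute a slack term of the ``wrong'' sign; the key point is that $\overline a$ is free, so $u'_{\overline a} - \ell'_{\overline a} = T-1$, and the chosen value $x_{\overline a}$ makes the corresponding term in the flip inequality essentially vacuous — more precisely, one shows the term contributed by $\overline a$ together with the choice of $x_{\overline a}$ forces $\alpha_{\gamma',F'}$ into a regime where the inequality collapses to a flip inequality of $I$ (for the cycle $\gamma'$ with $\overline a$ deleted, which lies in $\mathcal C$) that $(x,z)$ already satisfies.

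An alternative, more self-contained route avoids flip inequalities: work directly with the Fourier–Motzkin elimination of the variable $x_{\overline a}$ (and $z_{\overline\gamma}$, which is determined by $x_{\overline a}$) from the inequality system. Since $\mathcal P'_\mathrm{split}$ is a polyhedron (\Cref{eq:split-def}) and $\varphi$ is a coordinate projection, $\varphi(\mathcal P'_\mathrm{split})$ is a polyhedron; one must show each of its facet inequalities is already valid for $\mathcal P_\mathrm{split}$. Using the split-inequality form \eqref{eq:split} and the structure of $\Lambda$ for \eqref{eq:ineq-form}, every multiplier $\lambda'$ for $I'$ restricts to a multiplier $\lambda$ for $I$ on the rows not involving $\overline a$ and $\overline\gamma$, and the extra rows can be absorbed because $\overline a$ is free — the slack $x_{\overline a} - \ell'_{\overline a}$ and $u'_{\overline a} - x_{\overline a}$ can be made to match whatever the split disjunction on $z_{\overline\gamma}$ demands without tightening anything on the $(x,z)$-coordinates.

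**Main obstacle.** The delicate step is the third case above: showing that a simple cycle $\gamma'$ through the newly added free arc $\overline a$ produces no genuinely new constraint on $(x, z)$ after the lift. This is exactly where freeness ($u'_{\overline a} - \ell'_{\overline a} = T - 1$) must be used in full force, and it requires a careful case analysis of the value $\alpha_{\gamma', F'}$ depending on whether $\overline a \in F'$ and on the sign of $\gamma'_{\overline a}$, tracking how the chosen $x_{\overline a}$ interacts with $\alpha_{\gamma',F'}$ modulo $T$. I expect this to be the bulk of the work; everything else is bookkeeping with the direct-sum decomposition of cycle spaces and the already-established identifications $\mathcal P_\mathrm{split} = \mathcal P_\mathrm{flip}$ and its simple-cycle refinement.
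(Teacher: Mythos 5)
Your ``alternative, more self-contained route'' via Fourier--Motzkin is in fact exactly the paper's approach, so the strategic idea is right; but the proposal stops precisely at the step you yourself flag as ``the bulk of the work,'' and that step is where all the content lies. Concretely: after reducing (by \Cref{thm:split-is-flip} and \Cref{thm:simple}) to the flip/simple-cycle description, one must show that every inequality produced by eliminating $x_{\overline a}$ from a \emph{pair} of defining inequalities of $\mathcal P'_\mathrm{split}$ is already valid on $\mathcal P_\mathrm{split}$. The easy pairs (bound vs.\ bound, bound vs.\ flip inequality) do collapse using $u'_{\overline a}-\ell'_{\overline a}=T-1$, as you anticipate. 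The genuinely hard pair is two flip inequalities for simple cycles $\gamma,\delta$ through $\overline a$ with opposite roles of $\overline a$ (one contributing a lower, the other an upper bound on $x_{\overline a}$). The paper resolves this by observing that $\gamma+\delta\in\mathcal C$ (the arc $\overline a$ cancels), and then proving the eliminated inequality is valid for every $(x,z)\in\mathcal P$ with $(\gamma+\delta)^\top x\in T\mathbb Z$, hence valid on $\mathcal P_\mathrm{split}$ by \eqref{eq:split-def}; this in turn needs a careful two-case analysis on $D\equiv\alpha_\gamma+\alpha_\delta-1\pmod T$. None of this is present in your proposal, so the argument is incomplete where it matters.

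Your ``primary'' Route A has an additional flaw beyond incompleteness. You propose choosing $x_{\overline a}$ to be ``the unique value in a prescribed interval of length $T$ making $\overline\gamma^\top$-tension a multiple of $T$'' and setting $z_{\overline\gamma}:=\overline\gamma^\top(x,x_{\overline a})/T$. But the feasible interval $[\ell'_{\overline a},u'_{\overline a}]$ has length only $T-1$, so the desired value can fall outside it; and more fundamentally, points of $\mathcal P'_\mathrm{split}$ need not have integral $z_{\overline\gamma}$, so forcing the cycle tension to a multiple of $T$ is neither necessary nor helpful --- it conflates $\mathcal P'_\mathrm{split}$ with $\mathcal P'_\mathrm{I}$. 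What one actually needs is that the (real-valued) lower and upper bounds that the flip inequalities through $\overline a$ impose on $x_{\overline a}$ are mutually consistent and consistent with $[\ell'_{\overline a},u'_{\overline a}]$, given $(x,z)\in\mathcal P_\mathrm{split}$; that consistency statement is exactly the Fourier--Motzkin redundancy claim, so Route A does not sidestep the hard case but merely restates it.
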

\begin{proof}
    Since $I'$ is an augmentation of $I$ by $\overline{a}$, we note at first, using the interpretation of split inequalities as flip inequalities from \Cref{thm:split-is-flip}, that the set of defining inequalities of $\mathcal P'_\mathrm{split}$ can be partitioned into the set of defining inequalities of $\mathcal P_\mathrm{split}$, which cannot contain the variable $x_{\overline{a}}$, and a remaining set of inequalities, which do all contain $x_{\overline{a}}$. 
    The image of $\varphi(\mathcal P'_\mathrm{split})$ can be described by Fourier-Motzkin elimination of the variable $x_{\overline{a}}$. It is therefore sufficient to show that all inequalities generated by the Fourier-Motzkin procedure are redundant for $\mathcal P_\mathrm{split}$. Since the redundancy is clear for those inequalities that do not contain $x_{\overline a}$, we will hence consider only the remaining inequalities where $x_{\overline a}$ has a non-zero coefficient.
    
    Among the defining inequalities of $\mathcal P'_\mathrm{split}$, $x_{\overline{a}}$ occurs precisely in the bound inequalities $x_{\overline{a}} \geq \ell'_{\overline{a}}$ and $x_{\overline{a}} \leq u'_{\overline{a}}$, and in the flip inequalities of simple cycles containing $\overline{a}$.  
    Fourier-Motzkin considers pairs of these inequalities, one of them giving a lower bound, and the other an upper bound on $x_{\overline{a}}$. That is, the following types of pairs have to be considered:
\begin{enumerate}[(1)]
    \item $x_{\overline{a}} \geq \ell'_{\overline{a}}$ and $x_{\overline{a}} \leq u'_{\overline{a}}$,
    \item $x_{\overline{a}} \geq \ell'_{\overline{a}}$ and a flip inequality for $(\gamma, F)$ with $\overline{a} \in \gamma$ and $\overline{a} \in F$,
    \item $x_{\overline{a}} \leq u'_{\overline{a}}$ and a flip inequality for $(\gamma, F)$ with $\overline{a} \in \gamma$ and $\overline{a} \notin F$,
    \item two flip inequalities for $(\gamma, F_\gamma)$ and $(\delta, F_\delta)$ with $\overline{a} \in \gamma$, $\overline{a} \notin F_\gamma$, $\overline{a} \in \delta$, $\overline{a} \in F_\delta$.
\end{enumerate}
In all those flip inequalities, we can assume that the cycles are simple and that the parameter $\alpha$ is at least $1$. Moreover, using the symmetry in \Cref{rem:flip-special}, we can without loss of generality fix the direction of $\overline{a}$ as forward or backward, replacing $\gamma$ by $-\gamma$ if necessary. Let us proceed with Fourier-Motzkin:
\begin{enumerate}[(1)]
    \item Elimination yields $\ell'_{\overline{a}} \leq u'_{\overline{a}}$, which is trivially true.
    \item Assume that $\overline{a}$ is forward in $\gamma$. Then we can write the flip inequality \eqref{eq:flip} for $(\gamma, F)$ with $\alpha := \alpha_{\gamma, F} \geq 1$ as
    $$ \alpha (u'_{\overline{a}} - x_{\overline{a}}) + f(x) \geq \alpha (T-\alpha),$$
    where $f(x) \geq 0$ for all $(x, z) \in \mathcal P$.
    Fourier-Motzkin elimination with $x_{\overline{a}} \geq \ell'_{\overline{a}}$ yields
    $$ \alpha u'_{\overline{a}} + f(x) \geq \alpha (T-\alpha) + \alpha \ell'_{\overline{a}},$$
    or equivalently, recalling that $u'_{\overline{a}} - \ell'_{\overline{a}} = T-1$,
        $$ f(x) \geq \alpha(T - \alpha - u'_{\overline{a}} + \ell'_{\overline{a}}) =  \alpha(1 - \alpha),$$
    but this is redundant for $\mathcal P_\mathrm{split}$, since $(x, z) \in \mathcal P$ and $\alpha \geq 1$ imply $f(x) \geq 0 \geq \alpha(1-\alpha)$.
    \item is analogous to (2).
    \item This is the most tedious part. We assume without loss of generality that $\overline{a}$ is backward in $\gamma$ and forward in $\delta$. We will show that the Fourier-Motzkin inequality is valid for all points $(x,z) \in \mathcal P$ with $(\gamma + \delta)^\top x \in T \mathbb Z$. Since $\gamma + \delta$ is an element of the cycle space $\mathcal C$ of $G$, the Fourier-Motzkin inequality is hence valid for the convex hull of those points and in particular for $\mathcal P_\mathrm{split}$ by virtue of \eqref{eq:split-def}.
    
    We first write down the flip inequalities, omitting $F_\gamma$ and $F_\delta$ in the subscripts of $\alpha$:
    \begin{align*}
        \alpha_{\gamma} (x_{\overline{a}} - \ell'_{\overline{a}}) + f(x) &\geq \alpha_\gamma (T- \alpha_\gamma),\\
        \alpha_{\delta} (u'_{\overline{a}} - x_{\overline{a}}) + g(x) &\geq \alpha_\delta (T- \alpha_\delta),
    \end{align*}
    where $f(x), g(x) \geq 0$ for all $(x, z) \in \mathcal P$.
    Elimination produces
    \begin{equation}
    \label{eq:fourier-motzkin-flip}
        \alpha_\delta f(x) + \alpha_\gamma g(x) \geq \alpha_\gamma \alpha_\delta (2T - \alpha_\gamma - \alpha_\delta - u'_{\overline{a}} + \ell'_{\overline{a}}) =  \alpha_\gamma \alpha_\delta (T + 1 - \alpha_\gamma - \alpha_\delta) .
    \end{equation}
    The inequality \eqref{eq:fourier-motzkin-flip} is trivially redundant if
    $\alpha_\gamma + \alpha_\delta \geq T+1$. We hence assume from now on $\alpha_\gamma + \alpha_\delta \leq T$.   
    Let $(x, z) \in \mathcal P$ with $(\gamma + \delta)^\top x \in T \mathbb Z$. Then
    \begin{align*}
        & & \sum_{a \in A\setminus F_\gamma} \gamma_a x_a + \sum_{a \in F_\gamma} \gamma_a x_a + \sum_{a \in A\setminus F_\delta} \delta_a x_a + \sum_{a \in F_\delta} \delta_a x_a &\equiv 0 \mod T.
    \end{align*}
    Since $\gamma_{\overline{a}} + \delta_{\overline{a}} = 0$, ${\overline{a}} \notin F_\gamma$, ${\overline{a}} \in F_\delta$, this implies
    \begin{align*}
       & & \sum_{a \in A\setminus (F_\gamma \cup \{{\overline{a}}\})} \gamma_a x_a + \sum_{a \in F_\gamma} \gamma_a x_a + \sum_{a \in A\setminus F_\delta} \delta_a x_a + \sum_{a \in F_\delta \setminus\{{\overline{a}}\}} \delta_a x_a &\equiv 0 \mod T,
    \end{align*}
    so that, using the definition of $\alpha_\gamma, \alpha_\delta$ (cf.\ \Cref{thm:flip-ineq}),
    \begin{align*}
        &\sum_{a \in A\setminus  (F_\gamma \cup \{{\overline{a}}\})} \gamma_a (x_a-\ell_{a}) - \sum_{a \in F_\gamma} \gamma_a (u_{a}-x_a) + \sum_{a \in A\setminus F_\delta} \delta_a (x_a-\ell_{a}) - \sum_{a \in F_\delta \setminus \{{\overline{a}}\}} \delta_a (u_{a}-x_a)\\
        &\equiv - \sum_{a \in A \setminus  (F_\gamma \cup \{{\overline{a}}\})} \gamma_a \ell_{a} - \sum_{a \in F_\gamma} \gamma_a u_{a} - \sum_{a \in A \setminus F_\delta} \delta_a \ell_{a} - \sum_{a \in F_\delta \setminus \{{\overline{a}}\}} \delta_a u_{a}  \mod T \\
        &\equiv \alpha_\gamma + \alpha_\delta + u'_{\overline{a}} - \ell'_{\overline{a}} \mod T\\
        &\equiv \alpha_\gamma + \alpha_\delta - 1 \mod T
    \end{align*}
    As we can assume $\alpha_\gamma, \alpha_\delta \geq 1$, we have that $\alpha := \alpha_\gamma + \alpha_\delta - 1 \geq 0$. This implies that
    \begin{align*}
        D := \sum_{a \in A\setminus  (F_\gamma \cup \{{\overline{a}}\})} \gamma_a (x_{a}-\ell_{a}) - \sum_{a \in F_\gamma} \gamma_a (u_{a}-x_{a}) + \sum_{a \in A\setminus F_\delta} \delta_a (x_{a}-\ell_{a}) - \sum_{a \in F_\delta \setminus \{{\overline{a}}\}} \delta_a (u_{a}-x_{a})
    \end{align*}
    is either $\leq \alpha - T$ (a) or $\geq \alpha$ (b).
    Before showing that \eqref{eq:fourier-motzkin-flip} is redundant in both cases, we write down the left-hand side of \eqref{eq:fourier-motzkin-flip} explicitly:
    \begin{equation}
    \label{eq:the-long-thing}
        \begin{aligned}
            &\alpha_\delta f(x) + \alpha_\gamma g(x) \\
            &= 
            \alpha_\delta (T-\alpha_\gamma) \sum_{a \in A\setminus  (F_\gamma \cup \{\overline{a}\}), \gamma_a = 1} (x_{a}-\ell_{a})
            + \alpha_\delta \alpha_\gamma \sum_{a \in A\setminus  (F_\gamma \cup \{\overline{a}\}), \gamma_a = -1} (x_{a}-\ell_{a})\\
            &
            + \alpha_\delta\alpha_\gamma \sum_{a \in F_\gamma, \gamma_a = 1} (u_{a}-x_{a})
            + \alpha_\delta (T-\alpha_\gamma) \sum_{a \in F_\gamma, \gamma_a = -1} (u_{a}-x_{a})\\
            &+ \alpha_\gamma (T-\alpha_\delta) \sum_{a \in A\setminus  F_\delta, \delta_a = 1} (x_{a}-\ell_{a})
            + \alpha_\gamma \alpha_\delta \sum_{a \in A\setminus F_\delta, \delta_a = -1} (x_{a}-\ell_{a})\\
            &
            + \alpha_\gamma\alpha_\delta \sum_{a \in F_\delta \setminus \{\overline{a}\}, \delta_a = 1} (u_{a}-x_{a})
            + \alpha_\gamma (T-\alpha_\delta) \sum_{a \in F_\delta \setminus \{\overline{a}\}, \delta_a = -1} (u_{a}-x_{a}).
        \end{aligned}
        \end{equation}
    \begin{enumerate}[(a)]
        \item Expanding $\alpha_\delta(T- \alpha_\gamma)$ and $\alpha_\gamma(T-\alpha_\delta)$ in \eqref{eq:the-long-thing}, and then bounding all summands with $T$ as a factor by $0$ from below, we obtain
        \begin{align*}
            \alpha_\delta f(x) + \alpha_\gamma g(x) 
            \geq - \alpha_\gamma \alpha_\delta D
            \geq -\alpha_\gamma \alpha_\delta(\alpha - T)
            = \alpha\gamma \alpha_\delta(T+1-\alpha_\gamma - \alpha_\delta).
            \end{align*}
            Hence \eqref{eq:fourier-motzkin-flip} holds in the case that $D \leq \alpha - T$.
        \item Let $\nu := \min(\alpha_\gamma, \alpha_\delta)$. Expanding $\alpha_\delta(T- \alpha_\gamma)$ and $\alpha_\gamma(T-\alpha_\delta)$ in \eqref{eq:the-long-thing}, bounding $T\alpha_\gamma, T\alpha_\delta$ from below by $T\nu$, we find
        \begin{align*}
            \alpha_\delta f(x) + \alpha_\gamma g(x)  \geq (T\nu 
            - \alpha_\gamma \alpha_\delta) D.
        \end{align*}
        Since $\nu$ is one of $\alpha_\gamma, \alpha_\delta$ and $\alpha_\gamma, \alpha_\delta \leq T-1$, we have $T\nu - \alpha_\gamma \alpha_\delta \geq 0$. This implies with $D \geq \alpha$ that
        \begin{align*}
        &  \alpha_\delta f(x) + \alpha_\gamma g(x) \\
            &\geq (T\nu - \alpha_\gamma \alpha_\delta) \alpha\\
            &= (T\nu - \alpha_\gamma \alpha_\delta) (\alpha_\gamma + \alpha_\delta - 1)\\
            &= T\nu (\alpha_\gamma + \alpha_\delta - 1) + \alpha_\gamma \alpha_\delta (1 - \alpha_\gamma - \alpha_\gamma) \\
            &= T\nu (\alpha_\gamma + \alpha_\delta - 1) - T\alpha_\gamma \alpha_\delta + \alpha_\gamma \alpha_\delta (T + 1 - \alpha_\gamma - \alpha_\gamma)
        \end{align*}
        It remains to show that 
        $ T\nu (\alpha_\gamma + \alpha_\delta - 1) - T\alpha_\gamma \alpha_\delta \geq 0$. 
        This is true since $\nu$ is one of $\alpha_\gamma, \alpha_\delta \geq 1$.
    \end{enumerate}
    \end{enumerate}
We conclude that the image $\varphi(\mathcal P'_\mathrm{split})$ is fully described by the flip inequalities of cycles not containing $\overline{a}$ and the variable bounds for all arcs except $\overline{a}$. Hence $\varphi(\mathcal P'_\mathrm{split}) = \mathcal P_\mathrm{split}$.
\end{proof}

As a corollary to \Cref{lem:fourier-motzkin}, we obtain the following result.
\begin{theorem}
\label{thm:free-augmentation}
    Let $I$ and $I'$ be PESP instances with fractional periodic timetabling polyhedra $\mathcal P$ and $\mathcal P'$, respectively. Suppose that $I'$ arises from $I$ by a sequence of simple free augmentations. If $\varphi: \mathcal P' \to \mathcal P$ denotes the natural projection, then $\varphi(\mathcal P'_\mathrm{split}) = \mathcal P_\mathrm{split}$.
\end{theorem}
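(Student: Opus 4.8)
The plan is to reduce \Cref{thm:free-augmentation} to the single-step statement \Cref{lem:fourier-motzkin} — together with the bridge case recorded in \eqref{eq:split-augmentation-bridge} — by induction on the number of simple free augmentations. Write $I = I_0, I_1, \dots, I_k = I'$, where each $I_m$ is a simple free augmentation of $I_{m-1}$ by a free arc $\overline{a}_m$. First I would fix compatible cycle bases: starting from a fundamental cycle basis $B_0$ of $G_0 = G$ with respect to some spanning tree $\mathcal T$, I would, whenever $\overline{a}_m$ is not a bridge (so $G_{m-1}$ and $G_m$ have the same vertex set), extend $B_{m-1}$ to $B_m = B_{m-1} \cup \{\overline{\gamma}_m\}$ by the fundamental cycle of $\overline{a}_m$ as in \Cref{lem:augmented-cycle-basis}, and whenever $\overline{a}_m$ is a bridge, take $B_m = B_{m-1}$, which is still an integral cycle basis. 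With these choices each cycle matrix $\Gamma_m$ arises from $\Gamma_{m-1}$ by appending at most one row, and we let $\mathcal P_m$ be defined using $\Gamma_m$; by \Cref{rem:basis-indep} this entails no loss of generality.

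Next I would observe that the natural projection $\varphi_m: \mathcal P_m \to \mathcal P_{m-1}$ forgetting the coordinate $x_{\overline{a}_m}$ — and, in the non-bridge case, also the coordinate $z_{\overline{\gamma}_m}$ — satisfies $\varphi_m((\mathcal P_m)_\mathrm{split}) = (\mathcal P_{m-1})_\mathrm{split}$. In the non-bridge case this is precisely \Cref{lem:fourier-motzkin} (well-definedness being part of \Cref{lem:projection-compatible}); in the bridge case it follows from \eqref{eq:split-augmentation-bridge}, since there $(\mathcal P_m)_\mathrm{split} = (\mathcal P_{m-1})_\mathrm{split} \times [\ell_{\overline{a}_m}, u_{\overline{a}_m}]$ and $\varphi_m$ is just the projection onto the first factor. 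Moreover the natural projection $\varphi: \mathcal P' \to \mathcal P$ named in the theorem is the composition $\varphi = \varphi_1 \circ \varphi_2 \circ \dots \circ \varphi_k$, because forgetting the augmentation coordinates one step at a time is the same as forgetting all of them at once.

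Finally I would conclude by a straightforward induction on $k$. The base case $k = 0$ is trivial, $\varphi$ being the identity. For the step,
$$ \varphi\big((\mathcal P_k)_\mathrm{split}\big) = \varphi_1 \circ \dots \circ \varphi_{k-1}\Big( \varphi_k\big((\mathcal P_k)_\mathrm{split}\big) \Big) = \varphi_1 \circ \dots \circ \varphi_{k-1}\big((\mathcal P_{k-1})_\mathrm{split}\big) = (\mathcal P_0)_\mathrm{split} = \mathcal P_\mathrm{split}, $$
where the middle equality uses the single-step identity and the last two the inductive hypothesis applied to the chain $I_0, \dots, I_{k-1}$.

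There is no serious obstacle remaining, since all of the real work is carried by \Cref{lem:fourier-motzkin}; the only points requiring care are bookkeeping ones — choosing the cycle bases so that the intermediate $\mathcal P_m$ are the ``right'' polytopes and the $\varphi_m$ are genuine coordinate projections, treating the bridge and non-bridge steps uniformly, and checking that the step-wise projections compose to the projection in the statement. If one preferred to avoid the cycle-basis bookkeeping altogether, one could instead transport everything along the mixed-integer isomorphisms induced by unimodular basis changes via \Cref{thm:split-descendance} and \Cref{rem:basis-indep}, but the direct approach above seems cleanest.
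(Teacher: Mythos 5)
Your proposal is correct and takes essentially the same approach as the paper: the paper states \Cref{thm:free-augmentation} as an immediate corollary of \Cref{lem:fourier-motzkin} (non-bridge steps) together with the block-product observation \eqref{eq:split-augmentation-bridge} (bridge steps), and your induction merely writes out the composition of one-step projections and the cycle-basis bookkeeping that the paper leaves implicit.
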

In particular, recalling \Cref{rem:incidence-is-cycle}, the incidence-based formulation \eqref{eq:mip-incidence} is not stronger than the cycle-based formulation \eqref{eq:mip-cycle} in terms of split closures.
Consequently, it is of no use to develop methods which augment an instance by a free arc, obtain a flip/split inequality and project down again, as this will not lead to information which cannot already be obtained from the split closure of the original instance.

\subsection{Binarization by Subdivision}
\label{ss:binarization}

A reformulation of a MIP general variables into one with binary variables can exhibit stronger split closures \citep{dash_binary_2018} or lift-and-project closures \citep{aprile_binary_2021}. For the application of periodic timetabling, there is a combinatorial binarization method: Let $I = (G, T, \ell, u, w)$ be a PESP instance, $G = (V, A)$. We assume that $0 \leq \ell < T$ and $\ell \leq u < \ell + T$ by preprocessing (see \Cref{rem:preprocessing}), so that the integer periodic offset variables $p_a$ in the incidence-based formulation \eqref{eq:mip-incidence} of PESP can only take values in $\{0, 1, 2\}$.
Moreover, if $u_a \leq T$ for some $a \in A$, then $p_a \in \{0, 1\}$ for any integer feasible solution $(x, \pi, p)$ \citep{liebchen_periodic_2006}.

\begin{definition}
Let $I' = (G', T, \ell', u', w')$ be a PESP instance that arises from $I$ by subdividing an arc $\overline{a} \in A$ with $\ell_{\overline{a}} < u_{\overline{a}}$ into two new arcs $a_1, a_2$ such that:
\begin{align*}
0\leq \ell'_{a_1} &\leq u'_{a_1},\\
0 \leq \ell'_{a_2} &\leq u'_{a_2},\\
\ell'_{a_1}+ \ell'_{a_2} &= \ell_{\overline{a}},\\
u'_{a_1}+ u'_{a_2} &= u_{\overline{a}},\\
w'_{a_1} = w'_{a_2} &= w_{\overline{a}}.
\end{align*}
We call $I'$ a \emph{simple subdivision} of $I$ at $\overline{a}$. 
\end{definition}

Observe that if the bounds on the arc $\overline{a}$ are such that $u_{\overline{a}} > T$, one can always construct a simple subdivision $I'$ of $I$ at $\overline{a}$ such that $u'_{a_i} - \ell'_{a_i}> 0$ and $u'_{a_i} \leq T$ for $i\in \{1,2\}$, due to the assumption that $\ell <T$ and $u< \ell+T$. 
%
%
As a result, for the instance $I'$ arising from subdividing each arc $\overline{a}$ with $u_{\overline{a}} > T$ as above, the incidence-based MIP formulation \eqref{eq:mip-incidence} will then have exclusively binary variables.

\begin{example}
\Cref{fig:pesp-subdivision} shows the instance obtained from the instance $I$ from \Cref{fig:pesp-example} by subdividing every arc with $u_a > T$. 

\begin{figure}[htbp]
    \centering
    \def\circledarrow#1#2#3{ 
	\draw[#1,<-] (#2) +(60:#3) arc(60:-260:#3);
}

\begin{tikzpicture}[yscale=2]
	\definecolor{highlight}{HTML}{4eed98}
	\definecolor{mint}{HTML}{14b861}
	\definecolor{darkblue}{HTML}{003399}
	\tikzstyle{a} = [line width=1.3, ->]
	\tikzstyle{am} = [line width=6, highlight]
	\tikzstyle{t} = [midway, font=\footnotesize]
	\tikzstyle{ta} = [t, above]
	\tikzstyle{tb} = [t, below, blue]
	\tikzstyle{tr} = [t, right]
	\tikzstyle{tl} = [t, left]
	\tikzstyle{v} = [draw, circle, inner sep=4, fill = gray!20, outer sep = 2]
	\tikzstyle{c} = [gray]
	\tikzmath{\h = 4; \v = 2;};
	\node[v] (A1) at (0,0) {0};
	\node[v] (A2) at ($(1*\h,0)$) {1};
	\node[v] (A3) at ($(2*\h,0)$) {4};
	\node[v] (A4) at ($(3*\h,0)$) {5};
	
	\node[v] (B1) at ($(0*\h,\v)$) {9};
	\node[v] (B2) at ($(1*\h,\v)$) {8};
	\node[v] (B3) at ($(2*\h,\v)$) {5};
	\node[v] (B4) at ($(3*\h,\v)$) {4};

	\node[v] (C2) at ($(1*\h,0.5*\v)$) {0};
	\node[v] (C3) at ($(2*\h,0.5*\v)$) {9};
	
	\draw[a] (A1) -- node[ta] {$[1,2],11$}  node[tb] {$1$} (A2);
	\draw[a] (A2) -- node[ta] {$[3,6],11$} node[tb] {$3$} (A3);
	\draw[a] (A3) -- node[ta] {$[1,2],11$} node[tb] {$1$} (A4);
	
	\draw[a] (B4) -- node[ta] {$[1,2],11$} node[tb] {$1$} (B3);
	\draw[a] (B3) -- node[ta] {$[3,6],11$} node[tb] {$3$} (B2);
	\draw[a] (B2) -- node[ta] {$[1,2],11$} node[tb] {$1$} (B1);
	
	\draw[a] (A4) to[bend right] node[ta, rotate = -90] {$[1,10],10$} node[tb, rotate = -90] {$9$} (B4);
	\draw[a] (B1)  to[bend right] node[ta, rotate=90] {$[1,10],10$} node[tb, rotate = 90] {$1$} (A1);
	\draw[a] (B2) -- node[ta, rotate = 90] {$[2,6],0$}  node[tb, rotate = 90] {$2$}  (C2);
	\draw[a] (C2) -- node[ta, rotate = 90] {$[1,6],0$} node[tb, rotate = 90] {$1$} (A2);
	\draw[a] (A3) -- node[ta, rotate = -90] {$[2,6],0$} node[tb, rotate = -90] {$5$} (C3);
	\draw[a] (C3) -- node[ta, rotate = -90] {$[1,6],0$} node[tb, rotate = -90] {$6$} (B3);

\end{tikzpicture}
    \caption{Subdivision of the instance in \Cref{fig:pesp-example} obtained from two simple subdivisions such that $u_a \leq T$ for all arcs $a$. }

    \label{fig:pesp-subdivision}
\end{figure}

\end{example}

Let $I'$ be a simple subdivision of a PESP instance $I$ at $\overline{a}$, introducing new arcs $a_1$ and $a_2$. The cycle spaces of $G$ and $G'$ are isomorphic: If $\gamma$ is an element of the cycle space of $G$, then $\gamma'$ with
$$
\gamma'_a := \begin{cases}
    \gamma_{\overline{a}} & \text{ if } a \in \{a_1, a_2\}, \\
    \gamma_a & \text{ if } a \notin\{a_1, a_2\},
\end{cases}
$$
defines an element of the cycle space of $G'$, and the whole cycle space of $G'$ arises this way. We can therefore associate to an integral cycle basis $B = \{\gamma_1, \dots, \gamma_\mu\}$ of $G$ the integral cycle basis $B' = \{\gamma_1', \dots, \gamma_\mu'\}$. Then any cycle offset $z$ in \eqref{eq:mip-cycle} w.r.t.\ $B$ defines a cycle offset $z'$ w.r.t.\ $B'$ by $z'_{\gamma'} := z_\gamma$, so that cycle offsets are essentially the same.
We will use $\Gamma$ and $\Gamma'$ to define $\mathcal P$ and $\mathcal P'$, the fractional periodic tension polytopes of $I'$ and $I$, respectively.

\begin{lemma}
\label{lem:subdivision-compatible}
Consider a simple subdivision $I'$ of $I$ at an arc $\overline{a}$ with 
notation as above.
\begin{enumerate}[\normalfont (1)]
    \item The map $\rho: \mathcal P' \to \mathcal P, (x, x_{a_1}, x_{a_2}, z) \mapsto (x, x_{a_1} + x_{a_2}, z)$ is well-defined and mixed-integer-compatible.
    \item The map $s: \mathcal P \to \mathcal P', (x, x_{\overline{a}}, z) \mapsto \left(x, \ell'_{a_1} + \frac{u'_{a_1} - \ell'_{a_1}}{u_{\overline{a}} - \ell_{\overline{a}}} (x_{\overline{a}} - \ell_{\overline{a}}), \ell'_{a_2} + \frac{u'_{a_2} - \ell'_{a_2}}{u_{\overline{a}} - \ell_{\overline{a}}} (x_{\overline{a}} - \ell_{\overline{a}}), z \right)$ is well-defined and mixed-integer-compatible.
    \item $\rho \circ s: \mathcal P \to \mathcal P$ is the identity map.
    \item $\rho(\mathcal P'_{\mathrm split}) = \mathcal P_\mathrm{split}$.
\end{enumerate}
\end{lemma}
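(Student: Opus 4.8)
The plan is to derive part (4) directly from parts (1)--(3) together with the descent property of split closures under mixed-integer-compatible maps, i.e.\ \Cref{thm:split-descendance}. Note first that saying $\rho$ and $s$ are ``well-defined'' in parts (1) and (2) encodes exactly the inclusions $\rho(\mathcal P') \subseteq \mathcal P$ and $s(\mathcal P) \subseteq \mathcal P'$, which is the hypothesis needed to invoke \Cref{thm:split-descendance}; so no further geometric work is required at this stage.

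For the inclusion ``$\subseteq$'': since $\rho$ is mixed-integer-compatible and $\rho(\mathcal P') \subseteq \mathcal P$, \Cref{thm:split-descendance} yields $\rho(\mathcal P'_\mathrm{split}) \subseteq \mathcal P_\mathrm{split}$.

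For the reverse inclusion ``$\supseteq$'', I would use the section $s$. By part (2), $s$ is mixed-integer-compatible with $s(\mathcal P) \subseteq \mathcal P'$, so \Cref{thm:split-descendance} gives $s(\mathcal P_\mathrm{split}) \subseteq \mathcal P'_\mathrm{split}$. Applying the map $\rho$ to both sides and using that images of maps preserve set inclusions, we get $\rho(s(\mathcal P_\mathrm{split})) \subseteq \rho(\mathcal P'_\mathrm{split})$. But part (3) states $\rho \circ s = \mathrm{id}_{\mathcal P}$, so the left-hand side equals $\mathcal P_\mathrm{split}$. Hence $\mathcal P_\mathrm{split} \subseteq \rho(\mathcal P'_\mathrm{split})$, and combining with the first inclusion we conclude $\rho(\mathcal P'_\mathrm{split}) = \mathcal P_\mathrm{split}$.

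I expect no real obstacle in this last step: it is the standard ``retraction'' pattern showing that whenever a mixed-integer-compatible surjection admits a mixed-integer-compatible section, the two split closures are identified by the surjection, and nothing specific to subdivision enters here. All the substance of the lemma sits in verifying (1)--(3) --- in particular checking that the affine formula for $s$ maps $\mathcal P$ into $\mathcal P'$ and sends mixed-integer points to mixed-integer points (the $z$-coordinates are untouched and $x_{a_1}, x_{a_2}$ are affine in $x_{\overline{a}}$), and that $\rho$ collapses $x_{a_1} + x_{a_2}$ back to $x_{\overline{a}}$ so that $\rho \circ s = \mathrm{id}$.
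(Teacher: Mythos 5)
Your argument for part (4) is exactly the paper's: both sides of the equality come from \Cref{thm:split-descendance} applied to $\rho$ and to $s$, and part (3) (the retraction $\rho\circ s=\mathrm{id}_{\mathcal P}$) promotes $\rho(\mathcal P'_\mathrm{split})\subseteq\mathcal P_\mathrm{split}$ to an equality. The paper phrases it as ``$\rho|_{\mathcal P'_\mathrm{split}}\circ s|_{\mathcal P_\mathrm{split}}$ is the identity, hence $\rho|_{\mathcal P'_\mathrm{split}}$ is surjective,'' which is the same chain you spell out with set inclusions.

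However, you dismiss parts (1)--(3) as routine and only gesture at them, whereas the paper (and the lemma as stated) requires verifying them, and they are not entirely free. For (1) you must check that $\ell_{\overline{a}}\leq x_{a_1}+x_{a_2}\leq u_{\overline{a}}$ holds on $\mathcal P'$, which uses the subdivision identities $\ell'_{a_1}+\ell'_{a_2}=\ell_{\overline{a}}$ and $u'_{a_1}+u'_{a_2}=u_{\overline{a}}$; and you must also check that the cycle constraint $\Gamma' x' = Tz$ maps to $\Gamma x = Tz$ under summing $x_{a_1}+x_{a_2}$ (this works because the associated cycle basis $B'$ assigns equal coefficients to $a_1$ and $a_2$). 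For (2), the formula for $s$ has $u_{\overline{a}}-\ell_{\overline{a}}$ in the denominator, so you need $\ell_{\overline{a}}<u_{\overline{a}}$ (built into the definition of a simple subdivision), and you must verify $\ell'_{a_i}\leq s(\cdot)_{a_i}\leq u'_{a_i}$ via monotonicity of the affine interpolation. These are short computations, but omitting them leaves the lemma unproved; only with them does your retraction argument for (4) actually go through.
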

\begin{proof}
\begin{enumerate}[(1)]
    \item The map is well-defined: The hypothesis $\ell'_{a_1} + \ell'_{a_2} = \ell_{\overline{a}}$ and $u'_{a_1} + u'_{a_2} = u_{\overline{a}}$ implies that $\ell_{\overline{a}} \leq x_{a_1}+x_{a_2} \leq u_{\overline{a}}$ holds for all $(x, x_{a_1}, x_{a_2}, z) \in \mathcal P'$. As $\rho$ is linear and does not affect the integrality of $z$, it is mixed-integer-compatible.
    \item The map is well defined: 
    Due to the assumption of subdividing arcs with $u_{\overline{a}} > T$ only, we have
    $u_{\overline{a}} - \ell_{\overline{a}} > 0$ and $u'_{a_1} - \ell'_{a_1} \geq 0$. Since $x_{\overline{a}} - \ell_{\overline{a}} \geq 0$ for all $(x, x_{\overline{a}}, z) \in \mathcal P$, we conclude
    $$ \ell'_{a_1} = \ell'_{a_1} + \frac{u'_{a_1} - \ell'_{a_1}}{u_{\overline{a}} - \ell_{\overline{a}}} (\ell_{\overline{a}} - \ell_{\overline{a}})  \leq \ell'_{a_1} + \frac{u'_{a_1} - \ell'_{a_1}}{u_{\overline{a}} - \ell_{\overline{a}}} (x_{\overline{a}} - \ell_{\overline{a}}) \leq \ell'_{a_1} + \frac{u'_{a_1} - \ell'_{a_1}}{u_{\overline{a}} - \ell_{\overline{a}}} (u_{\overline{a}} - \ell_{\overline{a}}) = u'_{a_1}. $$
    The argument for the $x_{a_2}$ entry is analogous. We note that $s$ is affine and maps point with integral $z$ to points with integral $z$, so that $s$ is mixed-integer-compatible.
    \item This follows since
    $$ \ell'_{a_1} + \frac{u'_{a_1} - \ell'_{a_1}}{u_{\overline{a}} - \ell_{\overline{a}}} (x_{\overline{a}} - \ell_{\overline{a}})
    + \ell'_{a_2} + \frac{u'_{a_2} - \ell'_{a_2}}{u_{\overline{a}} - \ell_{\overline{a}}} (x_{\overline{a}} - \ell_{\overline{a}})
    = \ell_{\overline{a}} + \frac{u_{\overline{a}} - \ell_{\overline{a}}}{u_{\overline{a}} - \ell_{\overline{a}}} (x_{\overline{a}} - \ell_{\overline{a}}) = x_{\overline{a}}.$$
    \item Since $\rho$ and $s$ are mixed-integer compatible, $\rho(\mathcal P'_\mathrm{split}) \subseteq \mathcal P_\mathrm{split}$ and $s(\mathcal P_\mathrm{split}) \subseteq \mathcal P'_\mathrm{split}$. The composition $\rho|_{\mathcal P'_\mathrm{split}} \circ s|_{\mathcal P_\mathrm{split}}$ of the restrictions to split closures is hence well-defined, and by (3), it is the identity map on $\mathcal P_\mathrm{split}$. We conclude that $\rho|_{\mathcal P'_\mathrm{split}} $ is surjective. \qedhere
\end{enumerate}
\end{proof}

A repeated application of \Cref{lem:subdivision-compatible} together with \Cref{thm:free-augmentation} yields:

\begin{theorem}
\label{thm:binarization}
    Let $I$ and $I'$ be PESP instances with fractional periodic timetabling polyhedra $\mathcal P$ and $\mathcal P'$, respectively. Suppose that $I'$ arises from $I$ by a sequence of simple subdivisions and simple free augmentations. If $\psi: \mathcal P' \to \mathcal P$ denotes the composition of the summation maps $\rho$ in \Cref{lem:subdivision-compatible}~(1) for the subdivisions and the projection maps $\varphi$ in \Cref{lem:projection-compatible} for the free augmentations, then $\psi(\mathcal P'_\mathrm{split}) = \mathcal P_\mathrm{split}$.
\end{theorem}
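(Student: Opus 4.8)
The plan is to prove the statement by induction on the number $k$ of operations turning $I$ into $I'$. Write $I = I_0, I_1, \dots, I_k = I'$ for the intermediate PESP instances, where each $I_{j+1}$ arises from $I_j$ by a single simple subdivision or simple free augmentation, and let $\mathcal P^{(j)}$ be the fractional periodic timetabling polytope of $I_j$. We fix cycle bases consistently along the sequence: start from any integral cycle basis of $I_0$ and transport it at each step, via the cycle-space isomorphism described before \Cref{lem:subdivision-compatible} for a subdivision, and via \Cref{lem:augmented-cycle-basis} (extending by a fundamental cycle) resp.\ the unchanged basis for a non-bridge resp.\ bridge free augmentation. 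For the step $I_j \to I_{j+1}$ let $\psi_j \colon \mathcal P^{(j+1)} \to \mathcal P^{(j)}$ be the associated map: the summation map $\rho$ of \Cref{lem:subdivision-compatible}~(1) if the step is a subdivision, and the projection $\varphi$ of \Cref{lem:projection-compatible} (or the bridge-projection $(x, x_{\overline a}, z) \mapsto (x, z)$ discussed around \eqref{eq:split-augmentation-bridge}) if it is a free augmentation. By hypothesis $\psi = \psi_0 \circ \psi_1 \circ \dots \circ \psi_{k-1}$.

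The base case $k = 0$ is trivial, since then $\psi$ is the identity. For the inductive step, the single-step facts are already available: if $I_j \to I_{j+1}$ is a simple subdivision, then $\psi_j((\mathcal P^{(j+1)})_\mathrm{split}) = (\mathcal P^{(j)})_\mathrm{split}$ by \Cref{lem:subdivision-compatible}~(4); if it is a simple free augmentation, then $\psi_j((\mathcal P^{(j+1)})_\mathrm{split}) = (\mathcal P^{(j)})_\mathrm{split}$ by \Cref{thm:free-augmentation} applied to a sequence consisting of a single free augmentation, which amounts to \Cref{lem:fourier-motzkin} in the non-bridge case and to \eqref{eq:split-augmentation-bridge} in the bridge case. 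Applying this to the last step $I_{k-1} \to I_k$ gives $\psi_{k-1}(\mathcal P'_\mathrm{split}) = (\mathcal P^{(k-1)})_\mathrm{split}$, while the induction hypothesis applied to the shorter sequence $I_0, \dots, I_{k-1}$ gives $(\psi_0 \circ \dots \circ \psi_{k-2})((\mathcal P^{(k-1)})_\mathrm{split}) = \mathcal P_\mathrm{split}$. Composing,
$$ \psi(\mathcal P'_\mathrm{split}) = (\psi_0 \circ \dots \circ \psi_{k-2})\bigl(\psi_{k-1}(\mathcal P'_\mathrm{split})\bigr) = (\psi_0 \circ \dots \circ \psi_{k-2})\bigl((\mathcal P^{(k-1)})_\mathrm{split}\bigr) = \mathcal P_\mathrm{split},$$
which completes the induction.

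Two minor points deserve attention. Each $\psi_j$ is mixed-integer-compatible (being $\rho$, $\varphi$, or the bridge-projection, all affine and integrality-preserving in the $z$-coordinates), so each restriction $\psi_j|_{(\mathcal P^{(j+1)})_\mathrm{split}}$ lands in $(\mathcal P^{(j)})_\mathrm{split}$ by \Cref{thm:split-descendance}; hence the composition of restrictions is well-defined, $\psi$ itself is mixed-integer-compatible, and $\psi(\mathcal P'_\mathrm{split}) \subseteq \mathcal P_\mathrm{split}$ holds a priori, with the displayed computation upgrading this to an equality. Moreover, one should confirm that the consistently transported cycle bases make the maps $\psi_j$ compose as claimed; by \Cref{rem:basis-indep} any other consistent choice differs only by a mixed-integer isomorphism, so the statement does not depend on it.

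The main obstacle is not in this theorem but in the preparatory results it invokes: the real work is the Fourier--Motzkin redundancy analysis of \Cref{lem:fourier-motzkin} for free augmentations and the construction of the section $s$ in \Cref{lem:subdivision-compatible} witnessing surjectivity of $\rho$ on split closures. Granting these, what remains is a routine telescoping induction; the only thing to take care of is that subdivisions and free augmentations may be interleaved, so the induction should be organized over arbitrary single operations rather than trying to handle the free augmentations as one block via \Cref{thm:free-augmentation}.
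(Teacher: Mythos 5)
Your proposal is correct and takes the same route the paper intends: the paper gives no explicit proof, saying only that a repeated application of \Cref{lem:subdivision-compatible} together with \Cref{thm:free-augmentation} yields the result, and your telescoping induction over single operations is precisely the natural unfolding of that remark, with the one-step surjectivity on split closures supplied by \Cref{lem:subdivision-compatible}~(4) for subdivisions and by \Cref{lem:fourier-motzkin} (non-bridge) resp.\ \eqref{eq:split-augmentation-bridge} (bridge) for free augmentations. The side remarks on mixed-integer-compatibility of the composition, the bridge case, and basis-independence via \Cref{rem:basis-indep} are appropriate sanity checks that the paper leaves implicit.
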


In particular, when we binarize the MIP \eqref{eq:mip-cycle} by first performing simple subdivisions and then move to the formulation \eqref{eq:mip-incidence}, we gain no further insight about split inequalities.

\section{Computational Experiments}
\label{sec:experiments}

We want to assess how useful the split closure is for obtaining dual bounds for PESP in practice. To that end we introduce a procedure, which exploits \Cref{thm:sepa-fixed-cycle} in a heuristic way, and proceeds to find cuts  systematically once the heuristic fails, such that we optimize over the entire split closure by means of \Cref{thm:sepa-parametric-mip}. We will also examine the performance of the heuristic in comparison to the systematic exploration.

\subsection{Separation Procedure}

Our goal is to optimize over the entire split closure. We do so with our custom separator which proceeds as illustrated by the flowchart in \Cref{fig:flowchart}: At first, it tries to heuristically generate violated flip inequalities (highlighted in blue in the chart): We compute a minimum spanning tree with respect to the periodic slack $x - \ell$ of the current LP solution $(x, z) \in \mathcal P$, and determine a most violated flip inequality for the fundamental cycles of that tree by \Cref{thm:sepa-fixed-cycle}. When no more heuristic cuts are found, the parametric IP \eqref{eq:pmip} as in \Cref{thm:sepa-parametric-mip} is solved. During the solution process of the IP, a callback retrieves intermediate incumbent solutions and generates the corresponding cuts. The procedure terminates when no more violated cuts can be found, or the time limit is hit. Since the amount of cuts found by the heuristic is rather larger in the beginning, we apply the filtering mechanisms of SCIP to detect effective cuts. However, cuts found by the parametric IP will always be enforced, so that the whole procedure is correct up to numerical tolerances: If the procedure terminates because no more violated cuts can be detected, then the optimal solution over the split closure has been found.

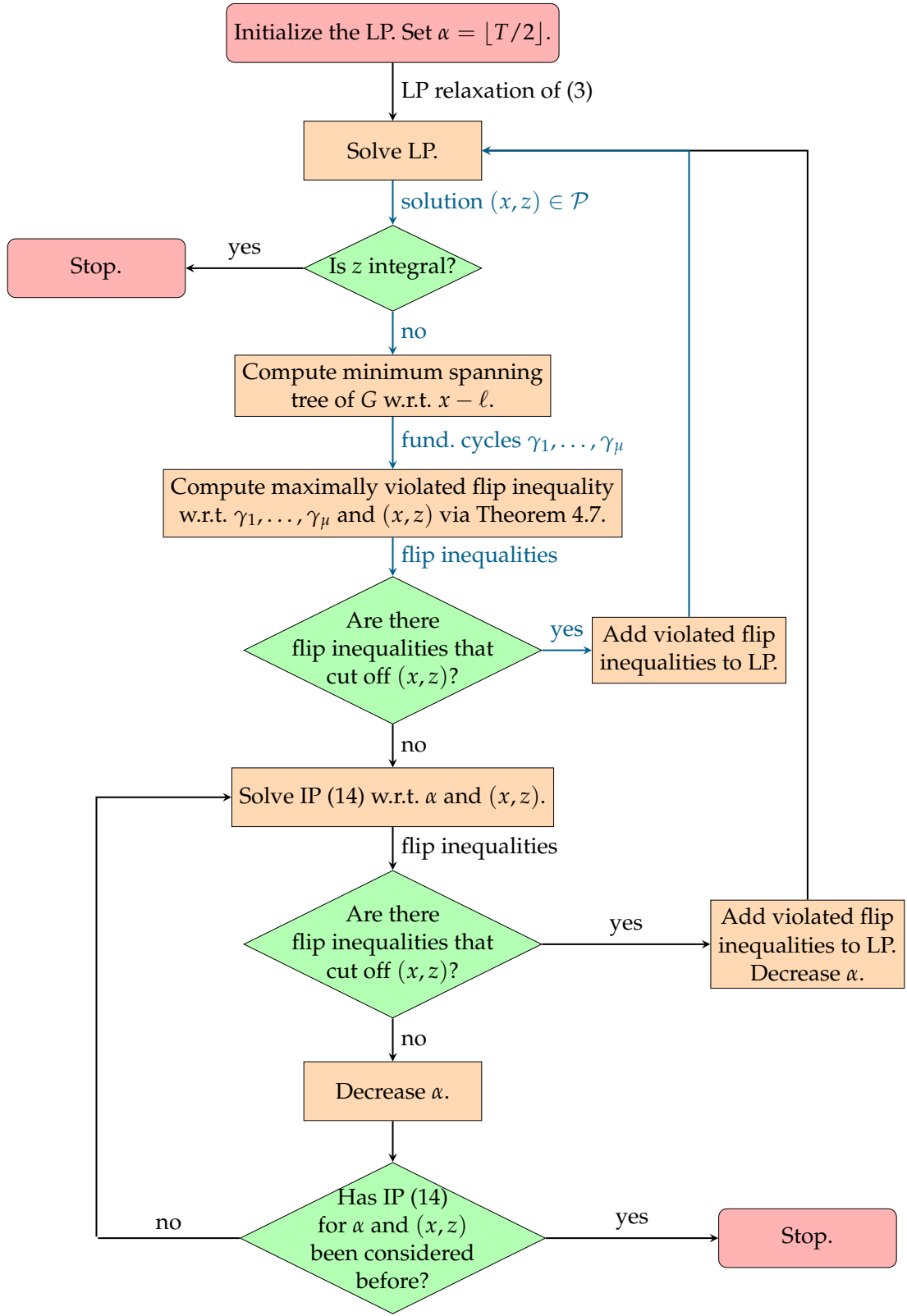
\begin{figure}[htbp]
    \centering
    \scalebox{0.95}{\begin{tikzpicture}[scale=1, node distance=2cm]
\definecolor{myblue}{cmyk}{.98, .15, 0, .47}
\tikzstyle{process} = [rectangle, minimum width=3cm, minimum height=1cm, text centered, draw=black, fill=orange!30, align=center]
\tikzstyle{decision} = [diamond, minimum width=3cm, minimum height=1cm, text centered, draw=black, fill=green!30, align=center, aspect=2, inner sep=-5]
\tikzstyle{arrow} = [thick,->,>=stealth]
\tikzstyle{heuristic} = [myblue]
\tikzstyle{startstop} = [rectangle, rounded corners, minimum width=3cm, minimum height=1cm,text centered, draw=black, fill=red!30, align=center]

\node (sta1) [startstop] {Initialize the LP. Set $\alpha = \lfloor T/2 \rfloor$.};
\node (pro1) [process, below of=sta1] {Solve LP.};
\node (dec1) [decision, below of=pro1, inner sep=0] {Is $z$ integral?};
\node (pro2) [process, below of=dec1] {Compute minimum spanning\\tree of $G$ w.r.t.\ $x - \ell$.};
\node (pro3) [process, below of=pro2] {Compute maximally violated flip inequality \\ w.r.t.\ $\gamma_1, \dots, \gamma_\mu$ and $(x, z)$ via \Cref{thm:sepa-fixed-cycle}.};
\node (dec2) [decision, below of=pro3, yshift=-0.5cm] {Are there\\flip inequalities that\\cut off $(x, z)$?};
\node (pro4) [process, right of=dec2, xshift=3cm] {Add violated flip\\inequalities to LP.};
\node (pro5) [process, below of=dec2, yshift=-0.5cm] {Solve IP \eqref{eq:pmip} w.r.t.\ $\alpha$ and $(x, z)$.};
\node (dec3) [decision, below of=pro5, yshift=-0.5cm] {Are there\\flip inequalities that\\cut off $(x, z)$?};
\node (pro6) [process, right of=dec3, xshift=5cm] {Add violated flip\\inequalities to LP.\\Decrease $\alpha$.};
\node (pro7) [process, below of=dec3, yshift=-0.5cm] {Decrease $\alpha$.};
\node (dec4) [decision, below of=pro7, yshift=-0.5cm] {Has IP \eqref{eq:pmip}\\for $\alpha$ and $(x, z)$ \\ been considered\\before?};
\node (sto1) [startstop, right of=dec4, xshift=5cm] {Stop.};
\node (sto2) [startstop, left of=dec1, xshift=-3cm] {Stop.};
\node (dum1) [left of=dec4, xshift=-3cm, inner sep=0] {};

\draw [arrow] (sta1) -- node[right] {LP relaxation of \eqref{eq:mip-cycle}} (pro1);
\draw [arrow, heuristic] (pro1) -- node[right] {solution $(x, z) \in \mathcal P$} (dec1);
\draw [arrow, heuristic] (dec1) -- node[right] {no} (pro2);
\draw [arrow, heuristic] (pro2) -- node[right, align=left] {fund.\ cycles $\gamma_1, \dots, \gamma_\mu$} (pro3);
\draw [arrow, heuristic] (pro3) --  node[right] {flip inequalities} (dec2);
\draw [arrow, heuristic] (dec2) --  node[above] {yes} (pro4);

\draw [arrow] (dec2) --  node[right] {no} (pro5);
\draw [arrow] (pro5) --  node[right] {flip inequalities} (dec3);
\draw [arrow] (dec3) --  node[above] {yes} (pro6);
\draw [arrow] (pro6) |-  (pro1);
\draw [arrow] (dec3) --  node[right] {no} (pro7);
\draw [arrow] (pro7) --  node[right] {} (dec4);
\draw [arrow] (dec4) -- node[above] {no}  (dum1) |-  (pro5);
\draw [arrow] (dec4) --  node[above] {yes} (sto1);
\draw [arrow] (dec1) --  node[above] {yes} (sto2);
\draw [arrow, heuristic] (pro4) |-  (pro1);	
\end{tikzpicture}}
    \caption{Flowchart of the split cut generation procedure. ``Decrease $\alpha$'' means to set $\alpha := \alpha - 1$ if $\alpha \geq 2$ and $\alpha := \lfloor T/2 \rfloor$ otherwise. We start with $\alpha = \lfloor T/2 \rfloor$, as this is likely to produce cuts with large violation.} 
    \label{fig:flowchart}
\end{figure}

\subsection{Methodology}
\label{sec:methodology}

To conduct our computational experiments, we use the benchmark library PESPlib \citep{PESPlib}, whose instances are derived from real-world scenarios. Although significant process has been made in the past, no instance could be solved to proven optimality up to date.

Since the PESPlib instances are computationally very hard, we consider not only the full instances, but also two subinstances per instance whose cyclomatic number $\mu$ has been restricted to 25, 
and 100, respectively. Note that $\mu$ is the number of integer variables in \eqref{eq:mip-cycle}. The restriction procedure for an instance $(G, T, \ell, u, w)$ works by iteratively removing arcs, deleting in each step one arc $a$ with highest span $u_a - \ell_a$ and breaking ties by preferring lowest weight $w_a$ (cf.\ \citealt{goerigk_improved_2017}). In contrast to the full PESPlib instances, these restricted variants can be solved to optimality within a reasonable amount of time.

We first preprocess each instance, so that in particular the assumptions as in \Cref{rem:preprocessing} hold. For each instance $I = (G, T, \ell, u, w)$, we consider the cycle-based formulation \eqref{eq:mip-cycle} using an integral cycle basis $B$ that minimizes $\sum_{\gamma \in B} \sum_{a \in \gamma} (u_a - \ell_a)$. This choice of cycle basis is motivated by its good performance for computing dual bounds  \citep{borndorfer_concurrent_2020,masing_forward_2023}. By \Cref{thm:free-augmentation}, the cycle-based formulation is not weaker than the incidence-based formulation, and by \Cref{rem:incidence-is-cycle}, it is more compact. We then invoke the branch-cut-and-price framework SCIP \citep{achterberg_scip_2009}. The advantage of using SCIP is that it is highly customizable and we can disable everything that does not come from split cuts: We disable the built-in presolving, branching, heuristics, propagators separators and merely call a custom separation callback during the cutting loop at the root node.

In all experiments, we use SCIP 8.0.3 \citep{bestuzheva_scip_2021} with Gurobi 9.5.2 \citep{gurobi} as LP solver. We also use Gurobi to solve the parametric IP from \Cref{thm:sepa-parametric-mip}. Gurobi is allowed to use 6 threads on an Intel Xeon E3-1270 v6 CPU running at 3.8 GHz with 32 GB RAM. The time limit has been set to 4 hours wall time for each instance.

\subsection{Results}
\label{sec:results}

\subsubsection{Restriction to $\mu = 25$}

\Cref{tab:results-mu-25} shows the results for the restrictions of the PESPlib instances to the cyclomatic number $\mu = 25$. For all but one instance, the cut generation procedure of \Cref{sec:methodology} terminates within 22 minutes, only R1L1v hits the time limit due to a hard parametric IP \eqref{eq:pmip}.  Optimizing over the split closure is exact for R4L1 and R4L4v, but R4L4v is trivial in the sense that $x = \ell$ is an optimal solution. The average relative optimality gap with respect to the optimal objective value in terms of weighted slack $w^\top(x-\ell)$ and the best bound obtained by split cuts, taken over all 22 instances, is 6.61\,\%.

\begin{table}[htbp]
\begin{center}
\begin{tabular}{lrrrrrrr}
Instance & $\mu$ & Opt.\ Val.\ ($\mathcal P_\mathrm{I}$) & Dual Bd.\ ($\mathcal P_\mathrm{split}$) & Gap [\%] & Cuts & IP Cuts & Time [s] \\
\hline
BL1   &     25 &     479\,501 &     455\,492 &  5.01 &     114 &  33 &      98 \\
BL2   &     25 &     582\,203 &     529\,247 &  9.10 &     128 &  32 &     116 \\
BL3   &     25 &     614\,544 &     513\,344 & 16.47 &     122 &  28 &     197 \\
BL4   &     25 &     581\,688 &     507\,168 & 12.81 &     176 &  65 &     106 \\
\hline
R1L1  &     25 &  1\,469\,763 &  1\,314\,105 & 10.59 &     284 & 123 &     747 \\
R1L2  &     25 &  1\,271\,066 &  1\,235\,774 &  2.78 &     226 &  96 &     857 \\
R1L3  &     25 &  1\,704\,349 &  1\,693\,441 &  0.64 &     238 & 114 &  1\,281 \\
R1L4  &     25 &  1\,543\,182 &  1\,429\,795 &  7.35 &     294 & 118 &     936 \\
\hline
R2L1  &     25 &  2\,598\,725 &  2\,171\,855 & 16.43 &     212 &  83 &     255 \\
R2L2  &     25 &  2\,726\,109 &  2\,471\,181 &  9.35 &     238 &  75 &     335 \\
R2L3  &     25 &  1\,698\,794 &  1\,661\,074 &  2.22 &     116 &  12 &      91 \\
R2L4  &     25 &  2\,417\,447 &  2\,325\,110 &  3.82 &     244 &  64 &     119 \\
\hline
R3L1  &     25 &  1\,110\,721 &  1\,055\,499 &  4.97 &     170 &  83 &     513 \\
R3L2  &     25 &  1\,283\,884 &  1\,148\,551 & 10.54 &     152 &  67 &     201 \\
R3L3  &     25 &  1\,617\,501 &  1\,478\,034 &  8.62 &     196 &  58 &     389 \\
R3L4  &     25 &  1\,063\,438 &     987\,067 &  7.18 &     143 &  56 &     399 \\
\hline
R4L1  &     25 &  1\,053\,623 &  1\,053\,623 &  0.00 &     102 &  14 &     205 \\
R4L2  &     25 &  1\,394\,526 &  1\,313\,700 &  5.80 &     136 &  39 &     231 \\
R4L3  &     25 &  1\,718\,591 &  1\,648\,388 &  4.08 &     148 &  47 &     213 \\
R4L4  &     25 &     498\,913 &     488\,043 &  2.18 &     171 &  60 &     701 \\
\hline
R1L1v &     25 &  1\,741\,592 &  1\,645\,779 &  5.50 &     128 &  58 & 14\,400 \\
R4L4v &     25 &  3\,660\,000 &  3\,660\,000 &  0.00 &       0 &   0 &       0 \\
\end{tabular}
\caption{Results for the PESPlib instances restricted to $\mu = 25$. The table lists the optimal objective value of the MIP \eqref{eq:mip-cycle} in terms of weighted slack $w^\top(x- \ell)$, the best dual bound obtained by split cuts, the primal-dual gap, the total number of applied split cuts, the number of cuts provided by the parametric IP \eqref{eq:pmip}, and the running time in seconds.}
\label{tab:results-mu-25}
\end{center}
\end{table}

\subsubsection{Restriction to $\mu = 100$}

The results for the restriction to $\mu = 100$ are summarized in \Cref{tab:results-mu-100}. Again, we can determine the optimal solution of \eqref{eq:mip-cycle} for all these restricted instances. The cut generation procedure of \Cref{sec:methodology} terminates within the time limit for 20 out of 22 instances. R4L4v is again almost trivial to solve, because two cuts suffice to produce an integral solution. The second smallest gap is at R1L1v, although the time limit is hit. The average optimality gap is 13.37\,\%, which is about twice as much as in the case $\mu = 25$.


\begin{table}[htbp]
\begin{center}
\begin{tabular}{lrrrrrrr}
Instance & $\mu$ & Opt.\ Val.\ ($\mathcal P_\mathrm{I}$) & Dual Bd.\ ($\mathcal P_\mathrm{split}$) & Gap [\%] & Cuts & IP Cuts & Time [s] \\
\hline
BL1   &    100 &  1\,341\,151 &  1\,216\,355 &  9.31 &  1\,092 & 357 &     954 \\
BL2   &    100 &  1\,733\,429 &  1\,451\,049 & 16.29 &     910 & 307 &  1\,287 \\
BL3   &    100 &  1\,747\,063 &  1\,461\,798 & 16.33 &     922 & 304 &  1\,864 \\
BL4   &    100 &  1\,605\,968 &  1\,427\,228 & 11.13 &     975 & 349 &  1\,399 \\
\hline
R1L1  &    100 &  5\,481\,154 &  4\,582\,018 & 16.40 &  1\,300 & 493 &  6\,903 \\
R1L2  &    100 &  4\,873\,559 &  3\,952\,695 & 18.90 &  1\,138 & 348 &  7\,453 \\
R1L3  &    100 &  6\,256\,521 &  5\,151\,095 & 17.67 &     998 & 324 &  4\,742 \\
R1L4  &    100 &  5\,008\,640 &  4\,202\,959 & 16.09 &  1\,407 & 415 &  6\,184 \\
\hline
R2L1  &    100 &  8\,284\,107 &  6\,881\,776 & 16.93 &  1\,021 & 294 &  2\,453 \\
R2L2  &    100 &  7\,099\,578 &  6\,244\,993 & 12.04 &  1\,366 & 406 &  4\,648 \\
R2L3  &    100 &  6\,722\,776 &  5\,982\,798 & 11.01 &  1\,102 & 342 &  6\,038 \\
R2L4  &    100 &  5\,516\,243 &  4\,996\,368 &  9.42 &  1\,217 & 317 &  3\,242 \\
\hline
R3L1  &    100 &  4\,366\,123 &  3\,770\,709 & 13.64 &     927 & 355 &  7\,180 \\
R3L2  &    100 &  4\,666\,798 &  3\,796\,483 & 18.65 &     764 & 253 &  8\,554 \\
R3L3  &    100 &  4\,719\,345 &  3\,890\,774 & 17.56 &     921 & 301 &  7\,492 \\
R3L4  &    100 &  2\,950\,612 &  2\,730\,898 &  7.45 &     885 & 348 & 11\,242 \\
\hline
R4L1  &    100 &  4\,428\,800 &  3\,715\,032 & 16.12 &     717 & 179 &  2\,947 \\
R4L2  &    100 &  4\,101\,438 &  3\,492\,759 & 14.84 &     789 & 236 & 14\,400 \\
R4L3  &    100 &  4\,302\,565 &  3\,740\,673 & 13.06 &     875 & 226 &  8\,785 \\
R4L4  &    100 &  1\,994\,572 &  1\,676\,547 & 15.94 &     607 & 184 &  7\,448 \\
\hline
R1L1v &    100 & 10\,253\,906 &  9\,715\,723 &  5.25 &     191 &   0 & 14\,400 \\
R4L4v &    100 & 14\,880\,000 & 14\,880\,000 &  0.00 &       2 &   0 &       1 \\
\end{tabular}
\caption{Results for the PESPlib instances restricted to $\mu = 100$. The table lists the optimal objective value of the MIP \eqref{eq:mip-cycle} in terms of weighted slack $w^\top(x- \ell)$, the best dual bound obtained by split cuts, the primal-dual gap, the total number of applied split cuts, the number of cuts provided by the parametric IP \eqref{eq:pmip}, and the running time in seconds.}
\label{tab:results-mu-100}
\end{center}
\end{table}


\subsubsection{Full instances}

Finally, the results for the full PESPlib instances are given in \Cref{tab:results-mu-full} in comparison to the best known primal bounds and in \Cref{tab:results-full-dual} in comparison to the best known dual bounds. All instances hit the time limit. Compared to the restricted instances, relatively few cuts are generated by the IP \eqref{eq:pmip}, which is both due to the large supply of heuristically generated cuts, and the difficulty of the IP. The time limit is not sufficient to unfold the power of the IP, on the other hand, increasing the time limit to 8 or 24 hours empirically produced only marginal improvements. This effect is also illustrated in \Cref{fig:R2L1-full-progress}: The plot shows an exemplary progression of the dual bound and the number of applied cuts for the instance R2L1 with a logarithmic time axis. The heuristic separation procedure finds no more cuts for the first time after roughly 45 minutes (about $10^{3.43}$ seconds), and then the parametric IP takes over, causing a sudden and persisting drop in performance. 
We can observe that once the parametric IP came into effect, the heuristic stage provides only few further cuts.
This could be due to the initial high quality results provided by the heuristic, such that the improvement through a cut from the parametric IP results in only a marginal change in the new solution. The subsequent spanning tree in the following heuristic stage could then be similar to the previous one, such that from this point on, only little to no improvement is found in the heuristic stage; and the costly parametric IP is the main contributor.

With respect to all instances, the average optimality gap is 40.83\,\%. As expected, the quality of the results obtained by our method is dependent on the problem size. In particular for the 16 R$i$L$j$ instances there is a strong correlation between the size of $\mu$ and the optimality gap. This is also evidenced by the Pearson correlation coefficient, which is approximately 95\,\%.

On the dual side, the split closure provides at least 91.10\,\% of the currently best known dual bound. This underlines the good performance of our method -- most of the incumbent dual bounds have been obtained by longer computation times, and in contrast to our study, neither branching nor other types of cutting planes apart from split cuts have been forbidden. 
Despite being at a disadvantage in this regard, our method provides better dual bounds for five out of the 22 instances, with improvements up to 25\,\%. 
Other bounds have been obtained with the help of heuristically separated flip inequalities as well, by, e.g., \cite{borndorfer_concurrent_2020,lindner_determining_2020,lindner_forward_2021,masing_forward_2023}, such that our procedure can be seen as an advancement of previous methods in the sense that our heuristic unlocks more potential due to exploiting \Cref{thm:sepa-fixed-cycle}.

\begin{table}[htbp]
\begin{center}
\begin{tabular}{lrrrrrrr}
Instance & $\mu$ & Primal Bd.\ ($\mathcal P_\mathrm{I}$) & Dual Bd.\ ($\mathcal P_\mathrm{split}$) & Gap [\%] & Cuts & IP Cuts & Time [s] \\
\hline
BL1   & 5\,298 &  6\,333\,641 &  4\,252\,778 & 32.85 & 42\,927 &  15 & 14\,400 \\
BL2   & 4\,880 &  6\,799\,331 &  4\,299\,517 & 36.77 & 37\,498 &  84 & 14\,400 \\
BL3   & 6\,265 &  6\,675\,098 &  4\,290\,946 & 35.72 & 58\,628 &  20 & 14\,400 \\
BL4   & 9\,684 &  6\,562\,147 &  3\,923\,974 & 40.20 & 88\,640 & 265 & 14\,400 \\
\hline
R1L1  & 2\,722 & 29\,894\,745 & 19\,041\,890 & 36.30 & 21\,965 &  60 & 14\,400 \\
R1L2  & 2\,876 & 30\,507\,180 & 19\,059\,669 & 37.52 & 23\,767 &  45 & 14\,400 \\
R1L3  & 2\,848 & 29\,319\,593 & 18\,193\,974 & 37.95 & 23\,468 &  61 & 14\,400 \\
R1L4  & 3\,769 & 26\,516\,727 & 16\,441\,121 & 38.00 & 30\,460 &  18 & 14\,400 \\
\hline
R2L1  & 3\,206 & 42\,422\,038 & 24\,806\,675 & 41.52 & 27\,739 & 163 & 14\,400 \\
R2L2  & 3\,360 & 40\,642\,186 & 24\,464\,467 & 39.81 & 28\,842 & 159 & 14\,400 \\
R2L3  & 3\,239 & 38\,558\,371 & 22\,645\,939 & 41.27 & 28\,816 &  95 & 14\,400 \\
R2L4  & 5\,514 & 32\,483\,894 & 19\,102\,410 & 41.19 & 47\,958 &   0 & 14\,400 \\
\hline
R3L1  & 4\,630 & 43\,271\,824 & 25\,343\,534 & 41.43 & 38\,725 &  17 & 14\,400 \\
R3L2  & 4\,800 & 45\,220\,083 & 25\,963\,773 & 42.58 & 41\,951 &  19 & 14\,400 \\
R3L3  & 5\,446 & 40\,483\,617 & 22\,273\,090 & 44.98 & 46\,099 &   6 & 14\,400 \\
R3L4  & 7\,478 & 33\,335\,852 & 17\,027\,192 & 48.92 & 46\,773 &   0 & 14\,400 \\
\hline
R4L1  & 5\,331 & 49\,426\,919 & 27\,938\,824 & 43.47 & 42\,505 &   6 & 14\,400 \\
R4L2  & 5\,688 & 48\,764\,793 & 27\,585\,028 & 43.43 & 45\,946 &   7 & 14\,400 \\
R4L3  & 6\,871 & 45\,493\,081 & 23\,849\,465 & 47.58 & 46\,277 &   0 & 14\,400 \\
R4L4  & 9\,371 & 36\,703\,391 & 16\,488\,684 & 55.08 & 42\,579 &   0 & 14\,400 \\
\hline
R1L1v & 2\,832 & 42\,591\,141 & 28\,544\,123 & 32.98 & 20\,326 &  22 & 14\,400 \\
R4L4v & 9\,637 & 61\,968\,380 & 38\,307\,814 & 38.18 & 45\,916 &   0 & 14\,400 \\
\end{tabular}
\caption{Results for the full PESPlib instances. The table lists the best known primal bound for the MIP \eqref{eq:mip-cycle} in terms of weighted slack $w^\top(x- \ell)$ according to \citep{PESPlib}, the best dual bound obtained by split cuts, the primal-dual gap, the total number of applied split cuts, the number of cuts provided by the parametric IP \eqref{eq:pmip}, and the running time in seconds.}
\label{tab:results-mu-full}
\end{center}
\end{table}

\begin{figure}
    \centering
    \includegraphics{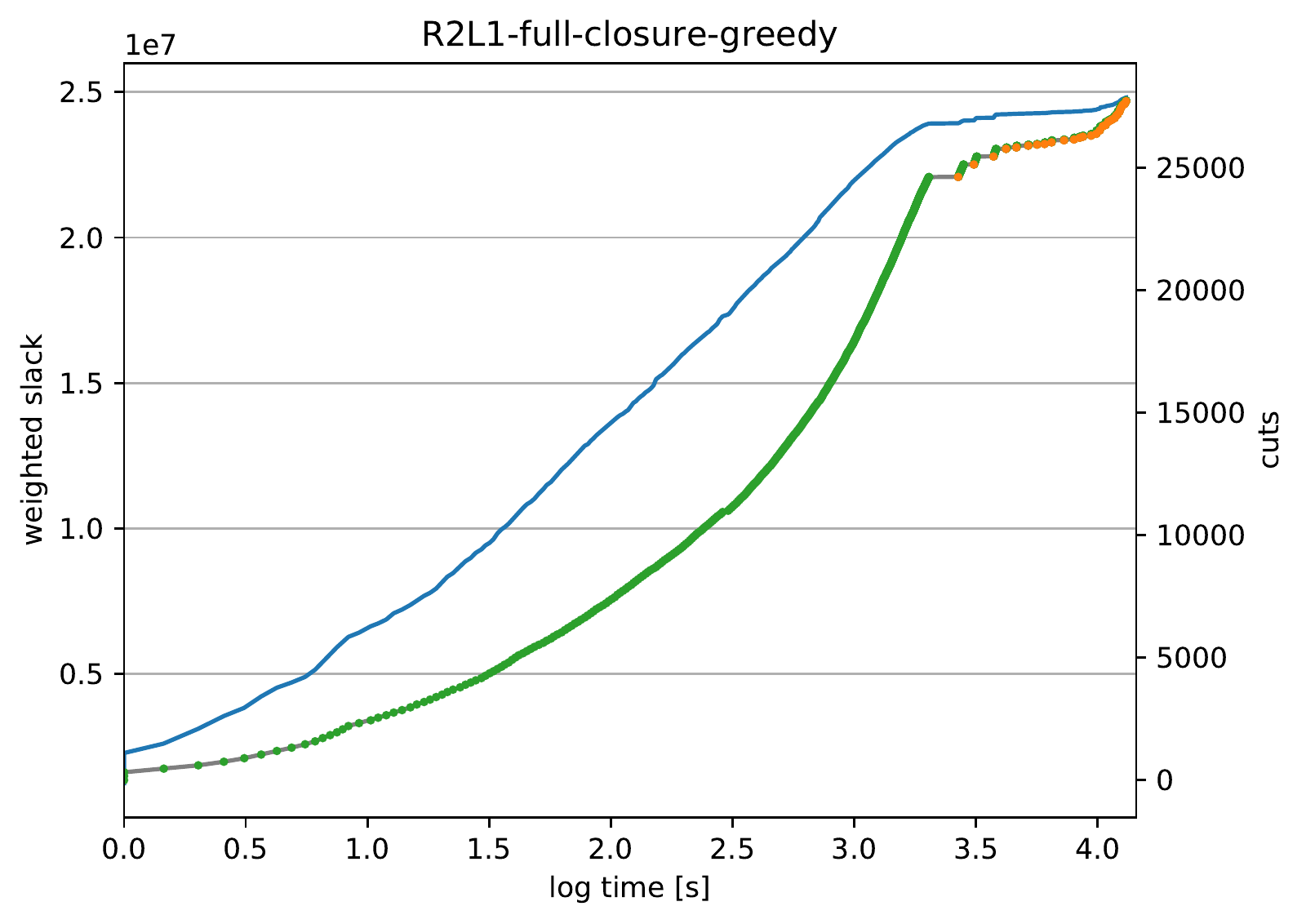}
    \caption{Evolution of the dual bound in terms of weighted slack $w^\top(x-\ell)$ (blue, left axis) and the number of applied split cuts (grey, right axis) for the instance R2L1. Green markers correspond to cuts obtained from the heuristic, orange to cuts from the parametric IP. The time axis is logarithmic.}
    \label{fig:R2L1-full-progress}
\end{figure}

\begin{table}[htbp]
\begin{center}
\begin{tabular}{lrrrrrrr}
Instance & $\mu$ & Dual Bd.\ ($\mathcal P_\mathrm{I}$) & Dual Bd.\ ($\mathcal P_\mathrm{split}$) & Gap [\%] & Dual Bd.\ Source\\
\hline
BL1   & 5\,298 & 3\,668\,148  &  4\,252\,778 & $-15.94$ & \cite{borndorfer_concurrent_2020} \\
BL2   & 4\,880 & 3\,943\,811  &  4\,299\,517 & $-9.02$ & \cite{borndorfer_concurrent_2020} \\
BL3   & 6\,265 & 3\,571\,976  &  4\,290\,946 & $-20.13$ & \cite{borndorfer_concurrent_2020} \\
BL4   & 9\,684 & 3\,131\,491  &  3\,923\,974 & $-25.31$  & \cite{borndorfer_concurrent_2020} \\
\hline
R1L1  & 2\,722 & 20\,901\,883  & 19\,041\,890 & 8.90 & \cite{lindner_forward_2021} \\
R1L2  & 2\,876 & 19\,886\,799  & 19\,059\,669 & 4.16 & \cite{masing_forward_2023} \\
R1L3  & 2\,848 & 19\,323\,821  & 18\,193\,974 & 5.85 & \cite{masing_forward_2023} \\
R1L4  & 3\,769 & 17\,283\,850  & 16\,441\,121 & 4.88 & \cite{masing_forward_2023}\\
\hline
R2L1  & 3\,206 & 25\,929\,643  & 24\,806\,675 & 4.33 & \cite{masing_forward_2023}\\
R2L2  & 3\,360 & 25\,642\,692  & 24\,464\,467 & 4.59 & \cite{masing_forward_2023}\\
R2L3  & 3\,239 & 23\,941\,492  & 22\,645\,939 & 5.41 & \cite{masing_forward_2023}\\
R2L4  & 5\,514 & 19\,793\,447  & 19\,102\,410 & 3.49 & \cite{masing_forward_2023}\\
\hline
R3L1  & 4\,630 & 26\,825\,864  & 25\,343\,534 & 5.53 & \cite{masing_forward_2023}\\
R3L2  & 4\,800 & 27\,178\,406  & 25\,963\,773 & 4.47 & \cite{masing_forward_2023}\\
R3L3  & 5\,446 & 23\,007\,043  & 22\,273\,090 & 3.19 & \cite{masing_forward_2023}\\
R3L4  & 7\,478 & 17\,432\,725  & 17\,027\,192 & 2.33 & \cite{masing_forward_2023}\\
\hline
R4L1  & 5\,331 & 29\,174\,444  & 27\,938\,824 & 4.24 & \cite{masing_forward_2023}\\
R4L2  & 5\,688 & 28\,664\,399  & 27\,585\,028 & 3.77 & \cite{masing_forward_2023}\\
R4L3  & 6\,871 & 24\,293\,621  & 23\,849\,465 & 1.83 & \cite{masing_forward_2023} \\
R4L4  & 9\,371 & 17\,961\,400  & 16\,488\,684 & 8.20 & \cite{lindner_determining_2020} \\
\hline
R1L1v & 2\,832 & 29\,620\,775 & 28\,544\,123 & 3.63 & \cite{PESPlib}\\
R4L4v & 9\,637 & 32\,296\,041 & 38\,307\,814 & $-18.61$ & \cite{PESPlib}\\
\end{tabular}
\caption{Comparison of dual bounds for the full PESPlib instances. The table lists the best known dual bound for the MIP \eqref{eq:mip-cycle} in terms of weighted slack $w^\top(x- \ell)$ according to the source in the last column, the best dual bound obtained by split cuts, and the primal-dual gap.}
\label{tab:results-full-dual}
\end{center}
\end{table}


\subsection{Insights}

From our experiments we have gained two main insights: 
On one hand, we have seen that our procedure is indeed useful in computing qualitative dual bounds, as we were able to improve five instances of the benchmarking library PESPlib significantly. But also for the other instances, some of which have been treated excessively in the past, a high percentage of the bound could be reached in comparably little time by our procedure. 

On the other hand, our tests help us to assess the quality of the split closure for computing the lower bounds independent of the procedure chosen: The instances where the optimal solution could be obtained and our procedure terminates give an indication of how well suited the split closure is for dual bounds in the context of PESP. Here, we were able to observe that the split closure provided fairly low optimality gaps on average, and even certified optimality in three cases, a non-negligible gap remains: E.g., in the worst case, namely for R1L2 with $\mu = 100$, there is a gap of 18.9\, \% between the optimal dual bound of the split closure and the optimal solution. We reach the conclusion that the split closure is essential for raising the dual bound. However, in order to close the primal-gap entirely, further methods, e.g., higher rank split cuts, will have to be applied.

Considering that usually $\mathcal P_{I} \subsetneq \mathcal P_{\mathrm{split}}$, such that any bound obtained from the split closure will not be sufficient to prove optimality, one could ask the question, whether it is worth it to explore it to its full extent, or whether the fast, heuristic section of our procedure would be sufficient. For an indication, we analyzed the instances where our procedure terminated before the time limit was reached:
We found that the best bound before the parametric IP came into effect reached at least 89.1\,\%, and on average even 95.6\,\% of the final dual bound. 
We conclude that indeed the heuristic approach of separating flip inequalities is quite effective, as it is able to cover the majority of the dual bound that can be obtained from the split closure quickly. In our case, the addition of the parametric IP in the procedure was essential for the assessment of the split closure and might be helpful to find new cuts, so that the heuristic can produce effective cuts again. However for practical purposes, particularly when other methods aimed at improving the dual bounds are used in parallel, the time-consuming parametric IP might be too costly. The heuristic part could be sufficient, particularly in light of the realization that -- also in practice -- the split closure is not enough to close the dual gap entirely.


\section{Conclusion}
\label{sec:conclusion}

We have shown that in the context of periodic timetabling, the split closure can be expressed in combinatorial terms, namely via flip inequalities with respect to simple cycles. Consequently, this means that a dual bound obtained from flip inequalities is as good as from split cuts. However, flip inequalities are -- in a way -- easier to grasp: We show that for a fixed cycle, a separating flip inequality can be found in linear time. This can be used to obtain a heuristic, which turned out to be powerful in practice. In combination with a systematic exploration of violated flip inequalities, we were able to improve the dual bounds of five instances of the benchmark library PESPlib -- proving both the effectiveness of our approach, but also of the benefit of the split closure in the context of PESP. 
One of our main contributions is also in the insight that the split closures of various equivalent PESP formulations are all equivalent as well, meaning that neither the specific MIP formulation, nor any amount of subdivision or augmentation will lead to a stronger split closure. 

Our computational experiments also indicate that even with a full exploration of the flip polytope, a certain gap will remain. To close the primal-dual gap entirely, further research into stronger cuts is needed, which will have to be different from first-order split cuts. 
\bibliography{references}

\end{document}